\theoremstyle{plain}
\newtheorem{thm}{Theorem}[section]
\newtheorem{prop}[thm]{Proposition}
\newtheorem{lem}[thm]{Lemma}
\newtheorem{cor}[thm]{Corollary}
\newtheorem{conj}[thm]{Conjecture}
\newtheorem{fact}[thm]{Fact}
\theoremstyle{definition}
\newtheorem{defn}[thm]{Definition}
\newtheorem{eg}[thm]{Example}
\theoremstyle{remark}
\newtheorem{rem}[thm]{Remark}
\newtheorem{claim}[thm]{Claim}
\DeclareMathOperator{\Cl}{Cl}
\DeclareMathOperator{\codim}{codim}
\DeclareMathOperator{\Def}{Def}
\DeclareMathOperator{\Exc }{Exc}
\DeclareMathOperator{\Ext}{Ext}
\DeclareMathOperator{\Hom}{Hom}
\DeclareMathOperator{\Pic}{Pic}
\DeclareMathOperator{\Sing}{Sing}
\DeclareMathOperator{\Spec}{Spec}
\DeclareMathOperator{\depth}{depth}
\DeclareMathOperator{\Ker}{Ker}
\DeclareMathOperator{\Image}{Im}
\DeclareMathOperator{\Gal}{Gal}
\begin{document}

\title
[Deformations of $\mathbb{Q}$-Fano 3-folds ]
{On deformations of $\mathbb{Q}$-Fano threefolds}
\subjclass[2010]{Primary 14D15, 14J45; Secondary 14B07}
\keywords{deformation theory, $\mathbb{Q}$-Fano 3-folds, $\mathbb{Q}$-smoothings, general elephants}

\author{Taro Sano}
\address{Mathematics Institute, Zeeman Building, University of Warwick, Coventry, CV4 7AL, UK}
\email{T.Sano@warwick.ac.uk}

\address{Max Planck Institute for Mathematics, Vivatsgasse 7, 53111 Bonn, Germany}
\email{tarosano222@gmail.com}

\dedicatory{Dedicated to Professor~Yujiro~Kawamata on the~occasion of his~60th~birthday.}

\maketitle

\begin{abstract}
We study the deformation theory of a $\mathbb{Q}$-Fano 3-fold with only terminal singularities. First, we show that 
the Kuranishi space of a $\mathbb{Q}$-Fano 3-fold is smooth. Second, we show that 
 every $\mathbb{Q}$-Fano 3-fold with only ``ordinary'' terminal singularities is $\mathbb{Q}$-smoothable, 
 that is, it can be deformed to a $\mathbb{Q}$-Fano 3-fold with only quotient singularities. 
Finally, we prove $\mathbb{Q}$-smoothability of a $\mathbb{Q}$-Fano 3-fold  assuming the existence of a Du Val anticanonical element. 
As an application, we get the genus bound for primary $\mathbb{Q}$-Fano 3-folds with Du Val anticanonical elements. 
\end{abstract}

\tableofcontents

 \section{Introduction}
All algebraic varieties in this paper are defined over $\mathbb{C}$.

\subsection{Background and our results}
\begin{defn}
Let $X$ be a normal projective variety. 
We say that $X$ is {\it a $\mathbb{Q}$-Fano 3-fold} if $\dim X =3$, $X$ has only terminal singularities and $-K_X$ is an ample $\mathbb{Q}$-Cartier divisor. 
\end{defn}

$\mathbb{Q}$-Fano $3$-folds are important objects in the classification of algebraic varieties. 
Toward the classification of $\mathbb{Q}$-Fano 3-folds, it is fundamental to study their deformations. 
 
\begin{defn}
Let $X$ be an algebraic variety and $\Delta^1$ an open unit disc of dimension $1$. A {\it $\mathbb{Q}$-smoothing} of $X$ is 
a flat morphism of complex analytic spaces $f \colon \mathcal{X} \rightarrow \Delta^1$ such that $f^{-1}(0) \simeq X$ and $f^{-1}(t)$ has only quotient singularities of 
codimension at least $3$. 

If $X$ is proper, we assume that $f$ is a proper morphism. 
\end{defn}

\begin{rem}
Schlessinger \cite{Schl} proved that an isolated quotient singularity of dimension $\ge 3$ is infinitesimally rigid under small deformations. 
\end{rem}

Reid (\cite{pagoda}, \cite{YPG}) and Mori \cite{mori} showed that 
a $3$-fold terminal singularity can be written as a quotient of 
an isolated cDV hypersurface singularity by a finite cyclic group action and it admits a $\mathbb{Q}$-smoothing. 

In general, a local deformation may not lift to a global deformation. 
However, Alt{\i}nok--Brown--Reid conjectured the following in \cite[4.8.3]{ABR}. 

\begin{conj}\label{qsmqfanoconj}
 Let $X$ be a $\mathbb{Q}$-Fano $3$-fold. Then $X$ has a $\mathbb{Q}$-smoothing.   
 \end{conj}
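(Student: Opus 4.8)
\medskip
\noindent\textbf{A strategy for Conjecture~\ref{qsmqfanoconj}.}
The plan is to assemble a global $\mathbb{Q}$-smoothing out of the local ones by a globalisation argument, and then to integrate the resulting infinitesimal deformation using smoothness of the Kuranishi space. By the results of Reid and Mori recalled above, each terminal point $p_i \in \Sing X$ admits a $\mathbb{Q}$-smoothing, which infinitesimally is a first-order deformation of the germ $(X, p_i)$ whose general fibre has only quotient singularities of codimension $\ge 3$; since the first cotangent sheaf $\mathcal{T}^1_X$ is supported on the finite set $\Sing X$ and $H^0(X, \mathcal{T}^1_X) \cong \prod_i T^1_{(X, p_i)}$, these local directions give a section $\sigma \in H^0(X, \mathcal{T}^1_X)$. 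The first step is to lift $\sigma$ to a genuine first-order deformation of $X$. This is controlled by the low-degree exact sequence of the local-to-global spectral sequence for cotangent cohomology,
\[
0 \to H^1(X, \mathcal{T}_X) \to T^1_X \to H^0(X, \mathcal{T}^1_X) \xrightarrow{\ \mathrm{ob}\ } H^2(X, \mathcal{T}_X),
\]
where $\mathcal{T}_X$ is the tangent sheaf of $X$: if the obstruction $\mathrm{ob}(\sigma)$ vanishes, then $\sigma$ lifts to a class in $T^1_X$, i.e. to a first-order deformation of $X$ inducing the chosen $\mathbb{Q}$-smoothing at every $p_i$.

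Granting this, one integrates as follows. By the first main theorem of the paper the Kuranishi space $\Def(X)$ is smooth; each local deformation space $\Def(X, p_i)$ is smooth as well, and vanishing of the obstruction map $\mathrm{ob}$ makes the natural forgetful map $\Def(X) \to \prod_i \Def(X, p_i)$ surjective on tangent spaces, hence a smooth surjective morphism of smooth germs. Inside each $\Def(X, p_i)$ the locus parametrising quotient singularities is dense and open, so its preimage $U \subseteq \Def(X)$ is dense and open. Pulling back the versal family over a general analytic arc $\Delta^1 \hookrightarrow \Def(X)$ through $0$, the arc meets $U$; by openness of ampleness and of the terminal condition the general fibre is again a $\mathbb{Q}$-Fano $3$-fold, and by construction its only singularities are quotient ones of codimension $\ge 3$ (these being infinitesimally rigid by Schlessinger's result, so no further degeneration intervenes). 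This produces the desired $\mathbb{Q}$-smoothing $f \colon \mathcal{X} \to \Delta^1$.

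The crux is the vanishing of $\mathrm{ob}(\sigma) \in H^2(X, \mathcal{T}_X)$, and for an arbitrary $\mathbb{Q}$-Fano $3$-fold this group is hard to control, which is why the conjecture in full generality remains open. Under extra hypotheses, however, it can be handled. When the singularities of $X$ are ``ordinary'', one analyses $\mathcal{T}^1_X$ and the local obstruction maps explicitly and checks directly, by a computation compatible with the global exact sequence, that $\sigma$ is unobstructed. When $X$ carries a Du Val anticanonical element $S \in |-K_X|$, one restricts to $S$ instead: from $\mathcal{O}_X(-S) \cong \omega_X$ one has $\mathcal{T}_X(-S) \cong \mathcal{T}_X \otimes \omega_X$, and Serre duality together with Kawamata--Viehweg vanishing and the rational connectedness of $X$ (which kills $H^0$ of the reflexive differentials) yields enough vanishing to reduce the globalisation of $\sigma$ on $X$ to the analogous problem on the Du Val K3 surface $S$, where deformations are unobstructed and the singularities are simultaneously smoothable. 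Feeding the resulting lift back into the integration step above gives $\mathbb{Q}$-smoothability in these cases, and comparing with the classification of quotient-singular $\mathbb{Q}$-Fano $3$-folds then produces the genus bound for primary ones.
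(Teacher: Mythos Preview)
Your globalisation step has a genuine gap that the paper explicitly flags. You propose to lift the local $\mathbb{Q}$-smoothing directions $\sigma\in H^0(X,\mathcal{T}^1_X)\simeq\prod_i T^1_{(X,p_i)}$ via the edge map of the local-to-global spectral sequence, the obstruction lying in $H^2(X,\mathcal{T}_X)$. But the paper points out (Remark before the proof of Theorem~\ref{qsmqfano}) that the restriction $T^1_X\to\prod_i T^1_{U_i}$ is \emph{not} surjective in general: Namikawa's example \cite[Example~5]{NamFano} is a Gorenstein (hence ordinary!) Fano $3$-fold with $H^2(X,\Theta_X)\neq 0$. So even in the ordinary case your proposed ``direct computation that $\sigma$ is unobstructed'' cannot go through, and the forgetful map $\Def(X)\to\prod_i\Def(U_i,p_i)$ need not be surjective on tangent spaces.

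The paper sidesteps this by \emph{not} attempting to lift $\eta_i\in T^1_{U_i}$ itself. Instead it passes to an auxiliary resolution $\tilde X$ built from a cyclic cover branched along a general $D_m\in|{-}mK_X|$, and replaces $\eta_i$ by its image $\phi_i(\eta_i)$ under a coboundary map into a local cohomology group $H^2_{E_i}(\tilde U_i,\mathcal{F}^{(0)}_i)$. The relevant obstruction is then $H^2(\tilde X,\mathcal{F}^{(0)})$, which \emph{does} vanish by the Guill\'en--Navarro Aznar--Puerta--Steenbrink theorem. This produces an $\eta\in T^1_X$ with $p_{U_i}(\eta)\notin\Ker\phi_i$; the ordinariness hypothesis enters only to guarantee $\phi_{U_i}\neq 0$ (Lemma~\ref{ordinarynonzero}), so that such an $\eta$ genuinely moves each singularity. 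One never recovers the original $\eta_i$, only something outside $\Ker\phi_i$, and that suffices via the stratification argument. Your sketch for the Du Val case is likewise off: the paper works with the deformation functor of the \emph{pair} $(X,D)$, constructs a special resolution with crepant strict transform (Proposition~\ref{essresolprop}), and proves the needed vanishing via a Lefschetz statement for class groups (Proposition~\ref{lefprop}, Claim~\ref{comp0}), not via Kawamata--Viehweg on $\mathcal{T}_X\otimes\omega_X$.
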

 
The following theorem is an answer to their question in the ``ordinary'' case. 

\begin{thm}\label{qsmqfanointro}{\rm (= Corollary \ref{qsmordqfano})}
Let $X$ be a $\mathbb{Q}$-Fano $3$-fold with only ordinary terminal singularities (See Remark \ref{ordrem}). Then $X$ has a $\mathbb{Q}$-smoothing. 
\end{thm}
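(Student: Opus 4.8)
The plan is to reduce the global $\mathbb{Q}$-smoothing problem to a local one together with an unobstructedness statement for the global deformation functor. Since $X$ has only terminal singularities, the singular locus $\Sing X$ consists of finitely many points, each of which is an ordinary terminal singularity. First I would recall that by Reid and Mori's structure theory each such point admits a local $\mathbb{Q}$-smoothing, i.e. the miniversal deformation of the singularity germ has a one-parameter subfamily whose general fibre is a quotient singularity of codimension $\ge 3$. Let $T^1_X := \Ext^1(\Omega_X, \mathcal{O}_X)$ and $\mathcal{T}^1_X := \mathcal{E}xt^1(\Omega_X, \mathcal{O}_X)$; the latter is a sheaf supported on the finite set $\Sing X$, and by the local-to-global spectral sequence for $\Ext$ there is an exact sequence $H^1(X, \mathcal{T}_X) \to T^1_X \to H^0(X, \mathcal{T}^1_X) \to H^2(X, \mathcal{T}_X)$, where $\mathcal{T}_X = \mathcal{H}om(\Omega_X, \mathcal{O}_X)$.

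The second step is to show that the map $T^1_X \to H^0(X, \mathcal{T}^1_X)$ is surjective; this would mean every first-order local deformation, in particular the one giving the local $\mathbb{Q}$-smoothing at each singular point, is induced by a global first-order deformation. By the exact sequence above it suffices to show $H^2(X, \mathcal{T}_X) = 0$, or at least that the obstruction classes land in zero. Here I would invoke Kawamata--Viehweg-type vanishing: writing $\mathcal{T}_X \cong \Omega_X^{[2]} \otimes \omega_X^{[-1]}$ (reflexive hull considerations, valid in codimension $\ge 2$ since $X$ is terminal hence smooth in codimension $2$), one can try to apply a vanishing theorem for the ample divisor $-K_X$. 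Then the third step is the key input already granted in the excerpt: the Kuranishi space of a $\mathbb{Q}$-Fano $3$-fold is smooth (this is the first main theorem of the paper, proved earlier). Smoothness of the Kuranishi space means the global deformation functor $\Def_X$ is unobstructed, so any first-order deformation extends to a deformation over a smooth base; in particular the first-order deformation lifting the local $\mathbb{Q}$-smoothings extends to an actual family $\mathcal{X} \to \Delta^1$.

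The final step is to check that the general fibre of this family really has only quotient singularities of codimension $\ge 3$, i.e. that the deformation we built actually performs the $\mathbb{Q}$-smoothing and does not merely deform $X$ trivially near the singular points. This follows because the first-order deformation we chose maps, under $T^1_X \to H^0(X, \mathcal{T}^1_X) = \bigoplus_{p \in \Sing X} T^1_{X,p}$, to the tangent vector of the local $\mathbb{Q}$-smoothing at each $p$; by openness of the locus of quotient singularities (Schlessinger rigidity, quoted in the excerpt) and the versality of the local deformation spaces, the general fibre near each $p$ acquires only quotient singularities, while away from $\Sing X$ the fibres stay smooth for $t$ small. Properness is automatic since $X$ is proper and we may take $f$ proper. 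I expect the main obstacle to be the surjectivity of $T^1_X \to H^0(X, \mathcal{T}^1_X)$, equivalently the relevant $H^2$-vanishing: making the identification of $\mathcal{T}_X$ with a twist of a reflexive differential sheaf precise on the terminal (not smooth) threefold $X$, and arranging the hypotheses of the vanishing theorem — here the "ordinary" hypothesis on the singularities should be what makes the local cohomology of $\mathcal{T}^1_X$ and the obstruction computation tractable.
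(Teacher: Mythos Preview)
Your overall architecture (local $\mathbb{Q}$-smoothings + unobstructedness + lifting a good first-order class) is the right one, and you correctly flag the surjectivity of $T^1_X \to H^0(X,\mathcal{T}^1_X)$ as the crux. The gap is that this surjectivity does \emph{not} follow from $H^2(X,\mathcal{T}_X)=0$, because that vanishing is simply false in general: the paper points out (Remark before the proof of Theorem~\ref{qsmqfano}) that there are $\mathbb{Q}$-Fano $3$-folds with $H^2(X,\Theta_X)\neq 0$ (Namikawa's example). So the restriction $T^1_X\to \bigoplus_i T^1_{U_i}$ can fail to be surjective, and your Step~2 cannot be salvaged by Kodaira--Kawamata--Viehweg type vanishing on $X$. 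Moreover, the ``ordinary'' hypothesis does not enter into any such global $H^2$-vanishing; it plays a different role entirely.

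What the paper does instead is avoid lifting the full local class $\eta_i\in T^1_{U_i}$ and lift only its image under a coboundary map
\[
\phi_i\colon T^1_{U_i}\simeq H^1(U_i',\Omega^2_{U_i'}\otimes\omega_{U_i'}^{-1})\longrightarrow H^2_{E_i}(\tilde{U}_i,\mathcal{F}_i^{(0)}),
\]
built from a $\mathbb{Z}_m$-equivariant resolution of a cyclic cover $Y\to X$ branched along a smooth member of $|{-}mK_X|$. The relevant lifting map $\oplus\psi_i$ in diagram~(\ref{minadiag}) is surjective because one can kill $H^2(\tilde{X},\mathcal{F}^{(0)})$ via the Guill\'en--Navarro Aznar--Puerta--Steenbrink vanishing on the smooth resolution $\tilde{Y}$, not on $X$. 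One then shows (Claim~\ref{nones}) that any lift $\eta\in T^1_X$ with $\psi_i(\eta)=\phi_i(\eta_i)$ restricts to a local class \emph{outside} $\Ker\phi_i$, hence outside the image of the blow-down map from deformations of the resolution; this forces the induced local deformation to leave the deepest stratum of the Kuranishi space of the index-one cover. The ``ordinary'' hypothesis is precisely Lemma~\ref{ordinarynonzero}: it guarantees $\phi_i\neq 0$, so that the argument can be iterated until only quotient singularities remain. In short, replace your attempted $H^2(X,\Theta_X)=0$ by the cyclic-cover diagram~(\ref{minadiag}) and use ordinariness to get $\phi_i\neq 0$, not to get a global vanishing.
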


We prove a more general statement in Theorem \ref{qsmqfano} that implies Theorem \ref{qsmqfanointro}.  
 
\begin{rem}\label{ordrem}
 A 3-dimensional terminal singularity is called {\it ordinary} if the defining equation of its index 1 cover 
is $\mathbb{Z}_r$-invariant, where 
$\mathbb{Z}_r$ is the Galois group of the cover. In the list of 3-dimensional terminal singularities, 
there are 5 families of ordinary singularities and 1 {\it exceptional}  
 family of Gorenstein index 4 (See \cite[Theorem 12 (2)]{mori}  or \cite[(6.1) Figure (2)]{YPG} ). 
\end{rem}

Previously, Namikawa \cite{NamFano} proved that a Fano 3-fold with only terminal Gorenstein singularities admits a smoothing, that is, 
it can be deformed to a smooth Fano 3-fold. 
Minagawa \cite{mina} proved $\mathbb{Q}$-smoothability of a $\mathbb{Q}$-Fano 3-fold of Fano index one, that is, it has a global index one cover. 
Takagi also treated some cases in \cite[Theorem 2.1]{takagi2}. Note that the singularities on a $\mathbb{Q}$-Fano 3-fold in their cases are ordinary.  

In order to prove the $\mathbb{Q}$-smoothablity, we need the following theorem on the unobstructedness of deformations of a  $\mathbb{Q}$-Fano 3-fold. 

\begin{thm}\label{qfanounobsintro}{\rm (= Theorem \ref{unobsqfano})}
Let $X$ be a $\mathbb{Q}$-Fano $3$-fold. Then the deformations of $X$ are unobstructed. 
\end{thm}

Namikawa \cite{NamFano} proved the unobstructedness in the Gorenstein case and Minagawa \cite{mina} proved it 
for a $\mathbb{Q}$-Fano 3-fold of Fano index one. 
We show it for any $\mathbb{Q}$-Fano 3-fold. 
This theorem reduces the problem of finding good deformations to that of 1st order infinitesimal deformations. 
 
\vspace{5mm} 
 
Another fundamental problem in the classification of $\mathbb{Q}$-Fano 3-folds is to find anticanonical elements with only mild singularities.    
An anticanonical element is called an {\it elephant}. 
A Gorenstein Fano 3-fold with only canonical singularities has an elephant with only Du Val singularities(\cite{Shokurov}, \cite{ReidKawamata}). 
By using this fact, Mukai classified ``indecomposable'' Gorenstein Fano 3-folds with canonical singularities in \cite{mukai}. 
Hence the existence of a Du Val elephant is useful in the classification.  
However a $\mathbb{Q}$-Fano 3-fold may not have such a good element in general. 
There exist examples of $\mathbb{Q}$-Fano 3-folds with empty anticanonical linear systems 
or with only non Du Val elephants as in \cite[4.8.3]{ABR}. 
Nevertheless, Alt{\i}nok--Brown--Reid \cite{ABR} conjectured the following. 

\begin{conj}\label{simulqsmconj}
Let $X$ be a $\mathbb{Q}$-Fano $3$-fold. Assume that $|{-} K_X|$ contains an element $D$. 
\begin{enumerate}
\item Then there exists a deformation $f \colon \mathcal{X} \rightarrow \Delta^1$ of $X$ such that
 $|{-} K_{\mathcal{X}_t}|$ contains an element $D_t$ with only Du Val singularities for general $t \in \Delta^1$. 
\item Moreover, a divisor $D_t \subset \mathcal{X}_t$ is locally isomorphic to $\frac{1}{r}(a,r-a) \subset \frac{1}{r}(1,a,r-a)$, where both sides are corresponding 
cyclic quotient singularities for some coprime integers $r$ and $a$ 
around  each Du Val singularities of $D_t$.  
\end{enumerate}
\end{conj}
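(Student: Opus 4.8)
The plan is to prove both parts at once by deforming the \emph{pair} $(X,D)$ rather than $X$ alone, arranging a single family $f\colon(\mathcal{X},\mathcal{D})\to\Delta^1$ that is simultaneously a $\mathbb{Q}$-smoothing of $X$ and carries the anticanonical divisor $\mathcal{D}\subset\mathcal{X}$, specialising to $D$ at $0$ and restricting to a Du Val surface $D_t$ on the general fibre. Deformations of the pair are governed by the reflexive log tangent sheaf $\mathcal{T}_X(-\log D)=\mathcal{H}om(\Omega_X(\log D),\mathcal{O}_X)$; writing $T^1_{(X,D)}$ for first-order pair deformations and $p_i$ for the singular points of $X$, there is a local-to-global sequence
\[
H^1(\mathcal{T}_X(-\log D))\longrightarrow T^1_{(X,D)}\longrightarrow \bigoplus_i T^1_{(X,D),p_i}\longrightarrow H^2(\mathcal{T}_X(-\log D)).
\]
As in the proof of Theorem~\ref{qfanounobsintro}, I would first establish the vanishing $H^2(\mathcal{T}_X(-\log D))=0$ from the ampleness of $-K_X$ via a Kawamata--Viehweg type vanishing adapted to the log pair; this simultaneously yields unobstructedness of pair deformations and, crucially, surjectivity of the global-to-local map $T^1_{(X,D)}\twoheadrightarrow\bigoplus_i T^1_{(X,D),p_i}$.

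Granting that surjectivity, the problem becomes purely local, and here the key observation makes part~(2) nearly automatic. Passing to the index-one cover, the germ $(X,D,p_i)$ lifts to a cDV hypersurface germ $V_i$ with $\mu_r$-action, $X\cong V_i/\mu_r$ locally, together with $\widetilde D_i=\{f_i=0\}\subset V_i$, where $f_i$ is $\mu_r$-semi-invariant of weight $1$ (the character of $x_1$), matching the local class of $-K_X$. By Reid--Mori the germ $V_i$ admits a $\mu_r$-equivariant $\mathbb{Q}$-smoothing to the smooth cyclic-quotient model $\mathbb{C}^3=\Spec\mathbb{C}[x_1,x_2,x_3]$ with weights $(1,a,r-a)$. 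On that model a \emph{general} weight-$1$ function has the form $x_1+(\text{higher order})$, so its zero locus is smooth and descends to $\tfrac{1}{r}(a,r-a)\subset\tfrac{1}{r}(1,a,r-a)$, a Du Val germ of type $A_{r-1}$ sitting inside the ambient quotient in exactly the shape predicted by~(2). Thus it suffices to show that the equivariant smoothing of $V_i$ can be accompanied by a deformation of $f_i$ to such a general weight-$1$ function; equivalently, that the forgetful map $\Def(V_i,\widetilde D_i)^{\mu_r}\to\Def(V_i)^{\mu_r}$ is dominant and the generic pair in the image has smooth $\widetilde D_i$.

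Having produced these good local deformations, I would integrate them. The surjectivity from the first paragraph lets me choose a global first-order pair deformation inducing the selected local ones; the vanishing $H^2(\mathcal{T}_X(-\log D))=0$ lets me lift it to a one-parameter family $(\mathcal{X},\mathcal{D})\to\Delta^1$; and the $\mathbb{Q}$-smoothing constituent of the local data guarantees that $\mathcal{X}_t$ has only quotient singularities. Away from the finitely many quotient points of $\mathcal{X}_t$, a general anticanonical member is smooth by Bertini, while at each quotient point it is the standard Du Val germ by the local analysis; hence $D_t$ is Du Val everywhere, proving~(1) and~(2) together.

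The main obstacle is exactly the step for \emph{arbitrary}, possibly very singular, $D$: both $H^2(\mathcal{T}_X(-\log D))=0$ and the dominance of $\Def(V_i,\widetilde D_i)^{\mu_r}\to\Def(V_i)^{\mu_r}$ are delicate because $D$ is only a Weil (non-Cartier) divisor through the terminal points, so $\mathcal{T}_X(-\log D)$ is merely reflexive, standard vanishing does not apply verbatim, and the obstruction space for pair deformations need not vanish even though that for $X$ alone does. When $D$ is worse than a general hyperplane section the given $f_i$ may lie far from the locus $x_1+\cdots$, and one must show the equivariant smoothing of $V_i$ genuinely frees up a weight-$1$ generator rather than remaining trapped in a degenerate stratum; reconciling this with the constraints of the \emph{global} system $|{-}K_X|$ (whose base points are precisely what force bad general elephants in the known examples) is where I expect the real difficulty. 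A plausible reduction is to isolate the crux as a single partial-smoothing statement: first deform $(X,D)$ so that $D$ becomes Du Val without yet $\mathbb{Q}$-smoothing $X$, and then invoke the case of a Du Val anticanonical element already handled in this paper—so that the entire conjecture would follow from the assertion that every anticanonical pair admits a partial smoothing to a Du Val elephant.
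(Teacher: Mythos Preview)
This statement is a \emph{conjecture}; the paper does not prove it. The paper proves only the special case where the given $D$ already has Du Val singularities (Theorem~\ref{simulqsmqfanointro}), and explicitly says the general case ``with only non Du Val elephants'' is left to other work. So there is no ``paper's own proof'' to compare against, and your final paragraph in fact lands on precisely the reduction the paper leaves open: deform first to a Du Val elephant, then invoke Theorem~\ref{simulqsmqfanointro}.

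That said, your proposed mechanism has a genuine gap even for the Du Val case. You want the vanishing $H^2(\mathcal{T}_X(-\log D))=0$ to give both unobstructedness and surjectivity of $T^1_{(X,D)}\to\bigoplus_i T^1_{(X,D),p_i}$. But the paper stresses (see the Remark before the proof of Theorem~\ref{qsmqfano}) that already $H^2(X,\Theta_X)$ can be nonzero for a $\mathbb{Q}$-Fano $3$-fold, so the global-to-local map for $X$ alone need not be surjective; there is no reason the log version should behave better, and $D$ being merely $\mathbb{Q}$-Cartier through the non-Gorenstein points makes a direct Kawamata--Viehweg argument for the reflexive sheaf $\mathcal{T}_X(-\log D)$ unavailable. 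The paper's proof of unobstructedness for the pair (Theorem~\ref{pairunobs}) does \emph{not} go through any such $H^2$-vanishing: it shows the forgetful map $\Def_{(X,D)}\to\Def_X$ is smooth by proving $H^1(D,\mathcal{N}_{D/X})=0$ and then appeals to the already-established unobstructedness of $X$.

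For lifting local smoothings to a global one in the Du Val case, the paper avoids your surjectivity claim entirely. Instead it runs a Namikawa--Steenbrink style diagram (diagram~(\ref{logdiag})): take a cyclic cover $Y\to X$ branched along a general $D_m\in|{-}mK_X|$, build a $\mathbb{Z}_m$-equivariant resolution with controlled discrepancies on the elephant (Proposition~\ref{essresolprop}), and use coboundary maps into local cohomology $H^2_{E_i}$. The key replacements for your two unproven inputs are (a) Lemma~\ref{rest0}, which says $\Ker\tau_{(V,\Delta)}$ maps to zero under the residue to $T^1_{\Delta'}$ because the blow-down map on deformations of a Du Val surface is zero, and (b) Claim~\ref{comp0}, which kills the composite $\beta_i\circ\phi_i$ via a Lefschetz-type statement on class groups (Proposition~\ref{lefprop}) rather than any cohomology vanishing. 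Together these allow one to lift $\phi_i(\eta_i)$ to a global class without ever asserting $T^1_{(X,D)}\twoheadrightarrow\bigoplus_i T^1_{(U_i,D_i)}$. Your local picture at the index-one cover is on the right track, but the globalisation step as you outline it does not go through.
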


We call a deformation as above a {\it simultaneous $\mathbb{Q}$-smoothing} of a pair $(X,D)$. 
If we first assume the existence of a Du Val elephant, we get the following result, which is proved in Section \ref{simulqsmproofsection}. 

\begin{thm}\label{simulqsmqfanointro}
Let $X$ be a $\mathbb{Q}$-Fano $3$-fold. 
Assume that $|{-}K_X|$ contains an element $D$ with only Du Val singularities. 

Then $X$ has a simultaneous $\mathbb{Q}$-smoothing. In particular, $X$ has a $\mathbb{Q}$-smoothing.  
\end{thm}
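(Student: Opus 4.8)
The plan is to deform the pair $(X,D)$ rather than $X$ alone, reducing the problem to a local question by combining the unobstructedness of $\Def(X)$ (Theorem~\ref{qfanounobsintro}) with a cohomology vanishing for the logarithmic tangent sheaf of $(X,D)$. The first step, and the one I expect to be hardest, is the local analysis. For each $p\in\Sing X\cup\Sing D$ the analytic germ $(p\in D\subset X)$ consists of a $3$-dimensional terminal singularity (or a smooth point of $X$) together with a Du Val anticanonical member; using the classification of terminal $3$-fold singularities and of their general elephants (Reid--Mori), one checks that each such germ admits a one-parameter deformation of the pair whose general fibre is $\bigl(\tfrac1r(a,r-a)\subset\tfrac1r(1,a,r-a)\bigr)$ for suitable coprime $r,a$ (with $r=1$, i.e. $D$ smoothed, precisely when $X$ is smooth at $p$), and that the deformations of the germ with this property form a Zariski-dense subset of $\Def(X,D;p)$ whose complement has positive codimension. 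Call such a deformation a \emph{local simultaneous $\mathbb{Q}$-smoothing}. The delicate point here is the exceptional Gorenstein index $4$ family, where a $\mathbb{Q}$-smoothing is not produced by a naive equivariant argument and one genuinely exploits the presence of the Du Val member $D$ to construct the required deformation of the pair.

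Next I would establish the global surjectivity needed to realise these local deformations simultaneously. Let $\mathcal{T}_{(X,D)}=\mathcal{H}om_{\mathcal{O}_X}(\Omega^1_X(\log D),\mathcal{O}_X)$ be the sheaf of vector fields tangent to $D$, and let $\mathcal{T}^1_{(X,D)}$ be the sheaf of first-order deformations of the germ of the pair, supported on $\Sing X\cup\Sing D$ with stalks the local deformation spaces $T^1_{(X,D),p}$. The local-to-global spectral sequence for deformations of the pair gives the exact sequence
\[
H^1\bigl(X,\mathcal{T}_{(X,D)}\bigr)\to T^1_{(X,D)}\to\bigoplus_{p}T^1_{(X,D),p}\to H^2\bigl(X,\mathcal{T}_{(X,D)}\bigr),
\]
so the point is to show $H^2(X,\mathcal{T}_{(X,D)})=0$. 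Since $K_X+D\sim 0$, the sheaf $\mathcal{T}_{(X,D)}$ has trivial determinant on the smooth locus, so Serre duality (carried out on a log resolution of $(X,D)$ and transferred back to $X$, whose singularities and those of $D$ are rational) identifies $H^2(X,\mathcal{T}_{(X,D)})^{\vee}$ with $H^1\bigl(X,\Omega^1_X(\log D)\otimes\omega_X\bigr)$; twisting the residue sequence $0\to\Omega^1_X\to\Omega^1_X(\log D)\to\mathcal{O}_D\to0$ by $\omega_X\cong\mathcal{O}_X(-D)$ then sandwiches this group between $H^1(X,\Omega^1_X\otimes\omega_X)$ and $H^1(D,\mathcal{O}_D(-D))$. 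The former vanishes by Akizuki--Nakano--Kawamata--Viehweg vanishing ($\omega_X^{-1}\cong\mathcal{O}_X(-K_X)$ being ample and $1+1<3$), the latter by Kawamata--Viehweg vanishing on the Du Val surface $D$ (on which $\omega_D\cong\mathcal{O}_D$ and $\mathcal{O}_D(D)\cong\mathcal{O}_X(-K_X)|_D$ is ample). Hence $T^1_{(X,D)}\to\bigoplus_p T^1_{(X,D),p}$ is surjective.

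Finally, $-K_{\mathcal{X}/S}$ stays relatively ample along any deformation and $H^i(X,\mathcal{O}_X(-K_X))=0$ for $i>0$, so the anticanonical system forms a locally free family and $D$ extends flatly; together with $H^1(D,N_{D/X})=H^1(D,\mathcal{O}_X(-K_X)|_D)=0$ (again Kawamata--Viehweg on $D$), this makes the forgetful morphism $\Def(X,D)\to\Def(X)$ smooth and surjective, so $\Def(X,D)$ is smooth by Theorem~\ref{qfanounobsintro}. Using the previous two steps, pick $\xi\in T^1_{(X,D)}$ whose image at every $p\in\Sing X\cup\Sing D$ lies in the (dense) set of local simultaneous $\mathbb{Q}$-smoothing directions and outside the tangent cone of the locus of non-simultaneously-$\mathbb{Q}$-smoothing deformations of the germ; such $\xi$ exists since these conditions are dense open and the map above is surjective. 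Because $\Def(X,D)$ is smooth, $\xi$ integrates --- via the versal family, Artin algebraization, and analytification --- to an analytic deformation $f\colon\mathcal{X}\to\Delta^1$ with a flat divisor $\mathcal{D}\subset\mathcal{X}$ such that $\mathcal{D}_0=D$. After shrinking $\Delta^1$, the local models of the first step show that for general $t$ the threefold $\mathcal{X}_t$ has only (terminal) cyclic quotient singularities --- which are isolated, hence of codimension $\ge 3$ --- and that near each of its Du Val points $\mathcal{D}_t$ sits inside $\mathcal{X}_t$ as $\tfrac1r(a,r-a)\subset\tfrac1r(1,a,r-a)$; since $-K_{\mathcal{X}_t}$ is ample, $\mathcal{X}_t$ is a $\mathbb{Q}$-Fano $3$-fold with $\mathcal{D}_t\in|{-}K_{\mathcal{X}_t}|$. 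Thus $f$ is a simultaneous $\mathbb{Q}$-smoothing of $(X,D)$, and in particular a $\mathbb{Q}$-smoothing of $X$.
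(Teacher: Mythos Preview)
Your proposal has a genuine gap at the step claiming $H^2(X,\mathcal{T}_{(X,D)})=0$, which is the heart of your argument. The Serre duality you invoke does not reduce to $H^1(X,\Omega^1_X(\log D)\otimes\omega_X)$ on a singular $X$: since $\mathcal{T}_{(X,D)}$ is not locally free at the singular points, Grothendieck duality on the Cohen--Macaulay $X$ gives $H^2(X,\mathcal{T}_{(X,D)})^\vee\simeq\Ext^1_X(\mathcal{T}_{(X,D)},\omega_X)$, and the local $\underline{\Ext}^1$ terms supported on $\Sing X$ obstruct identifying this with an $H^1$ of a sheaf. Working on a log resolution and ``transferring back'' does not fix this: rationality of singularities controls $R\mu_*\mathcal{O}$ and $R\mu_*\omega$, but not the higher direct images of $\Omega^1_{\tilde X}(\log)$ or $\Omega^2_{\tilde X}(\log)$ in the way you need. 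Likewise, Akizuki--Nakano is a smooth-variety statement; its singular analogues (e.g.\ the Guill\'en--Navarro Aznar--Puerta--Steenbrink vanishing used in the paper) live on the resolution and involve logarithmic poles along the full exceptional divisor, not on $X$ itself. The paper in fact warns explicitly (Remark before the proof of Theorem~\ref{qsmqfano}) that $H^2(X,\Theta_X)\neq 0$ occurs for Namikawa's example, so the non-log restriction $T^1_X\to\bigoplus_i T^1_{U_i}$ genuinely fails to be surjective; your argument would need to rule out the same phenomenon for the log sheaf, and you have not done so.

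The paper's proof is organised precisely to avoid this obstacle. It never claims surjectivity of $T^1_{(X,D)}\to\bigoplus_i T^1_{(U_i,D_i)}$. Instead it passes to a cyclic cover $Y\to X$, takes a $G$-equivariant resolution $\tilde Y\to Y$ adapted to $(Y,\Delta)$ via Proposition~\ref{essresolprop}, and replaces the local first-order class $\eta_i$ by its image $\phi_i(\eta_i)$ in a local-cohomology group $H^2_{E_i}(\tilde X,\mathcal{F}^{(0)})$. The lifting step is then Claim~\ref{comp0}, $\beta_i\circ\phi_i=0$, proved by a Lefschetz-type statement for class groups (Proposition~\ref{lefprop}); this replaces your vanishing of $H^2$. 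Lemma~\ref{rest0} (which uses the crepancy $\nu_\Delta^*K_\Delta=K_{\tilde\Delta}$ coming from the Du Val hypothesis) then guarantees that the global lift still induces a smoothing of each $D_i$, even though one has only lifted $\phi_i(\eta_i)$ rather than $\eta_i$. Your local analysis and the smoothness of $\Def_{(X,D)}\to\Def_X$ are in the paper too (Section~\ref{terminalsect}, Theorem~\ref{pairunobs}), but the global step needs this more delicate machinery, not the $H^2$-vanishing you propose.
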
 
Note that we do not need the assumption of ordinary singularities as in Theorem \ref{qsmqfanointro}. 
The motivation of Conjecture \ref{simulqsmconj} is to treat a $\mathbb{Q}$-Fano $3$-fold with only non Du Val elephants. 
We investigate this case in elsewhere. 

\vspace{5mm}

A $\mathbb{Q}$-Fano 3-fold is called {\it primary} if its canonical divisor generates the class group mod torsion elements. 
Takagi \cite{Takdv} studied primary $\mathbb{Q}$-Fano 3-folds with only terminal quotient singularities and established the genus bound for those with 
Du Val elephants. Hence Theorems \ref{qsmqfanointro} and \ref{simulqsmqfanointro} are useful for the classification. 
Actually, as an application of Theorem \ref{simulqsmqfanointro}, we can reprove his bound as follows. 

\begin{cor}\label{genusboundintro}
Let $X$ be a primary $\mathbb{Q}$-Fano $3$-fold. 
Assume that $X$ is non-Gorenstein and $|{-}K_X|$ contains an element with only Du Val singularities.  

Then $h^0(X, {-}K_X) \le 10$. 
\end{cor}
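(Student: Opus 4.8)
The plan is to reduce the genus bound to the case of terminal quotient singularities, where Takagi's result \cite{Takdv} applies. By Theorem \ref{simulqsmqfanointro}, the pair $(X, D)$ with $D \in |{-}K_X|$ Du Val admits a simultaneous $\mathbb{Q}$-smoothing $f \colon \mathcal{X} \to \Delta^1$; for general $t \in \Delta^1$ the fiber $X_t = \mathcal{X}_t$ is a $\mathbb{Q}$-Fano $3$-fold with only quotient singularities (of codimension $\ge 3$, by the definition of $\mathbb{Q}$-smoothing), and $|{-}K_{X_t}|$ contains a Du Val element $D_t$. First I would check that $X_t$ remains primary: since $f$ is a small proper deformation of a $\mathbb{Q}$-Fano $3$-fold, the restriction maps on divisor class groups (mod torsion) and on Picard groups behave well under specialization — one can use that $H^i(X, \mathcal{O}_X) = 0$ for $i>0$ (Kawamata--Viehweg vanishing, since $X$ has rational singularities and $-K_X$ is ample) so that the Picard group and the (reflexive) class group are discrete invariants, and upper semicontinuity forces $\rho$ and the rank of the class group to stay constant on a neighborhood of $0$; hence $X_t$ is again primary and non-Gorenstein for $t$ general. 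Next I would invoke the upper semicontinuity of cohomology: $h^0(X, {-}K_X) \le h^0(X_t, {-}K_{X_t})$, using that $-K_{\mathcal{X}}$ is a relatively ample $\mathbb{Q}$-Cartier divisor flat over $\Delta^1$, so the function $t \mapsto h^0(X_t, {-}K_{X_t})$ is upper semicontinuous and the special value is $\le$ the general value (actually, by Kawamata--Viehweg vanishing $h^i({-}K_{X_t}) = 0$ for $i>0$, so $\chi$ is constant and one gets equality, but the inequality suffices). Finally, Takagi's genus bound \cite{Takdv} for primary non-Gorenstein $\mathbb{Q}$-Fano $3$-folds with terminal quotient singularities and a Du Val elephant gives $h^0(X_t, {-}K_{X_t}) \le 10$, and combining yields $h^0(X, {-}K_X) \le 10$.

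The main obstacle I anticipate is the bookkeeping around $\mathbb{Q}$-Cartier-ness and the class group under deformation: one must be careful that the relative anticanonical sheaf $\omega_{\mathcal{X}/\Delta^1}^{[-1]}$ is compatible with base change to the fibers (true here because all fibers have terminal, hence rational and Cohen--Macaulay, singularities, so the reflexive hull commutes with restriction away from a codimension-$\ge 2$ locus and the relevant $h^0$ is unchanged), and that "primary" is genuinely preserved — the latter needs that no extra divisor classes appear on $X_t$, which follows from constancy of the Picard number for the smoothing of a Fano together with the torsion-freeness considerations above. Once these points are settled, the corollary is essentially a formal consequence of Theorem \ref{simulqsmqfanointro} plus semicontinuity plus \cite{Takdv}; no hard new computation is required.
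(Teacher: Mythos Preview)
Your approach is essentially the paper's own: deform via Theorem~\ref{simulqsmqfanointro}, show $h^0(-K)$ is unchanged along the family, then invoke Takagi's bound (Theorem~\ref{takagi}) on the general fibre. The paper cites \cite[Theorem~5.28]{KM} directly for the constancy $h^0(X,-K_X)=h^0(\mathcal{X}_t,-K_{\mathcal{X}_t})$, while you derive it from Kawamata--Viehweg vanishing plus invariance of $\chi$; these are equivalent routes. You are also more explicit than the paper in flagging that ``primary'' and ``non-Gorenstein'' must persist on $\mathcal{X}_t$ before Takagi's hypotheses are met.

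One slip to fix: upper semicontinuity runs the other way. The function $t\mapsto h^0(\mathcal{X}_t,-K_{\mathcal{X}_t})$ being upper semicontinuous means the value at the \emph{special} point $t=0$ is \emph{at least} the value at the general point, i.e.\ $h^0(X,-K_X)\ge h^0(\mathcal{X}_t,-K_{\mathcal{X}_t})$, which is the wrong direction for your purpose. So the clause ``the inequality suffices'' is false; what you actually need, and what you correctly supply in the parenthetical, is the \emph{equality} coming from $h^i(\mathcal{X}_t,-K_{\mathcal{X}_t})=0$ for $i>0$ and constancy of $\chi$. Rewrite that sentence to lead with the vanishing argument rather than semicontinuity, and the proof is complete.
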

Takagi expected the existence of a Du Val elephant for $X$ such that $h^0(X,-K_X)$ is appropriately big (\cite[p.37]{Takdv}). 
If we assume the expectation, Corollary \ref{genusboundintro} implies the genus bound as above for every primary $\mathbb{Q}$-Fano $3$-fold.

\vspace{5mm}

\subsection{Outline of the proofs}

We sketch the proof of the above theorems on a $\mathbb{Q}$-Fano 3-fold $X$.
 
First, we explain how to prove the unobstructedness briefly. 
If $X$ is Gorenstein, we have  
\[
\Ext^2_{\mathcal{O}_X}( \Omega^1_X, \mathcal{O}_X) \simeq \Ext^2_{\mathcal{O}_X}( \Omega^1_X \otimes \omega_X, \omega_X) \simeq H^1(X, \Omega^1_X \otimes \omega_X)^* 
\]
since $\omega_X$ is invertible and the unobstructedness is reduced to the Kodaira-Nakano type vanishing of the cohomology. 
However, if $X$ is non-Gorenstein, that is, $\omega_X$ is not invertible, we can not reduce the vanishing of the $\Ext$ group to the vanishing of cohomology groups a priori and 
we do not have a direct method to prove the vanishing of the $\Ext$ group. 
Moreover, since we do not have a branched cover of a $\mathbb{Q}$-Fano 3-fold which is Fano or Calabi-Yau in the general case, 
we can not reduce the unobstructedness to that of such cover.  
We solve this difficulty by considering the obstruction classes rather than the ambient obstruction space $\Ext^2$ and considering the smooth part. 
The important point is that  deformations of $X$ are bijective to deformations of the smooth part as in \cite[12.1.8]{kollarmoriflip} or 
\cite[Theorem 12]{kollarflat}. 
The description of the obstruction by a 2-term extension as in Proposition \ref{obs} is a crucial tool.

In order to find a good deformation of first order, we follow the line of the proof in the case of Fano index 1 by Minagawa \cite{mina} 
which used  \cite[Theorem 1]{NamSt} of Namikawa-Steenbrink  on the non-vanishing of the homomorphism between cohomology groups. 
We need a generalisation of this theorem to the non-Gorenstein setting which is Proposition \ref{ordinarynonzero}. 
We can generalise this lemma provided that the singularity is ordinary. 
 The generalisation of this lemma for general terminal singularities implies Conjecture \ref{qsmqfanoconj}.

Now, in order to find a good deformation of first order under the assumption of a Du Val elephant, 
we use the deformation theory of the pair of $X$ and $D$ where $D \in |{-} K_X|$. 
The smoothness of the Kuranishi space of $X$ implies that the smoothness of the Kuranishi space of the pair $(X,D)$ for 
$D \in |{-}K_X|$ (Theorem \ref{pairunobs}). 
The important point in the proof is that an elephant contains the non-Gorenstein points of $X$. 
By this, in order to see that a deformation of $X$ is a $\mathbb{Q}$-smoothing, it is 
enough to see that the singularities of $D$ deforms non trivially.  
Here we adapt the diagram of \cite[Theorem 1.3]{NamSt} to the case $(X,D)$. 
Instead of the Namikawa-Steenbrink's proposition \cite[Theorem 1.1]{NamSt}  on non-vanishing of a certain cohomology map, we use 
the coboundary map of the local cohomology sequence for the pair.  
To use such a map, we arrange a resolution of singularities of the pair which has 
non-positive discrepancies as in Proposition \ref{essresolprop}. 
Moreover we refine the Lefschetz theorem for class groups by Ravindra-Srinivas \cite{RSGL} for our cases (Proposition \ref{lefprop}) and this Lefschetz statement plays 
an important role for lifting.

\section{Unobstructedness of deformations of a $\mathbb{Q}$-Fano 3-fold}

\subsection{Preliminaries on infinitesimal deformations} 
First, we introduce a deformation functor of an algebraic scheme. 
\begin{defn}\label{defschdefn}(cf. \cite[1.2.1]{Sernesi}) 
Let $X$ be an algebraic scheme over $k$ and $S$ an algebraic scheme over $k$ with 
a closed point $s \in S$. 
A {\it deformation} of $X$ over $S$ is a pair $(\mathcal{X}, i)$, 
where $\mathcal{X}$ is a scheme flat over $S$ and 
$i \colon X \hookrightarrow \mathcal{X}$ is a closed immersion such that 
the induced morphism $X \rightarrow \mathcal{X} \times_S \{s \}$ is an isomorphism. 

Two deformations $(\mathcal{X}_1, i_1)$ and $(\mathcal{X}_2, i_2)$ over $S$ are said to be 
{\it equivalent} if there exists an isomorphism $\varphi \colon \mathcal{X}_1 \rightarrow \mathcal{X}_2$ over $S$ 
which commutes the following diagram; 
\[
\xymatrix{
X \ar@{^{(}->}^{i_1}[r] \ar@{^{(}->}^{i_2}[rd] & \mathcal{X}_1 \ar[d]^{\varphi} \\
 & \mathcal{X}_2
}
\]
Let $\mathcal{A}$ be the category of Artin local $k$-algebras with residue field $k$. 
We define the functor $\Def_X \colon \mathcal{A} \rightarrow (Sets)$ by setting 
\begin{equation}\label{functordef}
\Def_X(A):= \{ (\mathcal{X}, i): \text{deformation of } X \text{ over } \Spec A \}/({\rm equiv}),    
\end{equation}
where $({\rm equiv})$ means the equivalence introduced in the above. 
\end{defn} 

We also introduce the deformation functor of a closed immersion. 

\begin{defn}\label{pairdefschdefn}(cf. \cite[3.4.1]{Sernesi})
Let $f \colon D \hookrightarrow X$ be a closed immersion of algebraic schemes over an algebraically closed field $k$ and $S$ an algebraic scheme over $k$ with 
a closed point $s \in S$. 
A {\it deformation} of a pair $(X,D)$ over $S$ is a data $(F, i_X, i_D)$ in the cartesian diagram 
\begin{equation}\label{deformmapdiagram}
\xymatrix{
D \ar@{^{(}->}[r]^{i_D} \ar[d]^{f} & \mathcal{D} \ar[d]^{F} \\
X \ar@{^{(}->}[r]^{i_X}  \ar[d] & \mathcal{X} \ar[d]^{\Psi} \\ 
\{s \} \ar@{^{(}->}[r] & S, 
}
\end{equation}
where $\Psi$ and $\Psi \circ F$ are flat and $i_D, i_X$ are closed immersions. 
Two deformations $(F, i_D, i_X)$ and $(F', i_D', i_X')$ of $(X,D)$ over $S$ are said to be 
{\it equivalent} if there exist isomorphisms $\alpha \colon \mathcal{X} \rightarrow \mathcal{X}'$ and 
$\beta \colon \mathcal{D} \rightarrow \mathcal{D}'$ over $S$ 
which commutes the following diagram; 
\[
\xymatrix{
D \ar@{^{(}->}[r]^{i_D} \ar@{^{(}->}[rd]^{i'_{D}} & \mathcal{D} \ar[r] \ar[d]^{\beta} & \mathcal{X} \ar[d]^{\alpha} & 
X \ar@{^{(}->}[l]^{i_X} \ar@{^{(}->}[ld]^{i_X'} \\
 & \mathcal{D}' \ar[r] & \mathcal{X}'. &   
}
\] 
 
We define the functor $\Def_{(X,D)} \colon \mathcal{A} \rightarrow (Sets)$ by setting 
\begin{equation}\label{pairfunctordef}
\Def_{(X,D)}(A):= \{ (F, i_D, i_X): \text{deformation of } (X,D) \text{ over } \Spec A \}/({\rm equiv}),    
\end{equation}
where $({\rm equiv})$ means the equivalence introduced in the above. 
\end{defn}

We study unobstructedness of the above functors in this section. 
Unobstructedness is defined as follows. 

\begin{defn}\label{unobsfunc}
We say that deformations of $X$ are {\it unobstructed} if, for all $A, A' \in \mathcal{A}$ with an exact sequence 
\[
0 \rightarrow J \rightarrow A' \rightarrow A \rightarrow 0
\]
such that $\mathfrak{m}_{A'} \cdot J =0$, the natural restriction map of deformations 
\[
\Def_X (A') \rightarrow \Def_X (A) 
\] 
is surjective, that is, $\Def_X$ is a smooth functor. 
\end{defn}

\begin{prop}\label{enough}
Let $X$ be an algebraic scheme with a versal formal couple $(R, \hat{u})$ in the sense of  \cite[Definition 2.2.6]{Sernesi}. 
Set $A_m:= k[t]/(t^{m+1})$ for all integers $m \ge 0$. 
Assume that 
\[
\Def_X(A_{n+1}) \rightarrow \Def_X(A_n) 
\]
are surjective for all non-negative integers $n \ge 0$. 

Then deformations of $X$ are unobstructed. 
\end{prop}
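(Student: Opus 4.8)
The plan is to prove Proposition~\ref{enough} by a standard argument combining the existence of a versal formal deformation with formal existence/Grothendieck's existence theorem. Since $X$ has a versal formal couple $(R, \hat{u})$, smoothness of the deformation functor $\Def_X$ is equivalent to smoothness of the ring $R$ over $k$, because $R$ pro-represents (up to the usual non-canonicity of versal hulls) the restriction of $\Def_X$ to complete local $k$-algebras. So the goal reduces to showing that $R$ is a power series ring $k[[t_1,\dots, t_d]]$ over $k$, where $d = \dim_k \Def_X(k[\varepsilon]/(\varepsilon^2))$ is the dimension of the tangent space.

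First I would recall the obstruction-theoretic criterion: a complete local Noetherian $k$-algebra $R$ with residue field $k$ and finite-dimensional cotangent space is a power series ring if and only if for every small extension $0 \to J \to A' \to A \to 0$ in $\mathcal{A}$, every local $k$-algebra homomorphism $R \to A$ lifts to $R \to A'$; equivalently, the obstruction map vanishes. Next, the key point is to reduce testing this lifting property to the one-parameter truncated families $A_m = k[t]/(t^{m+1})$. Write $R \cong k[[t_1,\dots,t_d]]/I$ with $I \subseteq \mathfrak{m}^2$, where $\mathfrak{m}$ is the maximal ideal of $S := k[[t_1,\dots,t_d]]$; I must show $I = 0$. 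Suppose not, and pick $0 \neq f \in I$ of minimal order $m+1 \ge 2$, so $f \in \mathfrak{m}^{m+1} \setminus \mathfrak{m}^{m+2}$. After a linear change of the $t_i$ (using that $k$ is infinite, being $\mathbb{C}$), we may assume the coefficient of $t_1^{m+1}$ in $f$ is nonzero. Then the composite $S \to S/I = R \to k[t]/(t^{m+1})$ sending $t_1 \mapsto t$ and $t_j \mapsto 0$ for $j \ge 2$ is well-defined (since every element of $I$, including $f$, has order $\ge m+1$, hence lies in the kernel), giving a point of $\Def_X(A_m)$. By hypothesis it lifts to $\Def_X(A_{m+1})$, i.e.\ to a $k$-algebra map $R \to k[t]/(t^{m+2})$ reducing to the previous one. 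Lifting this further through $S \twoheadrightarrow R$ and chasing, one finds that $f$ must map to $0$ in $k[t]/(t^{m+2})$ under $t_1 \mapsto t, t_j \mapsto 0$; but the image of $f$ is $(\text{coefficient of } t_1^{m+1}) \cdot t^{m+1} + (\text{higher order})$, which is nonzero in $k[t]/(t^{m+2})$ since that coefficient is a nonzero scalar — a contradiction. Hence $I = 0$ and $R \cong k[[t_1,\dots,t_d]]$ is smooth, so $\Def_X$ is smooth, i.e.\ deformations of $X$ are unobstructed.

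The main obstacle, and the step requiring the most care, is the bookkeeping in the reduction from the surjectivity of $\Def_X(A_{n+1}) \to \Def_X(A_n)$ for all $n$ to the absence of relations in $R$: one must correctly relate lifting a \emph{morphism} $R \to A_m$ (a point of $\Def_X(A_m)$ via versality) to the non-membership of a putative minimal-order relation $f$ in $I$, and ensure the chosen one-parameter quotient $S \to k[t]/(t^{m+1})$ actually factors through $R$. The infiniteness of the base field is used to make the generic linear change of coordinates that isolates a monomial $t_1^{m+1}$ appearing with nonzero coefficient in $f$; over $\mathbb{C}$ this is automatic. One should also note that it suffices to check the lifting criterion on the \emph{one-dimensional} artinian quotients because a power series ring is detected by its behavior on such quotients — equivalently, smoothness of a functor on all of $\mathcal{A}$ follows from smoothness along curves, a form of the infinitesimal lifting criterion. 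Everything else is the routine translation between the functorial language of Definition~\ref{unobsfunc} and the ring-theoretic language of the versal hull, as in \cite[\S2.2--2.3]{Sernesi}.
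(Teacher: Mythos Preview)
Your proof is correct and follows essentially the same route as the paper's: both pass from $\Def_X$ to the pro-representable functor $h_R$ via versality, deduce surjectivity of $h_R(A_{n+1}) \to h_R(A_n)$, and then conclude that $R$ is a power series ring. The only difference is that the paper outsources the last step to \cite[Lemma~5.6]{FM}, whereas you spell it out directly via the minimal-order-relation argument; one small imprecision is that the lifted map $R \to k[t]/(t^{m+2})$ need not literally send $t_1 \mapsto t$, $t_j \mapsto 0$ but rather $t_1 \mapsto t + a_1 t^{m+1}$, $t_j \mapsto a_j t^{m+1}$ --- however, the image of $f$ still has leading term $c\, t^{m+1}$ with $c \neq 0$, so your contradiction goes through.
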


\begin{proof}
For $A \in \mathcal{A}$, let $h_R (A)$  be the set of local $k$-algebra homomorphisms from $R$ to $A$. This rule defines a functor
\[
h_R \colon \mathcal{A} \rightarrow ({\rm Sets}).  
\]   

Since $(R,\hat{u})$ is versal, we have a smooth morphism of functors 
\[
\phi_{\hat{u}} \colon h_R \rightarrow \Def_X 
\]
defined by $\hat{u}$. 

Then we can see that 
\[
h_R( A_{n+1}) \rightarrow h_R(A_n)
\]
are surjective for all $n$ by the assumption and the versality. 

By  \cite[Lemma 5.6]{FM} and the assumption, we can see that $h_R$ is a smooth functor. 
This implies that $\Def_X$ is smooth. 
\end{proof}

We use the following lemma about an isomorphism of some $\Ext$ groups. 

\begin{lem}\label{extisom}
Let $X$ be an algebraic scheme over an algebraically closed field $k$. 
Let $\mathcal{X} \in \Def_X(A)$ be a deformation of $X$ over $A \in \mathcal{A}$. 
Let $\mathcal{F}$ be a coherent  $\mathcal{O}_{\mathcal{X}}$-module which is flat over $A$. 
Let $\mathcal{G}$ be a coherent $\mathcal{O}_X$-module which is also an $\mathcal{O}_{\mathcal{X}}$-module by the canonical surjection 
$\mathcal{O}_{\mathcal{X}} \twoheadrightarrow \mathcal{O}_X$ . 
Then we have the following; 
\begin{enumerate}
\item[(i)] $ \underline{\Ext}^i_{\mathcal{O}_{\mathcal{X}}}(\mathcal{F}, \mathcal{G}) \simeq \underline{\Ext}^i_{\mathcal{O}_{X}}(\mathcal{F} \otimes_A k, \mathcal{G})$ for all $i$, 
 where $\underline{\Ext}^i$ is a sheaf of $\Ext$ groups. 
\item[(ii)]  $\Ext^i_{\mathcal{O}_{\mathcal{X}}}(\mathcal{F}, \mathcal{G}) \simeq \Ext^i_{\mathcal{O}_{X}}(\mathcal{F} \otimes_A k, \mathcal{G})$ for all $i$. 
\end{enumerate}
\end{lem}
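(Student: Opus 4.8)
The plan is to reduce both statements to a purely local and then algebraic computation, and to handle (i) and (ii) in parallel since (ii) will follow from (i) together with a local-to-global spectral sequence argument, or can be proved directly by the same method. The key observation is that, because $\mathcal{X}$ is a flat deformation of $X$ over the Artin local ring $A$, the structure sheaf $\mathcal{O}_{\mathcal{X}}$ admits a finite filtration by powers of $\mathfrak{m}_A$ whose graded pieces are sheaves of $\mathcal{O}_X$-modules; more precisely, flatness of $\mathcal{F}$ over $A$ gives that for each $j$ the natural map $\mathfrak{m}_A^j / \mathfrak{m}_A^{j+1} \otimes_k (\mathcal{F}\otimes_A k) \to \mathfrak{m}_A^j \mathcal{F} / \mathfrak{m}_A^{j+1}\mathcal{F}$ is an isomorphism. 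This is the standard device (cf. \cite[Sec.~2]{Sernesi}) for transferring statements about $\mathcal{O}_{\mathcal{X}}$-modules to statements about $\mathcal{O}_X$-modules.

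First I would establish the case $A = k[\varepsilon]/(\varepsilon^2)$, or more generally a small extension $0 \to J \to A' \to A \to 0$ with $\mathfrak{m}_{A'} J = 0$, and then induct on the length of $A$. For the inductive step, write $A' \to A$ as a small extension with kernel $J \cong k$. Since $\mathcal{G}$ is killed by $\mathfrak{m}_{\mathcal{X}'}$-action coming through $A'$ (it is an $\mathcal{O}_X$-module, hence an $\mathcal{O}_{X}$-, equivalently $\mathcal{O}_{\mathcal{X}'}/\mathfrak{m}_{A'}\mathcal{O}_{\mathcal{X}'}$-module), and $\mathcal{F}$ flat over $A'$ sits in a short exact sequence
\[
0 \to \mathcal{F}\otimes_{A'} k \to \mathcal{F} \to \mathcal{F}\otimes_{A'} A \to 0
\]
of $\mathcal{O}_{\mathcal{X}'}$-modules (using $J\cong k$ and flatness), I would apply $\underline{\Ext}^\bullet_{\mathcal{O}_{\mathcal{X}'}}(-,\mathcal{G})$ and $\underline{\Ext}^\bullet_{\mathcal{O}_{\mathcal{X}'}}(\mathcal{F},-)$ and chase the resulting long exact sequences, comparing with the corresponding ones over $\mathcal{O}_{\mathcal{X}}$ and $\mathcal{O}_X$ via the base-change/change-of-rings identity $\underline{\Ext}^i_{\mathcal{O}_{\mathcal{X}'}}(\mathcal{F}\otimes_{A'}A, \mathcal{G}) \simeq \underline{\Ext}^i_{\mathcal{O}_{\mathcal{X}}}(\mathcal{F}\otimes_{A'}A, \mathcal{G})$ for modules annihilated by $J$. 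A clean way to package this is the spectral sequence for change of rings associated to $\mathcal{O}_{\mathcal{X}'} \to \mathcal{O}_{\mathcal{X}}$ (resp. $\to \mathcal{O}_X$): since $\mathcal{G}$ is an $\mathcal{O}_X$-module and $\mathcal{F}$ is flat over $A'$, the relevant $\underline{\mathrm{Tor}}$-terms $\underline{\mathrm{Tor}}^{\mathcal{O}_{\mathcal{X}'}}_p(\mathcal{F}, \mathcal{O}_{\mathcal{X}})$ vanish for $p>0$ (this is exactly $A'$-flatness of $\mathcal{F}$ localized), so the spectral sequence degenerates and yields the isomorphism.

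For part (ii) I would deduce it from (i) using the local-to-global spectral sequence $E_2^{p,q} = H^p(\mathcal{X}, \underline{\Ext}^q_{\mathcal{O}_{\mathcal{X}}}(\mathcal{F},\mathcal{G})) \Rightarrow \Ext^{p+q}_{\mathcal{O}_{\mathcal{X}}}(\mathcal{F},\mathcal{G})$, noting that the underlying topological space of $\mathcal{X}$ equals that of $X$ and that under (i) the sheaf $\underline{\Ext}^q_{\mathcal{O}_{\mathcal{X}}}(\mathcal{F},\mathcal{G})$ is identified with $\underline{\Ext}^q_{\mathcal{O}_X}(\mathcal{F}\otimes_A k, \mathcal{G})$ \emph{as a sheaf of abelian groups}; comparing the two spectral sequences term by term gives the global statement. (Alternatively, one runs the same filtration/induction argument directly with global $\Ext$, which is formally identical.) The main obstacle I anticipate is bookkeeping the module structures carefully: one must check that all the comparison maps are compatible with the $\mathcal{O}_{\mathcal{X}'}$-, $\mathcal{O}_{\mathcal{X}}$-, and $\mathcal{O}_X$-actions so that the long exact sequences genuinely line up, and that the vanishing of the higher $\underline{\mathrm{Tor}}$'s really does follow from flatness of $\mathcal{F}$ over $A$ rather than requiring flatness over $\mathcal{O}_{\mathcal{X}}$ — which it does, because base change along $A' \to A$ and the change of rings $\mathcal{O}_{\mathcal{X}'} \to \mathcal{O}_{\mathcal{X}}$ is induced from the same map of Artin rings. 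Everything else is a routine diagram chase.
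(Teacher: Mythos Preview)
Your proposal is correct, and for part (ii) you use exactly the same local-to-global spectral sequence argument as the paper. For part (i), however, your route is considerably more elaborate than the paper's.

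The paper does not induct on the length of $A$ at all. It simply takes a locally free $\mathcal{O}_{\mathcal{X}}$-resolution $\mathcal{E}_\bullet \to \mathcal{F}$, observes that $A$-flatness of $\mathcal{F}$ forces $\mathcal{E}_\bullet \otimes_A k \to \mathcal{F}\otimes_A k$ to remain exact (hence a locally free $\mathcal{O}_X$-resolution), and then uses the adjunction $\underline{\Hom}_{\mathcal{O}_{\mathcal{X}}}(\mathcal{E}_\bullet,\mathcal{G}) \simeq \underline{\Hom}_{\mathcal{O}_X}(\mathcal{E}_\bullet\otimes_A k,\mathcal{G})$ coming from the fact that $\mathcal{G}$ is an $\mathcal{O}_X$-module. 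Taking cohomology sheaves gives (i) in one step. Your change-of-rings spectral sequence for $\mathcal{O}_{\mathcal{X}} \to \mathcal{O}_X$ is essentially this same argument in disguise (the $\underline{\mathrm{Tor}}$-vanishing you invoke is precisely the exactness of $\mathcal{E}_\bullet \otimes_A k$), but you have wrapped it inside an unnecessary induction on small extensions $A' \to A$: the spectral sequence applied \emph{once} to the surjection $\mathcal{O}_{\mathcal{X}} \to \mathcal{O}_X$ already yields the full statement. The induction and the five-lemma chases add only bookkeeping, not content; the paper's approach is shorter and avoids the ``main obstacle'' you anticipate, since compatibility of module structures is automatic when one works with a single resolution.
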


\begin{proof}
(i) Let $\mathcal{E}_{\bullet} \rightarrow \mathcal{F} \rightarrow 0$ be a resolution of $\mathcal{F}$ by a complex $\mathcal{E}_{\bullet}$ 
of locally free $\mathcal{O}_{\mathcal{X}}$-modules.  
By \cite[Proposition 6.5]{HartAG}, we see that 
\begin{equation}\label{1stisomcalc}
\mathcal{H}^i (\underline{\Hom}_{\mathcal{O}_{\mathcal{X}}}(\mathcal{E}_{\bullet}, \mathcal{G})) \simeq \underline{\Ext}^i_{\mathcal{O}_{\mathcal{X}}}(\mathcal{F}, \mathcal{G}), 
\end{equation}
where $\mathcal{H}^i$ is a cohomology sheaf and $\underline{\Hom}$ is a sheaf of $\Hom$ groups. 
Since $\mathcal{F}$ is flat over $A$, we see that $\mathcal{E}_{\bullet} \otimes_A k \rightarrow \mathcal{F} \otimes_A k \rightarrow 0$ is still 
a resolution of the sheaf $\mathcal{F} \otimes_A k$. 
Hence we have 
\begin{equation}\label{2ndisomcalc}
\mathcal{H}^i ( \underline{\Hom}_{\mathcal{O}_{X}}(\mathcal{E}_{\bullet} \otimes_A k, \mathcal{G})) \simeq 
\underline{\Ext}^i_{\mathcal{O}_X} (\mathcal{F} \otimes_A k, \mathcal{G}).  
\end{equation}
Note that $\underline{\Hom}_{\mathcal{O}_{\mathcal{X}}}(\mathcal{E}_{\bullet}, \mathcal{G}) \simeq 
\underline{\Hom}_{\mathcal{O}_{X}}(\mathcal{E}_{\bullet} \otimes_A k, \mathcal{G})$ since $\mathcal{G}$ is an $\mathcal{O}_X$-module. 
By this and isomorphisms (\ref{1stisomcalc}) and (\ref{2ndisomcalc}), we obtain the required isomorphism in (i). 

(ii) This follows from (i) and the local-to-global spectral sequence of $\Ext$ groups; 
\[
H^i(\mathcal{X}, \underline{\Ext}_{\mathcal{O}_{\mathcal{X}}}^j(\mathcal{F}, \mathcal{G})) \Rightarrow \Ext_{\mathcal{O}_{\mathcal{X}}}^{i+j} (\mathcal{F}, \mathcal{G}) 
\]
 \end{proof}


\subsection{Description of obstruction classes}

We need the following description of the obstruction space for deformations.

\begin{prop}\label{obs}
Let $k$ be an algebraically closed field of characteristic $0$. 
Let $X$ be a reduced scheme of finite type over $k$.  
Let $U \subset X$ be an open subset with only l.c.i. singularities and $\iota \colon U \rightarrow X$  an inclusion map. 
Assume that $\depth \mathcal{O}_{X, \mathfrak{p}}  \ge 3$ for all scheme theoretic points $\mathfrak{p} \in X \setminus U$. 
(We obtain $\codim_X X \setminus U \ge 3$ by this condition.)
Let $\Omega^1_U$ be the K\"{a}hler differential sheaf on $U$. Set $A_n:= k[t]/(t^{n+1})$ and let 
\[
\xi_n:= (f_n \colon \mathcal{X}_n \rightarrow \Spec A_n) 
\]
 be a deformation of $X$.  
 
Then the obstruction to lift $\mathcal{X}_n$ over $A_{n+1}$ lies in $\Ext^2_{\mathcal{O}_U}( \Omega^1_U, \mathcal{O}_U )$.  
\end{prop}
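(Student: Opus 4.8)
The plan is to transfer the lifting problem for $\mathcal{X}_n$ to its restriction over the l.c.i. locus $U$, where the obstruction theory is controlled by $\Omega^1_U$ alone, and then to identify the relevant $T^2$ with $\Ext^2_{\mathcal{O}_U}(\Omega^1_U,\mathcal{O}_U)$ using that $U$ is reduced and l.c.i.

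First I would set up the reduction to $U$. Write $Z := X \setminus U$ and let $j \colon U \hookrightarrow X$ and $j_n \colon \mathcal{U}_n := \mathcal{X}_n \times_X U \hookrightarrow \mathcal{X}_n$ be the (topologically identical) open immersions. The hypothesis $\depth \mathcal{O}_{X,\mathfrak p} \ge 3$ for $\mathfrak p \in Z$ gives $\underline{H}^i_Z(\mathcal{O}_X) = 0$ for $i \le 2$; since $\depth A_n = 0$, flat local base change for depth shows that $\mathcal{O}_{\mathcal{X}_n}$ has the same depth at a point over $Z$ as $\mathcal{O}_X$ at its image, so the analogous vanishing holds on $\mathcal{X}_n$. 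Hence $j_*\mathcal{O}_U = \mathcal{O}_X$, $R^1 j_*\mathcal{O}_U = 0$, and $(j_n)_*\mathcal{O}_{\mathcal{U}_n} = \mathcal{O}_{\mathcal{X}_n}$. I would then show that $\mathcal{X}_n$ lifts over $A_{n+1}$ if and only if $\mathcal{U}_n$ does: one direction is restriction, and for the converse, starting from a flat lift $\mathcal{U}_{n+1} \to \Spec A_{n+1}$ of $\mathcal{U}_n$ one pushes the flatness sequence $0 \to \mathcal{O}_U \xrightarrow{t^{n+1}} \mathcal{O}_{\mathcal{U}_{n+1}} \to \mathcal{O}_{\mathcal{U}_n} \to 0$ forward along $j$ to obtain, using $R^1 j_*\mathcal{O}_U = 0$,
\[
0 \to \mathcal{O}_X \xrightarrow{t^{n+1}} j_*\mathcal{O}_{\mathcal{U}_{n+1}} \to \mathcal{O}_{\mathcal{X}_n} \to 0,
\]
and the local criterion of flatness over $A_{n+1}$ then makes $(X, j_*\mathcal{O}_{\mathcal{U}_{n+1}})$ a flat deformation of $X$ lifting $\mathcal{X}_n$ and restricting to $\mathcal{U}_{n+1}$ over $U$. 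So the obstruction to lifting $\mathcal{X}_n$ may be identified with the one for lifting $\mathcal{U}_n$.

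Next, on the scheme $U$ I would invoke the standard obstruction theory for deformations of schemes (e.g. \cite{Sernesi}): the obstruction to lifting $\mathcal{U}_n$ over the small extension $A_{n+1} \to A_n$ (whose kernel is one-dimensional over $k$) lies in $T^2(U/k, \mathcal{O}_U) = \Ext^2_{\mathcal{O}_U}(\mathbb{L}_{U/k}, \mathcal{O}_U)$ and is represented by an explicit Yoneda $2$-extension of $\mathcal{O}_U$-modules. Finally, I would use that $U$ — being open in the reduced scheme $X$ and having only l.c.i. singularities — has cotangent complex $\mathbb{L}_{U/k}$ quasi-isomorphic to $\Omega^1_U$, with $\Omega^1_U$ of projective dimension at most $1$: locally embedding $U = V(f_1,\dots,f_c) \subset W$ with $W$ smooth and $f_1,\dots,f_c$ a regular sequence, the conormal map $I/I^2 \to \Omega^1_W|_U$ is injective, since its kernel $\mathcal{H}^{-1}(\mathbb{L}_{U/k})$ is killed by the Jacobian ideal — hence is torsion, being supported on the proper closed subset $\Sing U$ of the reduced scheme $U$ — while sitting inside the locally free sheaf $I/I^2$. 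This yields $\Ext^i_{\mathcal{O}_U}(\mathbb{L}_{U/k},\mathcal{O}_U) = \Ext^i_{\mathcal{O}_U}(\Omega^1_U,\mathcal{O}_U)$ for all $i$, and in particular the obstruction lies in $\Ext^2_{\mathcal{O}_U}(\Omega^1_U,\mathcal{O}_U)$.

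The main obstacle is the first step — that $j_*$ of a flat lift over $U$ is again flat over $A_{n+1}$ — which is where the depth hypothesis is genuinely needed (through the vanishing of the low local cohomology sheaves along $Z$ and its inheritance by the Artinian thickenings $\mathcal{X}_n$); this is the local manifestation of the principle that deformations of $X$ are governed by those of a large open subset, as in \cite[12.1.8]{kollarmoriflip} and \cite[Theorem 12]{kollarflat}. One cannot instead argue via a restriction map $T^2(X/k,\mathcal{O}_X) \to T^2(U/k,\mathcal{O}_U)$, which need not be injective — its kernel absorbs the contribution of the non-l.c.i.\ locus of $X$, which is contained in $Z$. It is precisely the equivalence of lifting problems that promotes $\Ext^2_{\mathcal{O}_U}(\Omega^1_U,\mathcal{O}_U)$ — possibly far smaller than $T^2(X/k,\mathcal{O}_X)$ — to an obstruction space for $\Def_X$.
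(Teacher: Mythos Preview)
Your proof is correct and follows the same overall strategy as the paper: reduce the lifting problem from $\mathcal{X}_n$ to the l.c.i.\ open set $\mathcal{U}_n$ using the depth hypothesis (via $R^1\iota_*\mathcal{O}_U=0$ and the local criterion of flatness applied to $\iota_*\mathcal{O}_{\mathcal{U}_{n+1}}$), and then identify the obstruction space on $U$ with $\Ext^2_{\mathcal{O}_U}(\Omega^1_U,\mathcal{O}_U)$. The difference is packaging. You invoke the cotangent-complex formalism and the quasi-isomorphism $\mathbb{L}_{U/k}\simeq\Omega^1_U$ for reduced l.c.i.\ schemes, whereas the paper builds the obstruction class by hand: it splices the pulled-back conormal sequence of $\Spec A_n\hookrightarrow\Spec A_{n+1}$ with the relative cotangent sequence of the l.c.i.\ morphism $\mathcal{U}_n\to\Spec A_n$ to obtain an explicit $2$-term extension of $\Omega^1_{\mathcal{U}_n/A_n}$ by $\mathcal{O}_U$, and then uses a base-change lemma (Lemma~\ref{extisom}) to identify the ambient Ext group over $\mathcal{U}_n$ with the one over $U$. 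Your route is cleaner and more conceptual; the paper's has the advantage that the explicit $2$-extension is reused in the proof of Theorem~\ref{unobsqfano}, where it is tensored with $\omega_{\mathcal{U}_n/A_n}$ and pushed forward to produce a companion class $o'_{\xi_n}$ whose vanishing is then checked via Serre duality and Kodaira-type vanishing. If you plan to use the proposition in that way, you will eventually need to unwind your abstract obstruction class into such a concrete representative anyway.
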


\begin{proof}

We need to define an element 
\[
o_{\xi_n} \in \Ext^2_{\mathcal{O}_U} (\Omega^1_U, \mathcal{O}_U)
\]
which has a property that $o_{\xi_n} = 0$ if and only if there is a deformation  
$\xi_{n+1} = (f_{n+1} \colon \mathcal{X}_{n+1} \rightarrow \Spec A_{n+1})$ which sits in the following cartesian diagram; 
\begin{equation}\label{liftdiag}
\xymatrix{
      \mathcal{X}_{n+1} \ar[d] &  \mathcal{X}_n \ar[l] \ar[d] \\
     \Spec A_{n+1} & \Spec A_n . \ar[l]}  
 \end{equation}

Since the characteristic of $k$ is zero, we have 
\[\Omega^1_{A_n/k} \simeq A_{n-1}
\]
 as $A_n$-modules and  
an exact sequence   
\begin{equation}\label{artindiffexact}
0 \rightarrow (t^{n+1}) \stackrel{d}{\rightarrow} \Omega^1_{A_{n+1}/k} \otimes_{A_{n+1}} A_n 
\rightarrow \Omega^1_{A_n/k} \rightarrow 0. 
\end{equation}

Let $f_{\mathcal{U}_n} \colon \mathcal{U}_n \rightarrow \Spec A_n$ be the flat deformation of $U$ induced by $f_n$. 
By pulling back the above sequence by the flat morphism $f_{\mathcal{U}_n}$, we get the following exact sequence; 

\begin{equation}\label{conormal}
0 \rightarrow \mathcal{O}_U \rightarrow f_{\mathcal{U}_n}^{\ast} (\Omega^1_{\Spec A_{n+1}/k} |_{\Spec A_n}) \rightarrow 
f_{\mathcal{U}_n}^{\ast} \Omega^1_{\Spec A_n/k} \rightarrow 0. 
\end{equation}

Then, there is the relative cotagent sequence of a relative l.c.i. morphism $f_{\mathcal{U}_n}$ (cf. \cite[Theorem D.2.8]{Sernesi}); 

\begin{equation}\label{cot}
0 \rightarrow f_{\mathcal{U}_n}^{\ast} \Omega^1_{\Spec A_n / k} \rightarrow \Omega^1_{\mathcal{U}_n/k} \rightarrow 
\Omega^1_{\mathcal{U}_n/ \Spec A_n} \rightarrow 0. 
\end{equation}

By combining the sequences (\ref{conormal}), (\ref{cot}), we get the following exact sequence;  

\begin{equation}\label{obsseq}
0 \rightarrow \mathcal{O}_U \rightarrow f_{\mathcal{U}_n}^{\ast} (\Omega^1_{\Spec A_{n+1}/k} |_{\Spec A_n}) 
\rightarrow \Omega^1_{\mathcal{U}_n/k} \rightarrow 
 \Omega^1_{\mathcal{U}_n/ \Spec A_n} \rightarrow 0. 
\end{equation}

Let \[
o_{\xi_n} \in \Ext^2_{\mathcal{O}_U} ( \Omega^1_U, \mathcal{O}_U) 
\simeq \Ext^2_{\mathcal{O}_{\mathcal{U}_n}} (  \Omega^1_{\mathcal{U}_n/ \Spec A_n}, \mathcal{O}_U)
\]
 be the element corresponding to the 
exact sequence (\ref{obsseq}). Note that $\Omega^1_{\mathcal{U}_n/ \Spec A_n}$ is a flat $A_n$-module since $U$ is generically smooth and 
has only l.c.i.\ singularities (\cite[Theorem D.2.7]{Sernesi}). 
Hence we obtain the above isomorphism of $\Ext^2$ by applying Lemma \ref{extisom}. 

We check that this $o_{\xi_n}$ is the obstruction to the existence of lifting of $\xi_n$ over $A_{n+1}$. 
 
Suppose that we have a lifting $\xi_{n+1}= (f_{n+1} \colon \mathcal{X}_{n+1} \rightarrow \Spec A_{n+1}) $ 
with the diagram (\ref{liftdiag}). 
Then we can see that $o_{\xi_n} =0$ as in  \cite[Proposition 2.4.8]{Sernesi}. 
 
Conversely, suppose that $o_{\xi_n}=0$. Consider the following exact sequence  
\[
\Ext^1_{\mathcal{O}_{\mathcal{U}_n}} (  \Omega^1_{\mathcal{U}_n/k}, \mathcal{O}_U) \stackrel{\epsilon}{\rightarrow} 
\Ext^1_{\mathcal{O}_{\mathcal{U}_n}} (f_{\mathcal{U}_n}^{\ast}\Omega^1_{\Spec A_n /k}, \mathcal{O}_U)
\stackrel{\delta}{\rightarrow} 
\Ext^2_{\mathcal{O}_{\mathcal{U}_n}} ( \Omega^1_{\mathcal{U}_n/\Spec A_n}, \mathcal{O}_U ) 
\] 
which is induced by the exact sequence (\ref{cot}). 
Consider 
\[
\gamma \in  \Ext^1_{\mathcal{O}_{\mathcal{U}_n}} (f_{\mathcal{U}_n}^{\ast} \Omega^1_{\Spec A_n /k}, \mathcal{O}_U)
\]
 which corresponds to the exact sequence (\ref{conormal}). 
It is easy to see that $\delta (\gamma) = o_{\xi_n}$.  
Hence there exists $\gamma' \in \Ext^1_{\mathcal{O}_{\mathcal{U}_n}} (  \Omega^1_{\mathcal{U}_n/k}, \mathcal{O}_U)$ such that 
$\epsilon (\gamma')=\gamma $.  
The class $\gamma'$ corresponds to the following short exact sequence 
\[
0 \rightarrow \mathcal{O}_U \rightarrow \mathcal{E} \rightarrow   \Omega^1_{\mathcal{U}_n/k} \rightarrow 0 
\]
for some $\mathcal{O}_{\mathcal{U}_n}$-module $\mathcal{E}$ on $\mathcal{U}_n$. 

We can construct a sheaf of rings 
$\mathcal{O}_{\mathcal{U}_{n+1}}$ as the fiber product  
\[
\mathcal{O}_{\mathcal{U}_{n+1}} := \mathcal{E} \times_{ \Omega^1_{\mathcal{U}_n/k}} \mathcal{O}_{\mathcal{U}_n}
\]
 as in 
 \cite[Theorem 1.1.10]{Sernesi}. We can define a multiplication of $(\xi, f), (\xi', f') \in \mathcal{O}_{\mathcal{U}_{n+1}}$ by 
 \[
 (\xi, f) \cdot (\xi', f'):= (f'\xi+f \xi' , ff'). 
 \] We also have a commutative diagram 
\begin{equation}\label{algextu}
	\xymatrix{
0 \ar[r] & t^{n+1} \cdot \mathcal{O}_U \ar[r] \ar[d]^{=} & \mathcal{O}_{\mathcal{U}_{n+1}} \ar[r] \ar[d] & 
 \mathcal{O}_{\mathcal{U}_n} \ar[r] \ar[d]^{d} & 0 \\
 0 \ar[r] & t^{n+1} \cdot \mathcal{O}_U \ar[r] & \mathcal{E} \ar[r] & 
  \Omega^1_{\mathcal{U}_n/k} \ar[r] & 0,  
}
\end{equation}
where the upper horizontal sequence is an exact sequence of sheaves of rings. 
We can put an $A_{n+1}$-algebra structure on $\mathcal{O}_{\mathcal{U}_{n+1}}$ as follows (cf.\ \cite[p.10]{namikawaronsetsu}); 

We have a commutative diagram 
\[
\xymatrix{
0 \ar[r] & t^{n+1} \cdot \mathcal{O}_{U} \ar[r] \ar[d]^{=} & f_{\mathcal{U}_n}^*(\Omega^1_{A_{n+1}/k} \otimes_{A_{n+1}} A_n) \ar[r] \ar[d]^{g_{\mathcal{E}}} & 
f_{\mathcal{U}_n}^*(\Omega^1_{A_n/k}) \ar[r] \ar[d] & 0 \\  
0 \ar[r] & t^{n+1} \cdot \mathcal{O}_{U} \ar[r]  & \mathcal{E} \ar[r]^{f_{\mathcal{E}}}  & 
\Omega^1_{\mathcal{U}_n/k} \ar[r]  & 0. 
}
\]
We have an element $f_{\mathcal{U}_n}^*(dt) \in H^0(\mathcal{U}_n, f_{\mathcal{U}_n}^*(\Omega^1_{A_{n+1}/k} \otimes_{A_{n+1}} A_n))$ and 
can define 
\[
t_{\mathcal{U}_{n+1}}:= (g_{\mathcal{E}} (f_{\mathcal{U}_n}^* (dt)), t) \in H^0(\mathcal{U}_{n+1},  \mathcal{O}_{\mathcal{U}_{n+1}}). 
\]
Since we can calculate $t_{\mathcal{U}_{n+1}}^{n+2}=0$, we can define a homomorphism 
$\varphi_{n+1} \colon A_{n+1} \rightarrow \mathcal{O}_{\mathcal{U}_{n+1}}$ 
such that $\varphi_{n+1}(t) = t_{\mathcal{U}_{n+1}}$ 
and put an $A_{n+1}$-algebra structure on $\mathcal{O}_{\mathcal{U}_{n+1}}$. 

We can check that $\mathcal{O}_{\mathcal{U}_{n+1}} \otimes_{A_{n+1}} A_n \simeq \mathcal{O}_{\mathcal{U}_n}$ and 
$\mathcal{O}_{\mathcal{U}_{n+1}} \otimes_{A_{n+1}} (t^{n+1}) \simeq (t^{n+1}) \mathcal{O}_X$. 
Thus, by the local criterion of flatness (\cite[Proposition 2.2]{HartDef}), we see that $\mathcal{O}_{\mathcal{U}_{n+1}}$ 
is flat over $A_{n+1}$.

We have the following claim. 

\begin{claim}\label{R1O} 
\begin{enumerate}
\item[(i)] $R^1 \iota_* \mathcal{O}_U = 0$.
\item[(ii)] Let $M$ be a finite $A_n$-module. Then 
\[
R^1 \iota_* (f_{\mathcal{U}_n}^* \widetilde{M}) =0,  
\]
where $\widetilde{M}$ is a 
coherent sheaf on $\Spec A_n$ associated to $M$. 
\end{enumerate}
\end{claim}

\begin{proof}[Proof of Claim]
\noindent(i) Let $p \in X \setminus U$ be a point and $U_p$ a small affine neighborhood of $p$. Put $Z_p:= U_p \cap (X \setminus U)$. 
It is enough to show that  $H^1(U_p \setminus Z_p, \mathcal{O}_{U_p \setminus Z_p}) =0$. 
We have $H^2_{Z_p}(U_p, \mathcal{O}_{U_p}) =0$ since $\depth_{q} \mathcal{O}_{X,q} \ge 3$ for all scheme-theoretic point 
$q \in Z_p$ by the hypothesis. Since $H^i(U_p, \mathcal{O}_{U_p}) = 0$ for $i=1,2$, we have $H^1(U_p \setminus Z_p, \mathcal{O}_{U_p}) \simeq 
H^2_{Z_p}(U_p, \mathcal{O}_{U_p})  =0$. 

\noindent(ii)
We proceed by induction on $\dim_k M$. 

If $M \simeq k$, then this is the first claim. 

Now assume that there is an exact sequence 
\[
0 \rightarrow k \rightarrow M \rightarrow M' \rightarrow 0
\] 
of $A_n$-modules and the claim holds for $M'$. 
Then we have an exact sequence 
\[
R^1 \iota_* (f_{\mathcal{U}_n}^* \widetilde{k}) \rightarrow R^1 \iota_* (f_{\mathcal{U}_n}^* \widetilde{M}) 
\rightarrow R^1 \iota_* (f_{\mathcal{U}_n}^* \widetilde{M'}) 
\]
and the left and right hand sides are zero by the induction hypothesis. Hence $R^1 \iota_* (f_{\mathcal{U}_n}^* \widetilde{M})=0$. 
\end{proof}

Note that $\iota_* \mathcal{O}_U \simeq \mathcal{O}_X, \iota_* \mathcal{O}_{\mathcal{U}_n} \simeq \mathcal{O}_{\mathcal{X}_n}$ 
by Claim \ref{R1O}. 
Set $\mathcal{O}_{\mathcal{X}_{n+1}} := \iota_* \mathcal{O}_{\mathcal{U}_{n+1}}$. 
By taking $\iota_*$ of (\ref{algextu}), we have an exact sequence 
\begin{equation}\label{algext}
0 \rightarrow t^{n+1} \cdot \mathcal{O}_X \rightarrow \mathcal{O}_{\mathcal{X}_{n+1}} \rightarrow \mathcal{O}_{\mathcal{X}_n} \rightarrow 0  
\end{equation}
since $R^1 \iota_* \mathcal{O}_U =0$. 
Thus we see that $\mathcal{O}_{\mathcal{X}_{n+1}} \otimes_{A_{n+1}} A_n \simeq \mathcal{O}_{\mathcal{X}_n}$. 
We can see that $\mathcal{O}_{\mathcal{X}_{n+1}}$ is flat over $A_{n+1}$ by \cite[Theorem 12]{kollarflat} since $\mathcal{O}_{\mathcal{U}_{n+1}}$ 
is flat over $A_{n+1}$. 

Let  $\mathcal{X}_{n+1}:=(X, \mathcal{O}_{\mathcal{X}_{n+1}})$ be the scheme defined by the sheaf $\mathcal{O}_{\mathcal{X}_{n+1}}$.  
Then the morphism $\mathcal{X}_{n+1} \rightarrow \Spec A_{n+1}$ is flat and  
\[
\xi_{n+1}:= ( \mathcal{X}_{n+1} \rightarrow \Spec A_{n+1}) 
\] is a lifting of $\xi_n$. 
\end{proof}

\begin{rem}
The author does not know whether the above construction of obstruction classes works for general $A,A'$ as in Definition \ref{unobsfunc}. 
Actually, the exact sequence (\ref{artindiffexact}) is not exact for a small extension $A'$ of $A$ in general. 
An example of such a small extension by Manetti is given in \cite{Sernesierrata}. 

However Proposition \ref{enough} reduces the study of unobstructedness to the case $A= A_n, A' = A_{n+1}$. 
\end{rem}

\subsection{Proof of Theorem \ref{qfanounobsintro}}

We need the following Lefschetz type theorem. 

\begin{thm}\label{lef}  \cite[Chapter 3.1. Theorem]{F} 
Let $X \subset \mathbb{P}^N$ be a projective variety of dimension $n$ and $L \subset \mathbb{P}^N$ a linear subspace of codimension $d \le n$. 
Assume that $X \setminus (X \cap L)$ has only l.c.i. singularities. Then the relative homotopy group satisfies 
\[
\pi_i (X, X \cap L) = 0  \ \ \ \ \ \ (i \leq n-d).  
\]
In particular,  the restriction map $H^i(X, \mathbb{C}) \rightarrow H^i(X\cap L, \mathbb{C})$ is injective for $i \leq n-d$. 
\end{thm}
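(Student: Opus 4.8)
\emph{Overall strategy.} The plan is to reduce to the case $\codim L = 1$, and then to exploit that in that case the complement $X \setminus (X \cap L)$ is affine; the cited reference carries this out, and I sketch its shape. For the reduction, I argue by induction on $d$. If $d \ge 2$, choose a linear subspace $L' \supset L$ with $\codim L' = d-1$; then $L$ is a hyperplane of $L' \cong \mathbb{P}^{N-d+1}$ and $(X \cap L') \cap L = X \cap L$. Since the locus $\Sigma \subset X$ along which $X$ fails to be l.c.i.\ is contained in $X \cap L \subset X \cap L'$ by hypothesis, $X \setminus (X \cap L')$ has only l.c.i.\ singularities, so by the inductive hypothesis $(X, X \cap L')$ is $(n-d+1)$-connected. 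The subspaces $L'$ of this kind form a projective space of positive dimension (as $d \ge 2$), and the sub-linear system of hyperplanes cutting them out has base locus $L$, so a Bertini argument shows that for generic $L'$ the section $X \cap L'$ has dimension $n-d+1$ and is l.c.i.\ away from $X \cap L$. Hence the $d=1$ case, applied inside $L'$ to $X \cap L'$ and its hyperplane $L$, shows that $(X \cap L', X \cap L)$ is $(n-d)$-connected, and the homotopy exact sequence of the triple $X \supset X \cap L' \supset X \cap L$ forces $\pi_i(X, X \cap L) = 0$ for $i \le n-d$.

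\emph{The hyperplane case.} Now $L = H$ is a hyperplane, and $U := X \setminus (X \cap H) = X \cap (\mathbb{P}^N \setminus H)$ is an affine variety of dimension $n$ with only l.c.i.\ singularities. A local complete intersection has rectified homotopical depth equal to its dimension (Hamm), so $U$ has rectified homotopical depth $\ge n$. Combined with affineness, this is exactly the input needed for the general Lefschetz theorem under a homotopical-depth hypothesis (proved by stratified Morse theory applied to a generic function on $X$, finite on $U$ and tending to $+\infty$ along $X \cap H$, which restricts to a proper Morse function on $U$, so that $X$ is obtained from $X \cap H$ up to homotopy by attaching cells of dimension $\ge n$): it yields $\pi_i(X, X \cap H) = 0$ for $i \le n-1$. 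When $X$ is smooth this is the Andreotti--Frankel picture — $U$ has the homotopy type of a CW complex of dimension $\le n$, hence $H^j(U) = 0$ for $j > n$, and Lefschetz--Alexander duality $H_k(X, X \cap H) \cong H^{2n-k}(U)$ gives the vanishing for $k \le n-1$ — and the l.c.i.\ hypothesis is precisely what substitutes, via homotopical depth, for the duality that is unavailable once $X$ is singular.

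\emph{Corollary and obstacle.} Finally, from $\pi_i(X, X \cap L) = 0$ for $i \le n-d$ one gets $H_i(X, X \cap L; \mathbb{Z}) = 0$ in that range by relative Hurewicz, hence $H^i(X, X \cap L; \mathbb{C}) = 0$, and the long exact sequence of the pair gives injectivity of $H^i(X, \mathbb{C}) \to H^i(X \cap L, \mathbb{C})$ for $i \le n-d$; the standard reductions (to $X$ connected and equidimensional, and the $\pi_1$-hypothesis of Hurewicz, which is part of the connectivity conclusion once $n-d \ge 1$) are routine. The genuine obstacle throughout is the hyperplane case for singular $X$: a local complete intersection is not a rational homology manifold in general, so the naive duality argument collapses, and the l.c.i.\ condition must be invoked through rectified homotopical depth and the stratified Morse theory behind the general Lefschetz theorem — this is where the content of the cited statement lies, the inductive reduction being essentially bookkeeping with genericity of the intermediate linear spaces.
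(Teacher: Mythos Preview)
The paper does not prove this statement at all: Theorem~\ref{lef} is quoted verbatim from \cite[Chapter 3.1. Theorem]{F} and used as a black box in the proof of Theorem~\ref{unobsqfano}, with no argument given. So there is no ``paper's own proof'' to compare against.

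Your sketch is a reasonable outline of the standard argument (induction on $d$ via a generic intermediate linear space, reducing to the hyperplane case, and then invoking affineness of the complement together with the rectified homotopical depth of an l.c.i.\ to run stratified Morse theory), and this is indeed the shape of the proof in Fulton's survey and the Hamm--L\^e work it draws on. The inductive step and the triple exact sequence are handled correctly. One small point: in the Bertini step you should be slightly careful that the generic $L'$ meets $X$ properly (so $\dim(X\cap L') = n-d+1$) and that $X\cap L'$ is again a \emph{variety} in the sense the hyperplane case needs; both are standard but worth a word. The final passage from homotopy vanishing to cohomology injectivity via relative Hurewicz and the long exact sequence is fine.
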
 

We also need the following lemma on flatness of some sheaf. 

\begin{lem}\label{iotaflatlem}
	Let $X$ be a $3$-fold with only terminal singularities 
	and $U$ its regular part with an open immersion $\iota \colon U \hookrightarrow X$. 
	Let $\xi_n:= (\mathcal{X}_n \rightarrow \Spec A_n)$ be a deformation of $X$ over $A_n= \mathbb{C}[t]/(t^{n+1})$ and 
	$\mathcal{U}_n \rightarrow \Spec A_n$ the deformation of $U$ induced by $\xi_n$. 
	
	Then the sheaf $\iota_*(\Omega^1_{\mathcal{U}_n/A_n} \otimes \omega_{\mathcal{U}_n/A_n})$ is flat over $A_n$. 
	Moreover, we have an isomorphism 
	\begin{equation}\label{basechangeisom}
	\left( \iota_* (\Omega^1_{\mathcal{U}_n/A_n} \otimes \omega_{\mathcal{U}_n/A_n}) \right) \otimes_{A_n} \mathbb{C} \simeq \iota_* (\Omega^1_U \otimes \omega_U)
	\end{equation}
	\end{lem}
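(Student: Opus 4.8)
The plan is to reduce the statement to a local question at each point of $Z := X \setminus U = \Sing X$ and then run an induction on $n$. Since $X$ has only terminal, hence isolated, singularities in dimension $3$, the set $Z$ is finite, so $\codim_X Z = 3$; and since terminal singularities are rational, hence Cohen--Macaulay, $\depth \mathcal{O}_{X,p} = 3$ for $p \in Z$, whence, by flatness of $f_n$, also $\depth \mathcal{O}_{\mathcal{X}_n, p} = 3$. In particular $\iota_* \mathcal{F}_n$ is coherent for $\mathcal{F}_n := \Omega^1_{\mathcal{U}_n/A_n} \otimes \omega_{\mathcal{U}_n/A_n}$. As $\mathcal{U}_n \to \Spec A_n$ is smooth of relative dimension $3$, $\mathcal{F}_n$ is locally free of rank $3$, its formation commutes with base change, and writing $A_{n-1} = A_n/(t^n)$ and $\mathcal{F}_{n-1} := \mathcal{F}_n \otimes_{A_n} A_{n-1} = \Omega^1_{\mathcal{U}_{n-1}/A_{n-1}} \otimes \omega_{\mathcal{U}_{n-1}/A_{n-1}}$, $\mathcal{F}_0 := \mathcal{F}_n \otimes_{A_n} \mathbb{C} = \Omega^1_U \otimes \omega_U$. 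Everything being local on $\mathcal{X}_n$, I would fix $p \in Z$, a small affine $\mathcal{V} \subset \mathcal{X}_n$ with $\mathcal{V} \cap Z = \{p\}$, and put $\mathcal{V}^\circ := \mathcal{V} \setminus \{p\} \subset \mathcal{U}_n$, so that $(\iota_* \mathcal{F}_m)_p = \Gamma(\mathcal{V}^\circ, \mathcal{F}_m)$.

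I would then induct on $n$, the case $n = 0$ being vacuous. Tensoring $0 \to (t^n) \to A_n \to A_{n-1} \to 0$, with $(t^n) \cong \mathbb{C}$, against the $A_n$-flat $\mathcal{F}_n$ gives a short exact sequence $0 \to \mathcal{F}_0 \to \mathcal{F}_n \to \mathcal{F}_{n-1} \to 0$ of sheaves on $\mathcal{U}_n$, and applying $\Gamma(\mathcal{V}^\circ, -)$ the exact sequence
\[
0 \to \Gamma(\mathcal{V}^\circ, \mathcal{F}_0) \to \Gamma(\mathcal{V}^\circ, \mathcal{F}_n) \xrightarrow{\ \rho\ } \Gamma(\mathcal{V}^\circ, \mathcal{F}_{n-1}) \xrightarrow{\ \partial\ } H^1(\mathcal{V}^\circ, \mathcal{F}_0).
\]
Since an $A_n$-module $N$ is flat precisely when $\ker(t^n \colon N \to N) = tN$, and here $\ker(t^n \colon \Gamma(\mathcal{V}^\circ,\mathcal{F}_n) \to \Gamma(\mathcal{V}^\circ,\mathcal{F}_n))$ is identified with $\Gamma(\mathcal{V}^\circ, \mathcal{F}_{n-1})$, flatness of $\iota_* \mathcal{F}_n$ over $A_n$ is equivalent to surjectivity of $\rho$, i.e.\ to $\partial = 0$. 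Granting $\partial = 0$, the sequence is short exact, $\Gamma(\mathcal{V}^\circ, \mathcal{F}_0) = \iota_*(\Omega^1_U \otimes \omega_U)_p$, and a standard argument with the local criterion of flatness over the Artinian ring $A_n$ (applied together with the inductive hypothesis for $\Gamma(\mathcal{V}^\circ, \mathcal{F}_{n-1})$, and keeping track inductively of the flatness of the $H^1$ terms) yields both that $\Gamma(\mathcal{V}^\circ, \mathcal{F}_n)$ is $A_n$-free and the base-change isomorphism \eqref{basechangeisom}.

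The crux is thus the vanishing of $\partial$. Using the inductive hypothesis to pass to the central fibre, $\partial$ becomes cup product with the local Kodaira--Spencer class on the map $\Gamma(\mathcal{V}_0^\circ, \Omega^1_U \otimes \omega_U) \to H^1(\mathcal{V}_0^\circ, \Omega^1_U \otimes \omega_U)$, where $\mathcal{V}_0^\circ = \mathcal{V}^\circ \cap U$; note that $R^1 \iota_* (\Omega^1_U \otimes \omega_U)$ need not vanish, so the vanishing of $\partial$ is not formal and is the genuine obstacle. To get it I would pass, as elsewhere in the paper, to the index-$1$ cover $\pi \colon (Y_p, q) \to (X_p, p)$ of the terminal singularity (\'etale over $\mathcal{V}_0^\circ$), reducing to a $\mathbb{Z}_r$-invariant statement on the punctured neighbourhood of the compound Du Val \emph{hypersurface} singularity $Y_p$; since $\omega_{Y_p}$ is then trivial, local (Grothendieck) duality on the $3$-dimensional Gorenstein ring $\mathcal{O}_{Y_p, q}$ identifies $H^1$ of the punctured neighbourhood with coefficients in $\Omega^1_{Y_p}\otimes\omega_{Y_p} \cong \Omega^1_{Y_p}$ with a local cohomology module governed by the Jacobian ideal of the defining equation, and the cup-product with the Kodaira--Spencer class is forced to vanish on the relevant sections by the unobstructedness of deformations of $Y_p$. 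An alternative route, likely what one actually carries out, is to identify $\iota_* \mathcal{F}_n \cong \mathcal{H}om_{\mathcal{O}_{\mathcal{X}_n}}(T_{\mathcal{X}_n/A_n}, \omega_{\mathcal{X}_n/A_n})$ (valid since $\omega_{\mathcal{X}_n/A_n}$ has depth $\ge 2$ along $Z$ and restricts to $\omega_{\mathcal{U}_n/A_n}$ on $\mathcal{U}_n$), and then, using that $\omega_{\mathcal{X}_n/A_n}$ is a relative dualizing sheaf of the morphism $f_n$ with Cohen--Macaulay fibres — hence flat over $A_n$ and compatible with base change — to deduce the flatness and base-change assertions for this $\mathcal{H}om$ from relative Grothendieck--Serre duality over $A_n$. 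The technical heart in either approach is ensuring the relevant duality is compatible with base change over the Artinian base $A_n$, which is precisely where the depth-$3$ hypothesis is essential.
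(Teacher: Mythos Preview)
Your reduction is sound: the problem is local, and surjectivity of $\rho$ (equivalently $\partial=0$) at each step of the induction is exactly what one must prove. This is also the heart of the paper's inductive diagram chase. The gap is in how you propose to establish $\partial=0$.

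In route (a) you pass to the index-one cover $Y_p$, which is right, but the subsequent identifications are off. The Jacobian ring of the defining equation computes $T^1_{(Y_p,q)}\simeq \Ext^1(\Omega^1_{Y_p},\mathcal{O}_{Y_p})$, not $H^1$ of the punctured neighbourhood with coefficients in $\Omega^1_{Y_p}$; these are different modules in general. And unobstructedness of the hypersurface singularity $Y_p$ concerns $T^2$, so it says nothing about the connecting map $\Gamma(\mathcal{V}_0^\circ,\Omega^1)\to H^1(\mathcal{V}_0^\circ,\Omega^1)$. That step does not go through.

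Route (b) is closer in spirit but, as stated, begs the question. On the cover one has $\omega_{\mathcal{Y}_n/A_n}\simeq\mathcal{O}_{\mathcal{Y}_n}$, so your sheaf becomes $(\Omega^1_{\mathcal{Y}_n/A_n})^{\vee\vee}$, and the assertion that relative duality gives flatness and base change amounts to knowing that $\Omega^1_{\mathcal{Y}_n/A_n}$ is already reflexive; invoking ``Grothendieck--Serre duality'' does not supply this.

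What the paper actually uses is precisely this missing input. On the index-one cover $\mathcal{Y}_n\to\Spec A_n$ (built from the $A_n$-flat sheaves $\omega^{[i]}_{\mathcal{X}_n/A_n}$), the morphism is relatively l.c.i., so $\Omega^1_{\mathcal{Y}_n/A_n}$ is \emph{automatically} flat over $A_n$. For the central fibre, $\Omega^1_Y$ is reflexive because $Y$ has an isolated hypersurface singularity (Kunz; Greb--Rollenske), hence $\iota_*\Omega^1_{Y'}\simeq\Omega^1_Y$. A five-lemma diagram chase then shows $\iota_*\Omega^1_{\mathcal{Y}'_n/A_n}\simeq\Omega^1_{\mathcal{Y}_n/A_n}$ for all $n$; in particular the restriction $r'_{n,n-1}$ is surjective, which is exactly your $\partial=0$ upstairs. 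Decomposing into $\mathbb{Z}_r$-eigenparts and taking the weight-one piece gives the flatness and the base-change isomorphism downstairs. So the mechanism is not duality or unobstructedness, but the combination of flatness of $\Omega^1$ for l.c.i.\ deformations with reflexivity of $\Omega^1$ for isolated complete intersection singularities.
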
  

\begin{proof}
	Since we can check this locally, we can assume $X$ is a Stein neighborhood of a singularity $p \in X$. 
	
	Let $\omega^{[i]}_{\mathcal{X}_n/A_n} := \iota_* \omega_{\mathcal{U}_n/A_n}^{\otimes i}$ and $r$ the Gorenstein index of $X$. 
	We see that $\omega^{[i]}_{\mathcal{X}_n/A_n}$ is flat over $A_n$ by \cite[Theorem 12]{kollarflat} since $\mathcal{O}_X(iK_X)$ is ${\rm S}_3$ 
	(cf.\ \cite[Corollary 5.25]{KM}). 
	The isomorphism $\omega^{[r]}_{\mathcal{X}_n/A_n} \simeq \mathcal{O}_{\mathcal{X}_n}$ determines a cyclic cover  
	\[
	\pi_n \colon \mathcal{Y}_n := \Spec_{\mathcal{O}_{\mathcal{X}_n}} \oplus_{i=0}^{r-1} \omega_{\mathcal{X}_n/A_n}^{[i]} \rightarrow \mathcal{X}_n   
	\]
	and we see that $\mathcal{Y}_n$ is flat over $A_n$. 
	We also see that $\Omega^1_{\mathcal{Y}_n/A_n}$ is flat over $A_n$ since $\mathcal{Y}_n \rightarrow \Spec A_n$ is relative l.c.i.\ morphism and generically smooth 
	(cf.\ \cite[Theorem D.2.7]{Sernesi}).

	Let $\mathcal{Y}'_n:= \pi_n^{-1}(\mathcal{U}_n)$. 
	Next we see the isomorphism 
	\begin{equation}\label{iotaisom}
	\iota_* \Omega^1_{\mathcal{Y}'_n/A_n} \simeq \Omega^1_{\mathcal{Y}_n/A_n}.   
	\end{equation}
	We show this by induction on $n$. The isomorphism for $n=0$ is known. (cf.\ \cite{Kunz}, \cite[Theorem 1.2]{GrebRoll}) 
	Assume we have the isomorphism for $i \le n-1$. 
	We have a commutative diagram 
	\[
	\xymatrix{
	0 \ar[r] & \Omega^1_Y \ar[r] \ar[d]^{\simeq} & \Omega^1_{\mathcal{Y}_n/A_n} \ar[r] \ar[d] & 
	\Omega^1_{\mathcal{Y}_{n-1}/A_{n-1}} \ar[r] \ar[d]^{\simeq} & 0 \\
	o \ar[r] & \iota_* \Omega^1_U \ar[r] & \iota_* \Omega^1_{\mathcal{U}_n /A_n} \ar[r]^{r'_{n,n-1}} & 
	\iota_* \Omega^1_{\mathcal{U}_{n-1} /A_{n-1}}.   &   
	}
	\]
	We see that $r'_{n,n-1}$ is surjective by the above diagram.   
	Thus we see that the vertical homomorphism in the middle is also an isomorphism. 
	Thus we obtain the required isomorphism (\ref{iotaisom}). 
	
	By the isomorphism (\ref{iotaisom}), we obtain an isomorphism 
	\[
	\Omega^1_{\mathcal{Y}_n/A_n} \simeq \iota_* \Omega^1_{\mathcal{Y}'_n/A_n} 
	\simeq \oplus_{i=0}^{r-1} \iota_* (\Omega^1_{\mathcal{U}_n/A_n} \otimes \omega_{\mathcal{U}_n/A_n}^{\otimes i}) 
	\]
	since $\pi_n|_{\mathcal{Y}'_n} \colon \mathcal{Y}'_n \rightarrow \mathcal{U}_n$ is \'{e}tale. 
	Since $\Omega^1_{\mathcal{Y}_n/A_n}$ is flat over $A_n$ and the direct summand of a flat module is again flat, 
	we see that the sheaf $\iota_*(\Omega^1_{\mathcal{U}_n/A_n} \otimes \omega_{\mathcal{U}_n/A_n})$ is flat over $A_n$. 
	
	Now we check the isomorphism (\ref{basechangeisom}). We can also check this locally 
	and may assume that $X$ is Stein. We use the same notations as above. 
	Let $\pi \colon Y:= \mathcal{Y}_n \otimes_{A_n} \mathbb{C} \rightarrow X$ be the index one cover induced by $\pi_n$ and  
	$Y':= \pi^{-1}(U)  \subset Y$. We have isomorphisms
	\[
	\Omega^1_{\mathcal{Y}_n/A_n} \otimes_{A_n} \mathbb{C} \simeq \Omega^1_Y \simeq \iota_* \Omega^1_{Y'} \simeq 
	\oplus_{i=0}^{r-1} \iota_* (\Omega^1_{U} \otimes \omega_U^{\otimes i}), 
	\]
	\[
	\Omega^1_{\mathcal{Y}_n/A_n} \otimes_{A_n} \mathbb{C} \simeq 
	\oplus_{i=0}^{r-1} \left(\iota_* (\Omega^1_{\mathcal{U}_n/A_n} \otimes \omega_{\mathcal{U}_n/A_n}^{\otimes i})  \right) \otimes_{A_n} \mathbb{C}. 
	\]
	Comparing the $\mathbb{Z}_r$-eigenparts for $i=1$, we obtain the required isomorphism. 
	 
	Thus we finish the proof of Lemma \ref{iotaflatlem}. 
	\end{proof}

By using the obstruction class in Proposition \ref{obs}, we can show the following theorem. 

\begin{thm}\label{unobsqfano}
Let $X$ be a $\mathbb{Q}$-Fano $3$-fold. Then  
deformations of $X$ are unobstructed. 
\end{thm}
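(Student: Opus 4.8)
The plan is to invoke Proposition~\ref{enough}, so that it is enough to show the restriction $\Def_X(A_{n+1}) \to \Def_X(A_n)$ is surjective for every $n \ge 0$, where $A_n = \mathbb{C}[t]/(t^{n+1})$. Fix a deformation $\xi_n = (\mathcal{X}_n \to \Spec A_n)$ of $X$; I must lift it over $A_{n+1}$. Let $U \subset X$ be the regular locus, with open immersion $\iota \colon U \hookrightarrow X$. As $X$ is a $3$-fold with only terminal singularities, $X \setminus U = \Sing X$ is a finite set of points ($X$ is smooth in codimension $2$, as is any terminal variety) and $X$ is Cohen--Macaulay (terminal implies rational, hence Cohen--Macaulay), so $\depth \mathcal{O}_{X,\mathfrak{p}} = 3$ for all $\mathfrak{p} \in X \setminus U$; hence Proposition~\ref{obs} applies with this $U$, and the obstruction to lifting $\xi_n$ over $A_{n+1}$ is a class $o_{\xi_n} \in \Ext^2_{\mathcal{O}_U}(\Omega^1_U, \mathcal{O}_U)$, built from the short exact sequences~(\ref{conormal}) and~(\ref{cot}) on the total space $\mathcal{U}_n$ of the induced deformation of $U$. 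The goal is $o_{\xi_n} = 0$. As the Introduction stresses, one cannot expect the ambient group $\Ext^2_{\mathcal{O}_U}(\Omega^1_U, \mathcal{O}_U) = H^2(U, T_U)$ to vanish, so the class itself must be transported to a more favourable place.

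The transport proceeds by twisting with the relative dualizing sheaf and pushing forward to $X$. On $\mathcal{U}_n$ the sheaf $\omega_{\mathcal{U}_n/A_n}$ is invertible, so tensoring~(\ref{conormal}) and~(\ref{cot}) with it and using Lemma~\ref{extisom} realizes $o_{\xi_n}$ as a class $o_{\xi_n}' \in \Ext^2_{\mathcal{O}_{\mathcal{U}_n}}\bigl(\Omega^1_{\mathcal{U}_n/A_n} \otimes \omega_{\mathcal{U}_n/A_n},\, \omega_U\bigr)$. Now apply $\iota_*$ to the two twisted short exact sequences. Each remains exact: for the first because $R^1\iota_*\omega_U = 0$, as $\iota_*\omega_U = \omega_X$ is the dualizing sheaf of the Cohen--Macaulay $3$-fold $X$, hence maximal Cohen--Macaulay; for the second because $R^1\iota_*\bigl(f_{\mathcal{U}_n}^*\Omega^1_{\Spec A_n/k} \otimes \omega_{\mathcal{U}_n/A_n}\bigr) = 0$, which follows by dévissage (as in Claim~\ref{R1O}) from the previous vanishing, $\Omega^1_{\Spec A_n/k}$ being a finite $A_n$-module. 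Splicing the two pushed-forward sequences produces an honest two-term extension on $\mathcal{X}_n$, hence a class $\bar{o}_{\xi_n} \in \Ext^2_{\mathcal{O}_{\mathcal{X}_n}}\bigl(\iota_*(\Omega^1_{\mathcal{U}_n/A_n} \otimes \omega_{\mathcal{U}_n/A_n}),\, \omega_X\bigr)$ restricting along $\iota$ to $o_{\xi_n}'$; since $\iota^*\iota_* = \mathrm{id}$ the pushforward is injective on classes of this kind, so it is enough to prove $\bar{o}_{\xi_n} = 0$. By Lemma~\ref{iotaflatlem} the sheaf $\iota_*(\Omega^1_{\mathcal{U}_n/A_n} \otimes \omega_{\mathcal{U}_n/A_n})$ is flat over $A_n$ with central fibre $\iota_*(\Omega^1_U \otimes \omega_U)$, by~(\ref{basechangeisom}); so Lemma~\ref{extisom}, applied to this $A_n$-flat sheaf and the $\mathcal{O}_X$-module $\omega_X$, gives $\Ext^2_{\mathcal{O}_{\mathcal{X}_n}}\bigl(\iota_*(\Omega^1_{\mathcal{U}_n/A_n} \otimes \omega_{\mathcal{U}_n/A_n}), \omega_X\bigr) \cong \Ext^2_{\mathcal{O}_X}(\iota_*(\Omega^1_U \otimes \omega_U), \omega_X)$, reducing the whole problem to the vanishing of this last group.

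Here carrying the $\omega$-twist along pays off, because the target sheaf is now $\omega_X$: as $X$ is projective and Cohen--Macaulay with dualizing sheaf $\omega_X$, Serre duality gives $\Ext^2_{\mathcal{O}_X}(\iota_*(\Omega^1_U \otimes \omega_U), \omega_X) \cong H^1(X, \iota_*(\Omega^1_U \otimes \omega_U))^*$ (had $\mathcal{O}_X$ remained the target this step would fail, $-K_X$ not being Cartier). Since $\iota_*(\Omega^1_U \otimes \omega_U)$ is reflexive, hence $S_2$, and $X \setminus U$ is finite, $H^1(X, \iota_*(\Omega^1_U \otimes \omega_U)) \hookrightarrow H^1(U, \Omega^1_U \otimes \omega_U)$; and $\omega_U = \mathcal{O}_X(K_X)|_U$ with $-K_X$ ample, so an Akizuki--Nakano type vanishing --- available here because $-K_X$ is ample and $X$ has only rational (terminal) singularities, for instance via a resolution and the logarithmic vanishing theorem, or via the Du Bois complex, or by cutting $X$ with general members of $|{-}mK_X|$ and iterating using Theorem~\ref{lef} --- yields $H^1(U, \Omega^1_U \otimes \omega_U) = 0$. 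Hence $\bar{o}_{\xi_n} = 0$, so $o_{\xi_n} = 0$, the deformation $\xi_n$ lifts over $A_{n+1}$, and by Proposition~\ref{enough} deformations of $X$ are unobstructed.

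I expect the main obstacles to be twofold. First, making the transport of the class fully rigorous: verifying that the twisted sequences push forward exactly, keeping careful track that all the relevant $R^1\iota_*$'s vanish (including those of the auxiliary terms coming from $\Omega^1_{\mathcal{U}_n/k}$), and that Lemma~\ref{extisom} genuinely applies thanks to the flatness supplied by Lemma~\ref{iotaflatlem} --- this is exactly why the index-one (cDV, hence hypersurface) covers of terminal $3$-fold points intervene, via Lemma~\ref{iotaflatlem}. Second, and I think more seriously, the concluding Akizuki--Nakano vanishing for $\Omega^1$ twisted down by $-K_X$ on the open $3$-fold $U$: this is the one genuinely geometric input special to $\mathbb{Q}$-Fano threefolds, the rest being formal homological algebra once flatness, reflexivity and Serre duality are in hand.
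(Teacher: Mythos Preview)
Your reduction to the vanishing of $H^1(X,\iota_*(\Omega^1_U\otimes\omega_U))$ is exactly the paper's, and you have handled the homological bookkeeping (twisting by $\omega_{\mathcal{U}_n/A_n}$, pushing forward, invoking Lemmas~\ref{extisom} and~\ref{iotaflatlem}, then Serre duality) correctly and in the same order as the paper. Up to that point the proposal and the paper coincide.

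The gap is in your final paragraph. Passing to the open set is a wrong turn: the injection $H^1(X,\iota_*(\Omega^1_U\otimes\omega_U))\hookrightarrow H^1(U,\Omega^1_U\otimes\omega_U)$ is valid, but the target is cohomology on a non-proper variety and there is no Akizuki--Nakano statement that kills it. Indeed $R^1\iota_*(\Omega^1_U\otimes\omega_U)$ has no reason to vanish here (locally it is a summand of $R^1\iota_*\Omega^1_{V'}$ for the index-one cover $V$, and $\Omega^1_V$ for an isolated $3$-fold hypersurface singularity has depth only $\ge 2$ in general), so $H^1(U,\cdot)$ can be strictly larger than $H^1(X,\iota_*(\cdot))$. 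None of the mechanisms you list---logarithmic vanishing on a resolution, Du~Bois complexes, iterated hyperplane sections---produces the vanishing of $H^1(U,\Omega^1_U\otimes\omega_U)$, and you should not aim for it.

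What the paper actually does for $H^1(X,\iota_*(\Omega^1_U\otimes\omega_U))=0$ is neither a one-line vanishing theorem nor a restriction argument, but a passage to a Gorenstein cover. One takes $m\gg 0$ with $-mK_X$ very ample and a smooth $D_m\in|{-}mK_X|$ disjoint from $\Sing X$, forms the cyclic cover $\pi_m\colon Y_m\to X$ branched along $D_m$ (so $Y_m$ is terminal Gorenstein), and observes via the residue sequence that $H^1(X,\iota_*(\Omega^1_U\otimes\omega_U))$ injects into $H^1(X,\iota_*(\Omega^1_U(\log D_m)\otimes\omega_U))$, which in turn is a $\mathbb{Z}_m$-eigensummand of $H^1(Y_m,\Omega^1_{Y_m}(\log D')(-D'))$ for $D'=\pi_m^{-1}(D_m)$. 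The vanishing of this last group is then extracted from the sequence $0\to\Omega^1_{Y_m}(\log D')(-D')\to\Omega^1_{Y_m}\to\Omega^1_{D'}\to 0$ by showing that $H^0(D',\Omega^1_{D'})=0$ (Hodge symmetry on the smooth surface $D'$) and that the restriction $H^1(Y_m,\Omega^1_{Y_m})\to H^1(D',\Omega^1_{D'})$ is injective; the latter uses the Lefschetz statement Theorem~\ref{lef} applied to $Y_m$ (which is l.c.i.) together with a comparison of $\Pic\otimes\mathbb{C}$ and $H^2(\cdot,\mathbb{C})$. Your phrase ``cutting with members of $|{-}mK_X|$ and iterating using Theorem~\ref{lef}'' is pointed in the right direction, but the actual device is a branched cover along such a member, not a restriction to it, and the Lefschetz theorem enters only after one is on the Gorenstein cover $Y_m$.
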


\begin{proof}
Let $U$ be the smooth part of $X$. Note that $\codim_X X \setminus U \ge 3$ and $X$ is Cohen-Macaulay    
since $X$ has only terminal singularities. Hence $X$ and $U$ satisfy the assumption of Proposition \ref{obs}. 
Set $k:= \mathbb{C}$. 

Let $\xi_n \in \Def_X (A_n)$ be a deformation of $X$ 
\[
f_n \colon \mathcal{X}_n \rightarrow \Spec A_n
\] 
and 
$o_{\xi_n} \in \Ext^2(\Omega^1_U,\mathcal{O}_U)$ the obstruction class defined in the proof of Proposition \ref{obs}. 
We show that $o_{\xi_n}=0$ in the following. 

Let $\omega_X$ be the dualizing sheaf on $X$. 
By taking the tensor product of the sequence (\ref{obsseq}) with the relative dualizing sheaf 
$\omega_{\mathcal{U}_n/\Spec A_n}$ of $f_{\mathcal{U}_n}$, we have an exact sequence
 \begin{multline}\label{wobsU}
 0 \rightarrow \omega_U \rightarrow f_{\mathcal{U}_n}^{\ast} (\Omega^1_{\Spec A_{n+1}/k}|_{\Spec A_n}) 
\otimes \omega_{\mathcal{U}_n/\Spec A_n} \\
 \rightarrow \Omega^1_{\mathcal{U}_n/k} \otimes \omega_{\mathcal{U}_n/\Spec A_n} \rightarrow 
\Omega^1_{\mathcal{U}_n/\Spec A_n} \otimes \omega_{\mathcal{U}_n/ \Spec A_n} \rightarrow 0.  
 \end{multline}

By taking $\iota_{\ast}$ of the above sequence, we get a sequence 
\begin{multline}\label{wobsX}
0 \rightarrow \omega_X \rightarrow \iota_{\ast} (f_{\mathcal{U}_n}^* \Omega^1_{\Spec A_{n+1}/k}|_{\Spec A_n} 
\otimes \omega_{\mathcal{U}_n/\Spec A_n}) \\
 \rightarrow \iota_* (\Omega^1_{\mathcal{U}_n/k} \otimes \omega_{\mathcal{U}_n/ \Spec A_n})
 \rightarrow \iota_{\ast} (\Omega^1_{\mathcal{U}_n/\Spec A_n} \otimes \omega_{\mathcal{U}_n/ \Spec A_n}) \rightarrow 0.  
\end{multline}
This sequence is exact by the following claim. 

\begin{claim}\label{omegaR1}  
\begin{enumerate}
\item[(i)] $R^1 \iota_{\ast} \omega_U =0$. 
\item[(ii)] $R^1 \iota_{\ast}( f_{\mathcal{U}_n}^{\ast}\Omega^1_{\Spec A_n/k} \otimes \omega_{\mathcal{U}_n/ \Spec A_n}) =0$.  
\end{enumerate}
\end{claim}

\begin{proof}[Proof of Claim]
\noindent(i) Let $p \in X \setminus U$ be a singular point and $U_p$ a small affine neighborhood at $p$. 
It is enough to show that $H^2_p (U_p, \omega_{U_p}) =0$. 
Let $\pi_p \colon V_p \rightarrow U_p$ be the index 1 cover of $U_p$. Then we have 
$(\pi_p)_* \mathcal{O}_{V_p} \simeq \oplus_{i=0}^{r-1} \mathcal{O}_{U_p}(iK_{U_p})$ where $r$ is the index of 
the singularity $p \in X$. Hence 
\[
H^2_q (V_p, \mathcal{O}_{V_p} ) \simeq \bigoplus_{i=0}^{r-1} H^2_p(U_p, \mathcal{O}_{U_p}(i K_{U_p})), 
\] 
where $q:= \pi^{-1}(p)$. 
  L.H.S. is zero by the same argument as in Claim \ref{R1O} since  ${\rm depth}_q \mathcal{O}_{V_p,q} = 3$. Hence we proved the first claim. 

\noindent (ii) Let $f_{(n,p)} \colon \mathcal{U}_{(n,p)} \rightarrow \Spec A_n$ be the deformation of $U_p$ induced from $f_n$. 
It is enough to show that 
\[
H^2_p (\mathcal{U}_{(n,p)},  f_{(n,p)}^* \Omega^1_{\Spec A_n / k} \otimes \omega_{\mathcal{U}_{(n,p)} / A_n}) =0 .
\]
Set $\omega^{[i]}_{\mathcal{U}_{(n,p)} / A_n } := \iota_* \omega^{\otimes i}_{{\mathcal{U}'}_{(n,p)} / A_n }$, where 
$\mathcal{U}'_{(n,p)} := \mathcal{U}_{(n,p)} \setminus \{p \}$.  
We can take an index 1 cover 
$\phi_{(n,p)} \colon \mathcal{V}_{(n,p)} \rightarrow \mathcal{U}_{(n,p)}$ which is determined by an isomorphism 
$\omega^{[r_p]}_{\mathcal{U}_{(n,p)} / A_n } \simeq \mathcal{O}_{\mathcal{U}_{(n,p)}}$, where $r_p$ is the Gorenstein index of $U_p$. 
Set $g_{(n,p)}:= f_{(n,p)} \circ \phi_{(n,p)}$.   
 Note that 
\[
(\phi_{(n,p)})_*(g_{(n,p)}^* \Omega^1_{\Spec A_n / k}) \simeq 
\bigoplus_{i=0}^{r-1} f_{(n,p)}^* \Omega^1_{\Spec A_n / k} \otimes \omega^{[i]}_{\mathcal{U}_{(n,p)} / A_n }.
\] 
We can see that $H^2_p (\mathcal{U}_{(n,p)},  f_{(n,p)}^* \Omega^1_{\Spec A_n / k} \otimes \omega_{\mathcal{U}_{(n,p)} / A_n})$ 
is a direct summand of 
\[
H^2_q(\mathcal{V}_{(n,p)}, g_{(n,p)}^* \Omega^1_{\Spec A_n /k}) \simeq H^2_q(\mathcal{V}_{(n-1,p)}, \mathcal{O}_{\mathcal{V}_{(n-1,p)}}) 
\]
and this is zero by Claim \ref{R1O}(ii).

\end{proof}

Since the sheaf $\iota_{\ast}(\Omega^1_{\mathcal{U}_n/ A_n} \otimes \omega_{\mathcal{U}_n/A_n})$ is flat over $A_n$ by Lemma \ref{iotaflatlem}, 
we have an isomorphism 
\[
 \Ext^2_{\mathcal{O}_{\mathcal{X}_n}} (\iota_{\ast}(\Omega^1_{\mathcal{U}_n/ A_n} \otimes \omega_{\mathcal{U}_n/A_n}), \omega_X ) \simeq
  \Ext^2_{\mathcal{O}_{X}} (\iota_{\ast}(\Omega^1_U \otimes \omega_U), \omega_X )
\]
by Lemma \ref{extisom}. 
By using this isomorphism, we define $o'_{\xi_n} \in \Ext^2_{\mathcal{O}_X}(\iota_{\ast}(\Omega^1_U \otimes \omega_U), \omega_X )$ to be the element corresponding 
to the sequence (\ref{wobsX}). 
Let $r_2 \colon \Ext^2_{\mathcal{O}_X}(\iota_{\ast}(\Omega^1_U \otimes \omega_U), \omega_X ) 
\rightarrow \Ext^2_{\mathcal{O}_U}( \Omega^1_U \otimes \omega_U, \omega_U)$ be the natural restriction map and 
 $T \colon \Ext^2_{\mathcal{O}_U}( \Omega^1_U \otimes \omega_U, \omega_U)
  \rightarrow \Ext^2_{\mathcal{O}_U}(\Omega^1_U,\mathcal{O}_U)$ be the map induced by tensoring $\omega_U^{-1} $. 
Then we have 
\[
T(r_2(o'_{\xi_n})) = o_{\xi_n}. 
\] 
Hence it is enough to show that $\Ext^2_{\mathcal{O}_X}(\iota_{\ast} (\Omega^1_U \otimes \omega_U),\omega_X)=0$. 
By the Serre duality, we have $\Ext^2_{\mathcal{O}_X}(\iota_{\ast} (\Omega^1_U \otimes \omega_U),\omega_X)^{\ast} \simeq 
H^1(X, \iota_{\ast} (\Omega^1_U \otimes \omega_U))$, where ${ }^*$ is the dual.

In the following, we show that \[
H^1(X, \iota_* (\Omega^1_U \otimes \omega_U)) =0.
\] 
Let $m$ be a positive integer such that $-m K_X$ is very ample and $| {-} m K_X |$ contains a smooth member $D_m$ which 
is disjoint with the singular points of $X$. Let $\pi_m \colon Y_m:= \Spec \oplus_{i=0}^{m-1} \mathcal{O}_X(i K_X) \rightarrow X$ 
be a cyclic cover determined by $D_m$. Note that $Y_m$ has only terminal Gorenstein singularities. 

There is the residue exact sequence 
\[
0 \rightarrow \Omega^1_U \rightarrow \Omega^1_U(\log D_m) \rightarrow \mathcal{O}_{D_m} \rightarrow 0
\]
By tensoring this sequence with $\omega_U$ and taking the push-forward of the sheaves by $\iota$, we obtain an exact sequence 
\[
0 \rightarrow \iota_* (\Omega^1_U \otimes \omega_U) \rightarrow \iota_* (\Omega^1_U (\log D_m) \otimes \omega_U) 
\rightarrow \iota_* (\omega_U |_{D_m}). 
\]
The last homomorphism is surjective and $\iota_*( \omega_U |_{D_m}) \simeq \omega_X|_{D_m}$ since $ \iota_* (\omega_U |_{D_m})$ is supported on $D_m \subset U$. 
Hence we obtain an exact sequence 
\begin{equation}
0 \rightarrow \iota_* (\Omega^1_U \otimes \omega_U) \rightarrow \iota_* (\Omega^1_U (\log D_m) \otimes \omega_U) 
\rightarrow \omega_X |_{D_m} \rightarrow 0
\end{equation}
 It induces an exact sequence 
\[
H^0 (X, \omega_X|_{D_m} ) \rightarrow H^1(X, \iota_* (\Omega^1_U \otimes \omega_U) ) \rightarrow 
H^1(X, \iota_* (\Omega^1_U(\log D_m) \otimes \omega_U) ). 
\]
We have $H^0 (X, \omega_X|_{D_m} ) =0 $ since $-K_X$ is ample. Therefore, it is enough to show that 
\[
H^1(X, \iota_* (\Omega^1_U(\log D_m) \otimes \omega_U) )=0.
\] 
Put $D' := \pi_m^{-1}(D_m)$ which satisfies that $D' \simeq D_m$ and $\pi_m^* D_m = m D'$. 
By using the isomorphism 
\[
(\pi_m)_{\ast}\left( \Omega^1_{Y_m}(\log D')(-D') \right) \simeq 
\bigoplus_{i=0}^{m-1} \iota_* \left( \Omega^1_U( \log D_m) \otimes \mathcal{O}_U\left( \left( i+1 \right) K_U \right)\right), 
\] 
we can see that $H^1(X, \iota_* (\Omega^1_U(\log D_m) \otimes \omega_U) )$ is a direct summand of 
\[
H^1 (Y_m, \Omega^1_{Y_m}(\log D')(-D')).
\]
We can show that 
\[
H^1 (Y_m, \Omega^1_{Y_m}(\log D')(-D'))=0
\]
as follows. There is an exact sequence 
\[
0 \rightarrow \Omega^1_{Y_m}(\log D')(-D') \rightarrow \Omega^1_{Y_m} \rightarrow \Omega^1_{D'} \rightarrow 0
\]  
and it induces an exact sequence 
\[
H^0(D', \Omega^1_{D'}) \rightarrow H^1(Y_m, \Omega^1_{Y_m}(\log D')(-D') ) \rightarrow 
H^1(Y_m,\Omega^1_{Y_m}) \stackrel{\beta}{\rightarrow} H^1(D', \Omega^1_{D'}).
\]
We can see that $H^1( D', \mathcal{O}_{D'}) =0$ since $D_m \simeq D'$ and we have an exact sequence 
\[
0 \rightarrow \mathcal{O}_X (-D_m) \rightarrow \mathcal{O}_X \rightarrow \mathcal{O}_{D_m} \rightarrow 0.  
\] 
This and the Hodge symmetry imply $H^0(D',\Omega^1_{D'})=0$. 
Hence it is enough to show that $\beta$ is injective. 
We use the following commutative diagram 
\[
\xymatrix{
H^1(Y_m, \Omega^1_{Y_m}) \ar[r]^{\beta} & H^1(D', \Omega^1_{D'}) \\
      H^1(Y_m, \mathcal{O}_{Y_m}^*) \ar[u]^{\phi} \ar[d]^{\beta_1} \otimes \mathbb{C} \ar[r]^{\gamma} 
& H^1(D', \mathcal{O}_{D'}^*) \otimes \mathbb{C} \ar[u]^{\psi} \ar[d]^{\beta_2} \\
     H^2(Y_m, \mathbb{C}) \ar[r]^{\delta} & H^2(D', \mathbb{C}).
   }
\]
We can see that $\delta$ is injective by Theorem \ref{lef} since $Y_m$ 
 has only l.c.i. singularities. Note that $\beta_1$ is an isomorphism since 
$H^i(Y_m,\mathcal{O}_{Y_m})= 0 $ for $i =1,2$. Hence $\delta \circ \beta_1 = \beta_2 \circ \gamma$ is injective.  
This implies that $\gamma$ is injective. We can show that $\phi$ is surjective by an argument 
which is similar to that in \cite[(2.2)]{Namtop}. Note that $\psi$ is injective since $D'$ is a smooth surface 
and $H^1(D', \mathcal{O}_{D'})=0$. Hence $\psi \circ \gamma = \beta \circ \phi$ is injective. 
Therefore $\beta$ is injective. 

Hence we proved $o_{\xi_n} =0$. It is enough for unobstructedness by Proposition \ref{enough} since $X$ is a projective variety 
and has a semi-universal deformation space.   
\end{proof}

\begin{rem}
For a Fano $3$-fold $X$ with canonical singularities, its Kuranishi space $\Def(X)$ is not smooth in general. 
For example, let $X$ be a cone over the del Pezzo surface of degree $6$. Then $X$ has 2 different smoothings $\mathbb{P}^1 \times \mathbb{P}^1 \times \mathbb{P}^1$ 
and $\mathbb{P}(\Theta_{\mathbb{P}^2})$ in Grothendieck's notation, where $\Theta_{\mathbb{P}^2}$ is the tangent sheaf.  
\end{rem}

\section{A $\mathbb{Q}$-smoothing of a $\mathbb{Q}$-Fano 3-fold: the ordinary case}\label{qsmqfano3sect}

\subsection{Stratification on the Kuranishi space of a singularity}\label{stratif}
First, we recall a stratification on the Kuranishi space of an isolated singularity introduced in the proof of   \cite[Theorem 2.4]{NamSt}. 

Let $V$ be a Stein space with an isolated hypersurface singularity $p \in V$. Then we have its semi-universal deformation space $\Def(V)$ and the semi-universal family 
$\mathcal{V} \rightarrow \Def(V)$.  
It has a stratification into Zariski locally closed and 
smooth subsets $S_k \subset \Def (V)$ for $k \ge 0$ with the following properties; 
\begin{itemize}
\item $\Def(V) =  \amalg_{k \ge 0} S_k$.  
\item $S_0$ is a non-empty Zariski open subset of $\Def(V)$ and $\mathcal{V}$ is smooth
over $S_0$. 
\item $S_k$ are of pure codimension in $\Def(V)$ for all $k > 0$ and $\codim_{ \Def(V)}
S_k < \codim_{ \Def(V)} S_{k+1}$. 
\item If $k> l$, then $\overline{S_k} \cap S_l = \emptyset$. 
\item $\mathcal{V}$ has a simultaneous resolution on each $S_k$, that is, there is a resolution of 
$\mathcal{V} \times_{\Def(V)} S_k$ which is smooth over $S_k$. 
\end{itemize}

\subsection{A useful homomorphism between cohomology groups}\label{useful}

Let us explain the homomorphism which we need for finding $\mathbb{Q}$-smoothings. 
Let $p \in U$ be a $3$-fold Stein neighborhood of a terminal singularity $p$ of index $r$, that is, 
$r$ is the minimal positive integer such that $rK_U$ is Cartier.  
Fix a positive integer $m$ such that $r | m$. Let  
\[
\pi_U \colon  V:= \Spec \bigoplus_{i=0}^{m-1} \mathcal{O}(i K_U) \rightarrow U
\] 
be the finite morphism defined by the isomorphism $\mathcal{O}_U(r K_U) \simeq \mathcal{O}_U$. Note that $V$ is a disjoint union of 
several copies of the index 1 cover of $U$. 
Let $G:= \mathbb{Z}/ m \mathbb{Z}$ be the Galois group of $\pi_U$.  
Set $Q := \pi_U^{-1}(p)$. 

We consider the case $m=r$ to explain the ordinariness of a terminal singularity. 
In this case, $V$ is called the {\it index one cover} of $U$. The germ $(V, Q)$ is a germ of a terminal Gorenstein singularity
and it is known that $(V,Q)$ is a cDV singularity and that $(V,Q)$ is a hypersurface in the germ $(\mathbb{C}^4,0)$.  
We can embed $(V,Q)$ in $(\mathbb{C}^4, 0)$ in such a way that the $\mathbb{Z}_r$-action on $(V,Q)$ extends to a $\mathbb{Z}_r$-action 
on $(\mathbb{C}^4,0)$. Moreover, we may assume that $(V,Q)$ is a hypersurface defined by a $\mathbb{Z}_r$-semi-invariant function $f_V$.  
 Let $\zeta_U \in \mathbb{C}$ be the eigenvalue of the action on $f_V$, that is, $\zeta_U$ satisfies that 
 $g \cdot f_V = \zeta_U f_V$, where $g \in G$ is the generator.   
We have the following fact by the classification of $3$-fold 
terminal singularities by Reid and Mori. 

\begin{fact}
Let $(U,p)$ be a germ of a $3$-fold terminal singularity. Then $\zeta_U$ is $1$ or $-1$. 
\end{fact}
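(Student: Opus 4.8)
The plan is to prove the Fact by going through the Reid--Mori classification of $3$-fold terminal singularities and checking the claim family by family. First I would recall that a $3$-fold terminal singularity of index $r$ has an index one cover $(V,Q)$ which is a cDV point, hence a hypersurface singularity in $(\mathbb{C}^4,0)$ defined by some $f_V$, and that one can choose coordinates $x_1,x_2,x_3,x_4$ on $\mathbb{C}^4$ in which the $\mathbb{Z}_r$-action is diagonal, say with weights $\frac{1}{r}(a_1,a_2,a_3,a_4)$, and $f_V$ is semi-invariant. The eigenvalue $\zeta_U$ is then a primitive-or-not $r$-th root of unity, namely $\zeta_U = \mu^{w}$ where $\mu=e^{2\pi i/r}$ and $w$ is the weight of $f_V$ modulo $r$. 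So the content of the Fact is that $w\equiv 0$ or $w\equiv r/2 \pmod r$ in all cases.

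The key step is simply to read off $w$ from the normal form of each family in the Mori--Reid list (the same list referenced in Remark \ref{ordrem}, namely \cite[Theorem 12]{mori} or \cite[(6.1)]{YPG}). For each of the families --- the ``ordinary'' ones $cA/r$, $cAx/2$, $cAx/4$, $cD/2$, $cD/3$, $cE/2$, and the exceptional $cA/4$-type family of Gorenstein index $4$ --- one has an explicit semi-invariant equation with prescribed action weights on the coordinates. I would tabulate, for each family, the coordinate weights $\frac{1}{r}(a_1,\dots,a_4)$ and the weight $w$ of the defining polynomial, and observe: for all families of index $r$ odd (the $cA/r$ with $r$ odd and $cD/3$) the equation is actually invariant, so $w\equiv 0$ and $\zeta_U=1$; for the families of even index ($cA/r$ with $r$ even, $cAx/2$, $cAx/4$, $cD/2$, $cE/2$, and the exceptional index-$4$ family) the defining polynomial turns out to be either invariant ($w\equiv 0$, $\zeta_U=1$) or anti-invariant ($w\equiv r/2$, hence $\zeta_U^2=1$ and $\zeta_U=-1$ since $\zeta_U\neq 1$). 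In particular $\zeta_U\in\{1,-1\}$ always. (Concretely: the ordinary families are by definition those where $f_V$ is $\mathbb{Z}_r$-invariant, giving $\zeta_U=1$; the one exceptional family has $r=4$ and the equation is semi-invariant of weight $2 \equiv r/2$, giving $\zeta_U = -1$.)

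The main obstacle --- really the only subtle point --- is the exceptional Gorenstein index $4$ family, for which $f_V$ is genuinely only semi-invariant rather than invariant; here one must verify that the weight of the defining equation is exactly $2$ modulo $4$ and not, say, $1$ or $3$. This follows because $f_V$ must be a sum of monomials all of the same weight modulo $4$, and the terminal condition (the singularity being an isolated cDV point with the prescribed blow-up behaviour) forces the admissible monomials to have even weight; since the cover is a nontrivial Gorenstein singularity the weight cannot be $0 \pmod 4$, leaving weight $2$. Thus $g\cdot f_V = (-1) f_V$ and $\zeta_U = -1$. Combining this with the trivial check for the ordinary families completes the proof; I would also remark that conceptually $\zeta_U$ measures how $K_U$ interacts with the local class group, and the dichotomy $\zeta_U=\pm 1$ reflects that $2K_V$ descends with trivial or order-two twist, but the clean justification is the case analysis above.
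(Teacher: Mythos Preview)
Your approach is exactly what the paper does: it states this as a Fact following immediately from the Reid--Mori classification \cite{mori}, \cite[(6.1)]{YPG}, without writing out any argument. So the case-by-case verification against the classification table is the intended, and only realistic, route.

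Two small corrections. First, your list of families double-counts: the $cAx/4$ family \emph{is} the exceptional index-$4$ family, not an additional ordinary one. The six families are $cA/r$, $cAx/2$, $cD/2$, $cD/3$, $cE/2$ (all ordinary, with $f_V$ invariant) and $cAx/4$ (exceptional, $r=4$, weight $2$). Second, your paragraph trying to argue a priori that the exceptional equation must have weight $2 \pmod 4$ (``the terminal condition forces the admissible monomials to have even weight; the weight cannot be $0$, leaving $2$'') is not a proof: you do not derive this from first principles, and the claim that the weight cannot be $0$ because ``the cover is a nontrivial Gorenstein singularity'' is a non sequitur. The honest justification is simply to read the weight column directly from Reid's table \cite[(6.1)]{YPG}, where the action is given as $\frac{1}{4}(1,1,3,2)$ on $(x,y,z,u)$ with $f_V$ of weight $2$, hence $\zeta_U = i^2 = -1$. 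Drop the conceptual aside at the end as well; it does not add anything rigorous.
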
 
By this fact, we introduce the following notions on terminal singularities. 

\begin{defn}
Let $(U,p)$ be a germ of $3$-fold terminal singularity. 
We say that $(U,p)$ is {\it ordinary} (resp.\ {\it exceptional}) if $\zeta_U =1$ (resp.\ $\zeta_U =-1$).  
\end{defn}

Now we go back to general $m$ which is some multiple of $r$. 
Let $\nu_V \colon \tilde{V} \rightarrow V$ be a $G$-equivariant good resolution, $F_V:= \nu_V^{-1}(Q)= \Exc (\nu_V) $ its exceptional locus 
which has normal crossing support  
and $\tilde{U} := \tilde{V}/ G$ the quotient. So we have a diagram 

\begin{equation}\label{localdiag1}
\xymatrix{
      \tilde{V} \ar[r]^{\tilde{\pi}_U} \ar[d]^{\nu_V} & \tilde{U} \ar[d]^{\mu_U} \\
      V \ar[r]^{\pi_U} & U . 
     }
 \end{equation} 
Let  
$\mathcal{F}_U^{(0)}$ be the $\mathbb{Z}_m$-invariant part of 
$(\tilde{\pi}_U)_* (\Omega^2_{\tilde{V}}(\log F_V)(-F_V-\nu_V^* K_V))$. 
Set $V':= V \setminus Q$. 
We have the coboundary map of the local cohomology group 
\[
\tau_V \colon H^1(V', \Omega_{V'}^2 \otimes \omega_{V'}^{-1}) \rightarrow 
H^2_{F_V}( \tilde{V}, \Omega_{\tilde{V}}^2( \log F_V)(-F_V -\nu_V^* K_V) ). 
\]
This is same as the homomorphism used by Namikawa--Steenbrink \cite{NamSt} and Minagawa \cite{mina}. 

\begin{lem}\label{nonzerocobdry}(\cite[Theorem 1.1]{NamSt}, \cite[Lemma 4.1]{mina})
Let $V$ be a Stein space as above. 
Assume that $V$ is not rigid. Then $\tau_V \neq 0$. 
\end{lem}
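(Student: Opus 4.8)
The plan is to reduce the statement to the known Gorenstein case of Namikawa--Steenbrink by passing through the index one cover, and then to exploit the stratification of \S\ref{stratif} to locate a stratum on which the simultaneous resolution makes the obstruction to simultaneous resolution visible as a nonzero cohomology class. Concretely, write $\pi_U\colon V\to U$ as in \S\ref{useful}; since $V$ is a disjoint union of copies of the index one cover $W$ of $U$, it suffices to treat one component, i.e.\ we may assume $V=W$ is a germ of a terminal (hence isolated cDV) hypersurface singularity in $(\mathbb{C}^4,0)$. The hypothesis that $V$ is not rigid means $\Def(V)$ has positive dimension, so in the stratification $\Def(V)=\amalg_{k\ge0}S_k$ the open stratum $S_0$ (over which $\mathcal{V}$ is smooth) is nonempty and the complement $\amalg_{k>0}S_k$ is a proper closed subset; in particular there is a first-order deformation direction along which $V$ acquires a genuine (non-simultaneously-resolvable in a trivial sense) smoothing.

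The key steps, in order: first, translate $\tau_V$ into the language of \cite{NamSt}: the target $H^2_{F_V}(\tilde V,\Omega^2_{\tilde V}(\log F_V)(-F_V-\nu_V^*K_V))$ is, by the local duality/residue identifications used there, the obstruction space controlling whether an infinitesimal deformation of $V$ lifts to one admitting a simultaneous resolution compatible with $\nu_V$, and $H^1(V',\Omega^2_{V'}\otimes\omega_{V'}^{-1})$ receives the Kodaira--Spencer class of a one-parameter deformation of $V'$. Second, pick a smooth curve germ $\Delta^1\to\Def(V)$ through $0$ whose image meets $S_0$ for $t\ne0$ — this exists because $S_0$ is Zariski-open and dense and $V$ is not rigid — and restrict the semi-universal family to obtain $\mathcal{V}_{\Delta^1}\to\Delta^1$ with smooth generic fiber. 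Third, feed this family into the Namikawa--Steenbrink machinery exactly as in \cite[Theorem 1.1]{NamSt} (and as Minagawa does in \cite[Lemma 4.1]{mina}): the associated Kodaira--Spencer class $\kappa\in H^1(V',\Omega^2_{V'}\otimes\omega_{V'}^{-1})$ has $\tau_V(\kappa)\ne0$, because if $\tau_V(\kappa)$ vanished then the local cohomology exact sequence would force the deformation to admit a simultaneous resolution to first order over $\tilde V$, contradicting that the nearby fiber is smooth while the total space of any simultaneous resolution would be forced to have the exceptional divisor of $\nu_V$ persist.

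The main obstacle I expect is step three: carefully matching the coboundary map $\tau_V$ defined here via $\Omega^2(\log F_V)(-F_V-\nu^*K_V)$ with the map appearing in \cite[Theorem 1.1]{NamSt}, and verifying that "not rigid" is exactly the hypothesis needed to produce a deformation whose Kodaira--Spencer class is not killed by $\tau_V$. This requires knowing that on a cDV singularity every nontrivial first-order deformation that survives to an actual smoothing (guaranteed to exist by Reid--Mori) cannot be realized by a simultaneous resolution along the \emph{fixed} resolution $\nu_V$ — essentially the content of \cite{NamSt} — so that the obstruction class in the local cohomology group is forced to be nonzero. Once this identification is in place, the passage from $V$ (a disjoint union of index one covers) back to the statement is immediate since the vanishing of $\tau_V$ on all of $V$ would imply its vanishing on each component, and each component is handled by the Gorenstein result.
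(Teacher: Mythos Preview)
The paper does not give its own proof of this lemma; it is stated as a direct citation of \cite[Theorem 1.1]{NamSt} and \cite[Lemma 4.1]{mina}, with no proof environment. The paragraph following the statement is not a proof of the lemma but rather a consequence of it: using that $\tau_V\neq 0$ together with the fact that $T^1_{(V,Q)}$ is a cyclic $\mathcal{O}_{V,Q}$-module generated by $\eta_V$, one deduces $\tau_V(\eta_V)\neq 0$.

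Your sketch therefore cannot be compared to a proof in the paper, but it does have a genuine gap worth flagging. In step three you assert that $\tau_V(\kappa)=0$ would force the deformation to admit a simultaneous resolution to first order. The local cohomology sequence only tells you that $\kappa$ lifts to $H^1(\tilde V,\Omega^2_{\tilde V}(\log F_V)(-F_V-\nu_V^*K_V))$. This group is \emph{not} $H^1(\tilde V,\Theta_{\tilde V})$: since $K_{\tilde V}=\nu_V^*K_V+\sum a_jF_{V,j}$ with $a_j\ge 1$, one has $\Theta_{\tilde V}\simeq\Omega^2_{\tilde V}(-\nu_V^*K_V-\sum a_jF_{V,j})$, and there is a natural map $\Theta_{\tilde V}\to\Omega^2_{\tilde V}(\log F_V)(-F_V-\nu_V^*K_V)$ (this is exactly the map $\alpha_i$ in Claim~\ref{nones}), but it is not an isomorphism. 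So lying in $\Ker\tau_V$ does not obviously mean ``comes from a deformation of $\tilde V$''; the containment proved in Claim~\ref{nones} goes only one way. The actual argument in \cite{NamSt} is Hodge-theoretic, using the mixed Hodge structure on the cohomology of the link of the singularity, and is not a simple simultaneous-resolution obstruction argument. You correctly anticipated that step three is the crux, but the identification you need is more delicate than ``$\Ker\tau_V=$ deformations with simultaneous resolution.''
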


We see that the cohomology groups appearing in $\tau_V$ are $\mathcal{O}_{V, Q}$-modules. 
Moreover, $\tau_V$ is an $\mathcal{O}_{V,Q}$-module homomorphism. Note that $T^1_{(V,Q)} \simeq  H^1(V', \Omega_{V'}^2 \otimes \omega_{V'}^{-1})$ 
is generated by one element $\eta_V$ as an $\mathcal{O}_{V,Q}$-module. Actually $\eta_V \in T^1_{(V,Q)}$ corresponds to a deformation 
$(f_V + t =0) \subset (\mathbb{C}^4,0) \times \Delta^1$, where $t$ is the coordinate on $\Delta^1$.    
Hence we see that $\tau_V(\eta_V) \neq 0$. 

The $G$-invariant part of $\tau_V$ is 
\[
\phi_U \colon H^1( U', \Omega^2_{U'} \otimes \omega_{U'}^{-1}) \rightarrow H^2_{E_U}( \tilde{U}, \mathcal{F}^{(0)}_{U}),     
\]
where $U':= U \setminus \{p \}$ is the punctured neighborhood and $E_U \subset \tilde{U}$ is the exceptional locus of $\mu_U$. 

If $(U,p)$ is ordinary, we see that $\eta_V$ is contained in $H^1( U', \Omega^2_{U'} \otimes \omega_{U'}^{-1}) \subset H^1(V', \Omega_{V'}^2 \otimes \omega_{V'}^{-1})$ 
since $\eta_V$ induces a deformation $(f_V + t =0)/ \mathbb{Z}_r \subset \mathbb{C}^4/\mathbb{Z}_r \times \Delta^1$ of the germ $(U,p)$. Hence we obtain the following. 

\begin{lem}\label{ordinarynonzero}
Let $(U,p)$ be a germ of an ordinary terminal singularity. Then $\phi_U \neq 0$. 
\end{lem}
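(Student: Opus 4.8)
The plan is to reduce the statement about $\phi_U$ to the already-established non-vanishing of the full coboundary map $\tau_V$ from Lemma \ref{nonzerocobdry}, using the ordinariness hypothesis to locate a suitable class inside the $G$-invariant part. First I would recall that $\phi_U$ is precisely the $G$-invariant part (equivalently, the $\mathbb{Z}_m$-eigenpart with trivial character) of $\tau_V$, which is a $G$-equivariant $\mathcal{O}_{V,Q}$-module homomorphism between the source $H^1(V',\Omega^2_{V'}\otimes\omega_{V'}^{-1})\simeq T^1_{(V,Q)}$ and the target local cohomology module. Since taking $G$-invariants is exact (the order of $G$ is invertible), $\phi_U$ is simply the restriction of $\tau_V$ to the invariant submodule, landing in the invariant submodule of the target, which is $H^2_{E_U}(\tilde U,\mathcal{F}^{(0)}_U)$ by the definition of $\mathcal{F}^{(0)}_U$ as the $\mathbb{Z}_m$-invariant part. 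Thus it suffices to exhibit one invariant class on which $\tau_V$ is nonzero.

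The key step is the identification of such a class. I would take $\eta_V\in T^1_{(V,Q)}$, the generator corresponding to the deformation $(f_V+t=0)\subset(\mathbb{C}^4,0)\times\Delta^1$; we already know $\tau_V(\eta_V)\neq 0$ because $\tau_V$ is nonzero and $\eta_V$ generates $T^1_{(V,Q)}$ as an $\mathcal{O}_{V,Q}$-module and $\tau_V$ is $\mathcal{O}_{V,Q}$-linear. Now invoke ordinariness: $\zeta_U=1$, so $f_V$ is $\mathbb{Z}_r$-invariant (not merely semi-invariant), hence the deformation $(f_V+t=0)$ descends to a $\mathbb{Z}_r$-equivariant deformation, i.e.\ the perturbation parameter $t$ can be taken $G$-invariant, and therefore $\eta_V$ lies in the $G$-invariant submodule $H^1(U',\Omega^2_{U'}\otimes\omega_{U'}^{-1})\subset H^1(V',\Omega^2_{V'}\otimes\omega_{V'}^{-1})$. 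Since $\phi_U$ is the restriction of $\tau_V$ to this submodule, we get $\phi_U(\eta_V)=\tau_V(\eta_V)\neq 0$, so $\phi_U\neq 0$. Here I should also note that ordinariness guarantees $(U,p)$ is itself not rigid (it admits a $\mathbb{Q}$-smoothing by Reid--Mori), so the hypothesis of Lemma \ref{nonzerocobdry} that $V$ is not rigid is automatically satisfied — or at least I would remark that the rigid case is vacuous since then $T^1=0$ and there is nothing to deform, though one should be slightly careful about which statement of $\phi_U\neq 0$ is intended.

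The main obstacle I anticipate is making rigorous the claim that the class $\eta_V$ actually lands in the $G$-invariant part of the cohomology. The subtle point is matching up the abstract deformation-theoretic description ($\eta_V$ as an element of $T^1_{(V,Q)}$ represented by an explicit one-parameter family) with the sheaf-cohomological description ($H^1(V',\Omega^2_{V'}\otimes\omega_{V'}^{-1})$ via the isomorphism $T^1_{(V,Q)}\simeq H^1(V',\Omega^2_{V'}\otimes\omega_{V'}^{-1})$ coming from the fact that $V$ is a hypersurface with isolated singularity), and verifying that the $G$-action on both sides is compatible so that ``$f_V$ invariant $\Rightarrow$ $\eta_V$ invariant'' is legitimate. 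I would handle this by noting that the isomorphism $T^1_{(V,Q)}\simeq H^1(V',\Omega^2_{V'}\otimes\omega_{V'}^{-1})$ is canonical and hence $G$-equivariant, and that under it the explicit description of $\eta_V$ via the Jacobian ring quotient $\mathbb{C}[x_1,\dots,x_4]/(f_V,\partial f_V)$ shows the class $1$ (corresponding to adding $t$) is $\mathbb{Z}_r$-invariant exactly when $f_V$ is $\mathbb{Z}_r$-invariant, i.e.\ in the ordinary case. Everything else is formal once this compatibility is in place.
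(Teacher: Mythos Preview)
Your proposal is correct and follows exactly the paper's argument: the paper also observes that when $(U,p)$ is ordinary the class $\eta_V$ descends to a deformation of $U$ (i.e.\ lies in the $G$-invariant part), and since $\tau_V(\eta_V)\neq 0$ one concludes $\phi_U\neq 0$. Your additional care about the $G$-equivariance of the identification $T^1_{(V,Q)}\simeq H^1(V',\Omega^2_{V'}\otimes\omega_{V'}^{-1})$ and about the rigidity hypothesis is more explicit than the paper's terse treatment, but the substance is identical.
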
 

\subsection{Proof of Theorem \ref{qsmqfanointro}}\label{proofordqsm}

We can find  good first order deformations as follows. 

\begin{thm}\label{qsmqfano}
Let $X$ be a $\mathbb{Q}$-Fano $3$-fold. 

Then $X$ has a deformation $f \colon \mathcal{X} \rightarrow \Delta^1$ over an unit disc such that 
the singularities on $\mathcal{X}_t$ for $t \neq 0$ satisfy the following condition; 

Let  $p_t \in \mathcal{X}_t$ be a singular point and $U_{p_t}$ its Stein neighborhood. Then $\phi_{U_{p_t}} =0$,  
where $\phi_{U_{p_t}}$ is the homomorphism defined in Section $\ref{useful}$.   
\end{thm}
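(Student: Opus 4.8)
The plan is to produce the deformation $f\colon\mathcal{X}\to\Delta^1$ by exhibiting a suitable first order deformation of $X$ and then invoking the unobstructedness proven in Theorem~\ref{unobsqfano} together with Artin-type existence of a convergent family. Concretely, I would first analyze the local-to-global structure of first order deformations. Let $U$ be the smooth locus of $X$, and let $p_1,\dots,p_s$ be the (finitely many) terminal singular points with Stein neighborhoods $U_{p_i}$. The key object is the sheaf $\mathcal{T}^1_X:=\underline{\Ext}^1_{\mathcal{O}_X}(\Omega^1_X,\mathcal{O}_X)$, which is supported on the singular points, so $H^0(X,\mathcal{T}^1_X)=\bigoplus_i T^1_{(U_{p_i},p_i)}$. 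The local-to-global spectral sequence for $\Ext$ gives an exact sequence
\[
H^1(X,\mathcal{T}_X)\to \Ext^1_{\mathcal{O}_X}(\Omega^1_X,\mathcal{O}_X)\xrightarrow{\ \mathrm{loc}\ } \bigoplus_i T^1_{(U_{p_i},p_i)}\xrightarrow{\ \partial\ } H^2(X,\mathcal{T}_X),
\]
where $\mathcal{T}_X=\underline{\Hom}(\Omega^1_X,\mathcal{O}_X)$. So the first step is to show the local-to-global obstruction $\partial$ to globalizing a prescribed collection of local first order deformations vanishes — i.e.\ $H^2(X,\mathcal{T}_X)=0$, or at least that the relevant classes lie in the image of $\mathrm{loc}$. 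This should follow by a Serre-duality/cyclic-cover argument parallel to the vanishing $H^1(X,\iota_*(\Omega^1_U\otimes\omega_U))=0$ established inside the proof of Theorem~\ref{unobsqfano}; indeed $H^2(X,\mathcal{T}_X)$ is (Serre) dual to an $H^1$ of a reflexive sheaf twisted by $\omega_X$, and the same device of passing to a Gorenstein cyclic cover $Y_m\to X$ branched over a smooth anticanonical member, plus Kodaira–Nakano vanishing on $Y_m$, should kill it.

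Next I would choose, at each singular point $p_i$, the distinguished local first order deformation. By the discussion in Section~\ref{useful}, in the index-one cover $(V_i,Q_i)\subset(\mathbb{C}^4,0)$ the tangent module $T^1_{(V_i,Q_i)}\cong H^1(V_i',\Omega^2_{V_i'}\otimes\omega_{V_i'}^{-1})$ is generated over $\mathcal{O}_{V_i,Q_i}$ by the class $\eta_{V_i}$ corresponding to the deformation $(f_{V_i}+t=0)$, which is a $\mathbb{Q}$-smoothing of $(V_i,Q_i)$; and $\tau_{V_i}(\eta_{V_i})\neq0$ by Lemma~\ref{nonzerocobdry} whenever $V_i$ is not rigid. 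The point of selecting $\eta_{V_i}$ is that a deformation of $U_{p_i}$ realizing it moves the singularity into the open stratum $S_0$ of $\Def(V_i)$ (in the stratification of Section~\ref{stratif}), i.e.\ onto a smoother, quotient-type singularity for which, by construction, the corresponding homomorphism $\phi_{U_{p_{i,t}}}$ vanishes — that is exactly the property we want the general fiber to have. So I would set up the desired global first order class $\theta\in\Ext^1_{\mathcal{O}_X}(\Omega^1_X,\mathcal{O}_X)$ to be one with $\mathrm{loc}(\theta)=(\eta_{V_i})_i$ (using Step~1 to see such $\theta$ exists).

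Then I would integrate: by Theorem~\ref{unobsqfano} the deformation functor $\Def_X$ is smooth, so the first order deformation $\theta$ extends to a formal, hence (by Artin approximation / effectivity of the semiuniversal family for a projective $X$, as in the last line of the proof of Theorem~\ref{unobsqfano}) an actual analytic family $f\colon\mathcal{X}\to\Delta^1$ with $\mathcal{X}_0\cong X$ and Kodaira–Spencer class at $0$ equal to $\theta$. The final step is the semicontinuity/openness argument showing the general fiber $\mathcal{X}_t$ has only singularities with $\phi_{U_{p_t}}=0$: since $\mathcal{T}^1_X$ is concentrated at the $p_i$ and $\theta$ restricts near $p_i$ to $\eta_{V_i}$, the induced family near each $p_i$ is, after passing to index-one covers and taking $G$-invariants, a family whose base maps to $\Def(V_i)$ hitting the open stratum $S_0$ for $t\neq0$; on $S_0$ the fibers of $\mathcal{V}_i$ are smooth, so upstairs $\mathcal{X}_t$ has only quotient singularities, and for those the map $\phi_{U_{p_t}}$ is zero essentially by definition/by the structure in Section~\ref{useful} (the cotangent cohomology entering $\phi$ becomes that of a cyclic quotient singularity whose index-one cover is smooth). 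The main obstacle I expect is Step~1 — producing a genuine \emph{global} first order deformation inducing the prescribed local ones, i.e.\ controlling the obstruction in $H^2(X,\mathcal{T}_X)$ and identifying it via Serre duality with a cohomology group one can annihilate by the Gorenstein-cover/Kodaira–Nakano technique; the integration and semicontinuity steps are comparatively formal once unobstructedness and the local non-vanishing results are in hand.
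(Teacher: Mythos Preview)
Your approach has a genuine gap at the first step. You propose to show $H^2(X,\mathcal{T}_X)=0$ (or at least that the localization map is surjective) via a Serre-duality/cyclic-cover argument parallel to the vanishing used in Theorem~\ref{unobsqfano}. This fails: there are $\mathbb{Q}$-Fano $3$-folds with $H^2(X,\Theta_X)\neq0$ (see \cite[Example 5]{NamFano}, mentioned in the Remark preceding the paper's proof), and correspondingly the restriction $T^1_X\to\bigoplus_i T^1_{U_i}$ is \emph{not} surjective in general. So you cannot hope to lift an arbitrary collection $(\eta_i)$ of local first order deformations to a global one.

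The paper's way around this is the whole point of the diagram~(\ref{minadiag}). Instead of lifting $\eta_i\in T^1_{U_i}$ directly, one pushes it through $\phi_i$ into a local cohomology group $H^2_{E_i}(\tilde{U}_i,\mathcal{F}_i^{(0)})$ on an equivariant resolution of a cyclic cover. The relevant vanishing is then $H^2(\tilde{X},\mathcal{F}^{(0)})=0$, which \emph{does} hold by the Guill\'en--Navarro~Aznar--Puerta--Steenbrink vanishing theorem, and this makes the coboundary map $\oplus\psi_i$ surjective. One then lifts $\phi_i(\eta_i)$ (not $\eta_i$ itself) to a global class $\eta$; the resulting $p_{U_i}(\eta)$ need not equal $\eta_i$, but it satisfies $p_{U_i}(\eta)\notin\Ker\phi_i$, and Claim~\ref{nones} shows this is enough to conclude the induced local deformation does not come from the resolution.

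A second, smaller issue: you assert that the chosen deformation moves the singularity directly into the open stratum $S_0$ of $\Def(V_i)$. The argument only shows that the image leaves the \emph{minimal} stratum $S_{i,k}$; the paper then iterates the construction, each time strictly decreasing the stratum index, until one reaches fibers all of whose singularities satisfy $\phi_{U_{p_t}}=0$. Your single-step claim would need a separate justification that is not available.
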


Lemma \ref{ordinarynonzero} and Theorem \ref{qsmqfano} imply the following. 

\begin{cor}\label{qsmordqfano}
Let $X$ be a $\mathbb{Q}$-Fano $3$-fold with only ordinary terminal singularities. 
Then $X$ has a $\mathbb{Q}$-smoothing.
\end{cor}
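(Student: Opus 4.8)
The plan is to feed the deformation produced by Theorem \ref{qsmqfano} into the local criterion supplied by Lemma \ref{ordinarynonzero}. Concretely, I would first apply Theorem \ref{qsmqfano} to $X$ to obtain a deformation $f \colon \mathcal{X} \rightarrow \Delta^1$ such that, for every $t \neq 0$ and every singular point $p_t \in \mathcal{X}_t$ with Stein neighborhood $U_{p_t}$, the homomorphism $\phi_{U_{p_t}}$ of Section \ref{useful} vanishes. After shrinking $\Delta^1$, I would check that each $\mathcal{X}_t$ is again a $\mathbb{Q}$-Fano $3$-fold: the relevant deformation is $\mathbb{Q}$-Gorenstein, terminality is preserved under small $\mathbb{Q}$-Gorenstein deformation, and ampleness of $-K$ is an open condition, so $-K_{\mathcal{X}_t}$ stays ample and $\mathcal{X}_t$ has only terminal singularities.

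The heart of the argument is to identify the singularities occurring on $\mathcal{X}_t$, and I would argue that each $p_t$ is an \emph{ordinary} terminal singularity. Indeed, $p_t$ is obtained by deforming one of the (ordinary) germs $(X,p)$, and by the equivariant correspondence $\Def_{U} \simeq \Def^G_{V}$ of Proposition \ref{Gequivdeffuncprop} such a deformation is induced by a $G$-equivariant deformation of the index-$1$ cover $V$. Since $(X,p)$ is ordinary, the defining equation $f_V$ is $\mathbb{Z}_r$-invariant, i.e.\ $\zeta_U = 1$, and an equivariant deformation is then cut out by an equation with the same eigenvalue, hence again invariant; therefore the deformed quotient singularity $p_t$ has $\zeta_{U_{p_t}} = 1$ and is ordinary (when not smooth). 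This preservation of ordinariness under deformation is the key input, and I expect it to be the main obstacle to pin down cleanly, since it is exactly what links the global hypothesis on $X$ to the local vanishing coming from Theorem \ref{qsmqfano}.

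Granting this, I would conclude as follows. Fix $t \neq 0$ and a singular point $p_t$ on $\mathcal{X}_t$; it is an ordinary terminal singularity with $\phi_{U_{p_t}} = 0$. If $p_t$ were \emph{not} a quotient singularity, its index-$1$ cover $V_{p_t}$ would be a genuine singular cDV point, hence not rigid, so Lemma \ref{nonzerocobdry} would give $\tau_{V_{p_t}} \neq 0$, and then ordinariness together with Lemma \ref{ordinarynonzero} would force $\phi_{U_{p_t}} \neq 0$, contradicting the output of Theorem \ref{qsmqfano}. Therefore every singular point of $\mathcal{X}_t$ is a terminal quotient singularity, which in dimension $3$ is isolated and hence of codimension $\geq 3$. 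Consequently $f \colon \mathcal{X} \rightarrow \Delta^1$ is a $\mathbb{Q}$-smoothing of $X$. The only points genuinely requiring care are the openness of the $\mathbb{Q}$-Fano property along $f$ and the preservation of ordinariness established in the second step; everything else is a formal combination of Theorem \ref{qsmqfano} with Lemmas \ref{nonzerocobdry} and \ref{ordinarynonzero}.
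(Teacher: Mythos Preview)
Your proposal is correct and follows essentially the same approach as the paper: apply Theorem \ref{qsmqfano} to obtain a deformation whose general fibre has only singularities with $\phi_{U_{p_t}}=0$, observe that deformations of ordinary terminal singularities remain ordinary, and invoke Lemma \ref{ordinarynonzero} to conclude that any non-quotient singular point would have $\phi \neq 0$, a contradiction. The paper compresses this into a single sentence (``we can continue the process in the proof of Theorem \ref{qsmqfano} \ldots\ since deformations of ordinary terminal singularities are ordinary''), while you spell out the preservation of ordinariness and the $\mathbb{Q}$-Fano property more carefully; the substance is the same.
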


\begin{proof}[Proof of Corollary \ref{qsmordqfano}]
By Lemma \ref{ordinarynonzero}, we can continue the process in the proof of Theorem \ref{qsmqfano} until we get 
a $\mathbb{Q}$-smoothing since deformations of ordinary terminal singularities are ordinary. 
\end{proof}

\begin{rem}
We first explain the strategy of the proof of Theorem \ref{qsmqfano}. 
Let $p_i \in U_i$ be a Stein neighborhood of a singularity on $X$. 
In order to find a good deformation direction, we study the restriction homomorphism $ p_{U_i} \colon T^1_X \rightarrow T^1_{U_i}$. 
The problem is that this is not always surjective. Actually there is an example of a $\mathbb{Q}$-Fano $3$-fold $X$ such that 
$H^2(X, \Theta_X) \neq 0$ (\cite[Example 5]{NamFano}). 
So we use the commutative diagram as in (\ref{minadiag}). 
The diagram is similar to that in the proof of \cite[Theorem 4.2]{mina}. 
Minagawa used a cyclic cover of $X$ branched only on singular points. 
We use a cyclic cover of $X$ branched along a divisor, but the framework of the proof is almost same.   
\end{rem}

\begin{proof}[Proof of Theorem \ref{qsmqfano}]
Let $p_1,\ldots, p_l \in X$  be the non-rigid singular points of $X$ such that
 $p_1,\ldots, p_{l'}$ for some $l' \le l$ are the points which satisfy  
\[
\phi_{U_i} \neq 0 
\]  
 for $i=1, \ldots, l'$, where $U_i$ is a  small Stein neighborhood of $p_i$.  

First we prepare notations to introduce the diagram (\ref{minadiag}).  
Let $m$ be a sufficiently large integer such that  $-mK_X $ is very ample and $|{-}m K_X|$ contains a smooth member $D_m$ 
such that $D_m \cap \Sing X = \emptyset$. 
 Let 
\[
\pi \colon Y:= \Spec \bigoplus_{i=0}^{m-1} \mathcal{O}_X(iK_X)  \rightarrow X
\]
 be the cyclic cover determined by $D_m$. 
There exists a good $\mathbb{Z}_m$-equivariant resolution (\cite{AW}) $\nu \colon  \tilde{Y} \rightarrow Y$ which induces an isomorphism 
$\nu^{-1}(Y \setminus \pi^{-1} \{p_1,\ldots, p_l \}) \rightarrow Y \setminus \pi^{-1} \{p_1,\ldots, p_l \}$ 
and a birational morphism 
$\mu \colon \tilde{X} := \tilde{Y}/ \mathbb{Z}_m \rightarrow X$.  
These induce the following cartesian diagram; 
\begin{equation}\label{globaldiag}
\xymatrix{
      \tilde{Y} \ar[r]^{\tilde{\pi}} \ar[d]^{\nu} & \tilde{X} \ar[d]^{\mu} \\
     Y \ar[r]^{\pi}& X .
     }
 \end{equation}

Let $\pi_i \colon V_{i} := \pi^{-1}(U_i) \rightarrow U_i$ and 
$\nu_i \colon \tilde{V}_{i}:= \nu^{-1}(V_i) \rightarrow V_{i}$ be morphisms induced by the morphisms in the above diagram. 
Put $\tilde{U}_i := \tilde{V}_{i}/ \mathbb{Z}_m$.  Then we get the following cartesian diagram;  
\begin{equation}\label{localdiag}
\xymatrix{
      \tilde{V}_{i} \ar[r]^{\tilde{\pi}_{i}} \ar[d]^{\nu_{i}} & \tilde{U}_i \ar[d]^{\mu_i} \\
      V_{i} \ar[r]^{\pi_{i}} & U_i .
     }
 \end{equation}
 
 Put $F:= \Exc(\nu), E:= \Exc (\mu),  D' := \pi^{-1}(D_m) $ and $ L' := \mathcal{O}_{Y}(D') = \mathcal{O}_Y(\pi^*(-K_X))$. Note that 
$F$ has normal crossing support since $\nu$ is good. 
Also put $F_{i} := \Exc (\nu_{i})$ and 
$E_i := \Exc (\mu_i)$.  
 Let $\mathcal{F}^{(0)}$ be the $\mathbb{Z}_m$-invariant part of 
 $
 \tilde{\pi}_* (\Omega^2_{\tilde{Y}}(\log F)(-F) \otimes \nu^* L')$.  
 Let $U$ be the smooth part of $X$. 
Note that $\mathcal{F}^{(0)}|_U \simeq \Omega^2_U \otimes \omega_U^{-1}$.  
 Set $\mathcal{F}_i^{(0)} := \mathcal{F}^{(0)}|_{\tilde{U}_i}$ 
and $U'_i := U_i \setminus \{p_i \}$.
Note that $\mathcal{F}_i^{(0)}|_{U'_i} \simeq \Omega^2_{U'_i} \otimes \omega_{U'_i}^{-1}$.

 We have the following commutative diagram;  
\begin{equation}\label{minadiag}
\xymatrix{
      H^1(U, \Omega^2_U \otimes \omega_U^{-1} ) \ar[r]^{\oplus \psi_i} \ar[d]^{\oplus p_{U_i}} & 
\oplus_{i=1}^{l'} H^2_{E_i}(\tilde{X}, \mathcal{F}^{(0)} )
\ar[d]^{\simeq} \ar[r] &  H^2 (\tilde{X}, \mathcal{F}^{(0)} ) \\
     \oplus_{i=1}^{l'} H^1(U'_i, \Omega^2_{U'_i}\otimes \omega_{U'_i}^{-1}) \ar[r]^{\oplus \phi_{i}}& 
\oplus_{i=1}^{l'}  H^2_{E_i}(\tilde{U}_i, \mathcal{F}_{i}^{(0)}). 
     }
 \end{equation}
We identify $H^2_{E_i}(\tilde{X}, \mathcal{F}^{(0)} )$ and $H^2_{E_i}(\tilde{U}_i, \mathcal{F}_{i}^{(0)})$ by the natural homomorphism induced by restriction. 
Note that $
\mathcal{F}_i^{(0)} \simeq \mathcal{F}_{U_i}^{(0)}  
$, where $\mathcal{F}_{U_i}^{(0)}$ is the sheaf defined in Section \ref{useful}.
Hence $\phi_i$ is $\phi_{U_i}$ in Section \ref{useful}.

Next we see that $p_{U_i}$ in the diagram (\ref{minadiag}) is the restriction homomorphism of $T^1$ as follows. 
Let $T^1_X, T^1_{V_i}, T^1_{U_i}$ be the tangent spaces of the functors $\Def_X, \Def_{V_i}, \Def_{U_i}$ respectively. 
By \cite[\S 1 Theorem 2]{Schl}  or the proof of Proposition \ref{obs} in this paper, we can see that the first order deformations of 
$V_i, U_i$ are bijective to those of the smooth part $V'_i, U'_i$. Similarly we can see the same correspondence for $X$. So we have    
\[
T^1_X \simeq H^1(U, \Theta_U) \simeq H^1(U, \Omega_U^2 \otimes \omega_U^{-1}), 
\]
\[ 
T^1_{V_i} \simeq H^1(V'_i, \Theta_{V'_i}) \simeq H^1(V'_i, \Omega^2_{V'_i} \otimes \omega^{-1}_{V'_i}),    
\]
\[ 
T^1_{U_i} \simeq H^1(U'_i, \Theta_{U'_i}) \simeq H^1(U'_i, \Omega^2_{U'_i} \otimes \omega^{-1}_{U'_i}),    
\]
where $\Theta_U, \Theta_{V'_i}, \Theta_{U'_i}$ are the tangent sheaves of $U, V'_i, U'_i$ respectively. 
Hence $p_{U_i}$ is regarded as the restriction homomorphism $T^1_X \rightarrow T^1_{U_i}$.  

We want to lift $\eta_i \in H^1(U'_i, \Omega^2_{U'_i} \otimes \omega^{-1}_{U'_i}) \simeq T^1_{U_i}$ which induces a non-trivial deformation of $U_i$ 
to an element of $H^1(U, \Omega_U^2 \otimes \omega_U^{-1}) \simeq T^1_X$. 
In order to do that, we consider $\phi_i(\eta_i) \in H^2_{E_i}(\tilde{U}_i, \mathcal{F}_{i}^{(0)})$ and lift it by using the diagram (\ref{minadiag}).

Since $\tilde{\pi}$ is finite, $H^2(\tilde{X}, \mathcal{F}^{(0)})$ is a direct summand of 
\[
H^2(\tilde{Y}, \Omega^2_{\tilde{Y}}(\log F)(-F) \otimes \nu^* L')
\]  
and this is zero by the vanishing theorem by Guillen-Navarro Aznar-Puerta-Steenbrink (\cite{PS} Theorem 7.30 (a)). Hence $\oplus \psi_i$ is surjective.

By the assumption that $\phi_i \neq 0$ for $i=1, \ldots, l'$, there exists 
$\eta_i \in H^1( U'_i, \Omega^2_{U'_i} \otimes \omega_{U'_i}^{-1}) \setminus \Ker \phi_{i} $. 
By the surjectivity of $\oplus \psi_i$, there exists $\eta \in H^1(U, \Omega^2_{U}\otimes \omega_U^{-1} )$ such that 
$\psi_i(\eta) = \phi_i ( \eta_i)$. 
Then we have  
\begin{equation}\label{etaker}
p_{U_i} ( \eta ) \notin \Ker (\phi_{i}).
\end{equation} 
We want to see that $p_{U_i}(\eta)$ induces a non-trivial deformation of a singularity $p_i \in U_i$.  
For that purpose, we study the deformation of $V_i$ induced by $p_{U_i}(\eta)$ and see that it does not come from 
a deformation of the resolution of $V_i$.

Since $V_i$ has only rational singularities, the birational morphism $\nu_{i} \colon \tilde{V}_{i} \rightarrow V_{i}$ induces a morphism 
of the functors 
$\Def_{\tilde{V}_{i}} \rightarrow \Def _{V_{i}}$ (\cite[Theorem 1.4 (c)]{wahl}) and the homomorphism 
$H^1(\tilde{V}_{i}, \Theta_{\tilde{V}_{i}}) \rightarrow H^1(V'_{i}, \Theta_{V'_{i}})$ on their tangent spaces. 
This homomorphism can be rewritten as 
\[
(\nu_{i})_* \colon H^1(\tilde{V}_{i}, \Omega^2_{\tilde{V}_{i}}\otimes \omega_{\tilde{V}_{i}}^{-1}) \rightarrow 
H^1(V'_{i}, \Omega^2_{V'_{i}} \otimes \omega^{-1}_{V'_i})  
\] 
and this is a homomorphism induced by an open immersion $V'_i \hookrightarrow \tilde{V}_i$.   
Note that infinitesimal deformations of $U_i$ come from $\mathbb{Z}_m$-equivariant deformations of $V_{i}$ and 
 $H^1( U'_i, \Theta_{U'_i}) \simeq H^1( V'_{i}, \Theta_{V'_{i}})^{\mathbb{Z}_m}$.     

Note that $\phi_i$ is the $\mathbb{Z}_m$-invariant part of the homomorphism 
\[
\tau_i \colon H^1(V'_i, \Omega^2_{V'_i} \otimes \omega^{-1}_{V'_i}) 
\rightarrow H^2_{F_i}(\tilde{V}_i, \Omega_{\tilde{V}_i}^2(\log F_i)(-F_i - \nu_i^*K_{V_i}) ). 
\]

\begin{claim}\label{nones}
$\Image (\nu_{i})_* \subset \Ker \tau_{i}$.
\end{claim}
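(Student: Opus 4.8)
The plan is to show that any infinitesimal deformation of $V_i$ coming from a deformation of the resolution $\tilde V_i$ lies in the kernel of the coboundary map $\tau_i$. The key conceptual point is that $\tau_i$ is precisely the obstruction to lifting a deformation of the punctured neighborhood $V'_i$ across the exceptional locus $F_i$ of $\nu_i$, in the sense of the local cohomology exact sequence
\[
H^1(\tilde V_i, \Omega^2_{\tilde V_i}(\log F_i)(-F_i-\nu_i^*K_{V_i})) \to H^1(V'_i, \Omega^2_{V'_i}\otimes\omega_{V'_i}^{-1}) \xrightarrow{\tau_i} H^2_{F_i}(\tilde V_i, \Omega^2_{\tilde V_i}(\log F_i)(-F_i-\nu_i^*K_{V_i})),
\]
so that the image of $\tau_i$ is zero exactly on classes that extend to $\tilde V_i$. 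So the natural strategy is to identify $\Image(\nu_i)_*$ with (a subspace of) the image of the first map in this sequence, i.e.\ with classes that extend over $F_i$.

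First I would recall, following Namikawa--Steenbrink \cite[Theorem 1.1]{NamSt} (and Minagawa \cite[Lemma 4.1]{mina}), the precise origin of $\tau_i$: it comes from the excision/local-cohomology sequence attached to the pair $(\tilde V_i, F_i)$ applied to the sheaf $\Omega^2_{\tilde V_i}(\log F_i)(-F_i-\nu_i^*K_{V_i})$, using that on the complement $\tilde V_i\setminus F_i \simeq V'_i$ this sheaf restricts to $\Omega^2_{V'_i}\otimes\omega_{V'_i}^{-1}$ (as $\nu_i$ is an isomorphism there and $K$ pulls back trivially away from the exceptional divisor). Hence $\tau_i$ literally is the connecting map of that long exact sequence, and $\Ker\tau_i$ equals the image of $H^1(\tilde V_i, \Omega^2_{\tilde V_i}(\log F_i)(-F_i-\nu_i^*K_{V_i}))$ under restriction to $V'_i$.

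Next I would trace through what $(\nu_i)_*$ does on tangent spaces. A first-order deformation of $\tilde V_i$ is a class in $H^1(\tilde V_i,\Theta_{\tilde V_i})$; via the isomorphism $\Theta_{\tilde V_i}\simeq \Omega^2_{\tilde V_i}\otimes\omega_{\tilde V_i}^{-1}$ (which holds since $\tilde V_i$ is smooth of dimension $3$), and since $\omega_{\tilde V_i}=\mathcal O(\nu_i^*K_{V_i}+(\text{discrepancy divisor supported on }F_i))$, such a class naturally maps into $H^1(\tilde V_i,\Omega^2_{\tilde V_i}(\log F_i)(-F_i-\nu_i^*K_{V_i}))$ — here one uses that $\nu_i$ is a \emph{good} resolution so $F_i$ has normal crossing support, that the discrepancies of a resolution of a terminal (in fact rational Gorenstein, on the cover) singularity are nonnegative, and that $\Omega^2_{\tilde V_i}\subset \Omega^2_{\tilde V_i}(\log F_i)$; a small comparison of the twisting divisors shows $\Theta_{\tilde V_i}$ lands inside the claimed log sheaf after twisting. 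Composing with restriction to the open set $V'_i$ gives exactly the homomorphism $(\nu_i)_*$ described in the text. Therefore $\Image(\nu_i)_*$ is contained in the image of $H^1(\tilde V_i,\Omega^2_{\tilde V_i}(\log F_i)(-F_i-\nu_i^*K_{V_i}))\to H^1(V'_i,\Omega^2_{V'_i}\otimes\omega_{V'_i}^{-1})$, which is $\Ker\tau_i$ by the previous paragraph. This finishes the proof of the claim.

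The main obstacle I expect is the bookkeeping in the middle step: matching up $\Theta_{\tilde V_i}$ (equivalently $\Omega^2_{\tilde V_i}\otimes\omega_{\tilde V_i}^{-1}$) with the specific twisted log sheaf $\Omega^2_{\tilde V_i}(\log F_i)(-F_i-\nu_i^*K_{V_i})$ compatibly with restriction to $V'_i$, and verifying that the functorial map $\Def_{\tilde V_i}\to\Def_{V_i}$ of \cite{wahl} induces on tangent spaces precisely the sheaf-theoretic map obtained from the open immersion $V'_i\hookrightarrow\tilde V_i$. Once those identifications are in place, the inclusion $\Image(\nu_i)_*\subset\Ker\tau_i$ is immediate from exactness of the local cohomology sequence, and one should note everything here is $\mathbb Z_m$-equivariant so the same inclusion holds after passing to invariant parts, which is what is needed for the diagram \eqref{minadiag}.
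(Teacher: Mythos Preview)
Your approach is essentially the same as the paper's: both identify $\Ker\tau_i$ as the image of the restriction map $\rho_i\colon H^1(\tilde V_i,\Omega^2_{\tilde V_i}(\log F_i)(-F_i-\nu_i^*K_{V_i}))\to H^1(V'_i,\Omega^2_{V'_i}\otimes\omega_{V'_i}^{-1})$ via the local cohomology sequence, and then factor $(\nu_i)_*$ through $\rho_i$ using the discrepancy formula $K_{\tilde V_i}=\nu_i^*K_{V_i}+\sum a_{i,j}F_{i,j}$ together with the inclusion $\Omega^2_{\tilde V_i}\subset\Omega^2_{\tilde V_i}(\log F_i)$. One precision: you write that the discrepancies are ``nonnegative'', but the sheaf inclusion $\Omega^2_{\tilde V_i}(-K_{\tilde V_i})\hookrightarrow\Omega^2_{\tilde V_i}(\log F_i)(-F_i-\nu_i^*K_{V_i})$ actually needs $a_{i,j}\ge 1$ for every exceptional component; this holds because $V_i$ is terminal \emph{Gorenstein}, so the discrepancies are positive integers, which is exactly what the paper invokes.
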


\begin{proof}[Proof of Claim] 
We can write 
\[
K_{\tilde{V}_{i}} = \nu_{i}^* K_{V_{i}} + \sum_{j=1}^{m_i} a_{i,j} F_{i,j},
\]
where $F_i = \bigcup_{j=1}^{m_i} F_{i,j}$ is the irreducible decomposition and $a_{i,j} \ge 1$ are some integers 
for $j=1, \ldots, m_i$ since $V_i$ is terminal Gorenstein.  
We can define a homomorphism 
\[
\alpha_i \colon H^1( \tilde{V}_i, \Omega_{\tilde{V}_i}^2 \otimes \omega^{-1}_{\tilde{V}_i}) 
\rightarrow H^1(\tilde{V}_i, \Omega^2_{\tilde{V}_i}(\log F_i)(-F_i-\nu_i^*K_{V_i}) )
\] 
as a composite of the following homomorphisms; 
\begin{multline}
\alpha_i \colon H^1( \tilde{V}_i, \Omega_{\tilde{V}_i}^2 \otimes \omega^{-1}_{\tilde{V}_i}) = 
H^1( \tilde{V}_i, \Omega_{\tilde{V}_i}^2 ( -\sum_{j=1}^{m_i} a_{i,j} F_{i,j} - \nu_i^* K_{V_i})  ) \\ 
\rightarrow H^1(\tilde{V}_i, \Omega^2_{\tilde{V}_i}(\log F_i)(-\sum_{j=1}^{m_i} a_{i,j} F_{i,j} - \nu_i^*K_{V_i}))
\rightarrow H^1(\tilde{V}_i, \Omega^2_{\tilde{V}_i}(\log F_i)(-F_i - \nu_i^*K_{V_i})) 
\end{multline}
since $a_{i,j} \ge 1$. 

Note that $\Ker \tau_i = \Image \rho_i$, where we put 
\[
\rho_i \colon H^1(\tilde{V}_i, \Omega_{\tilde{V}_i}^2(\log F_i)(-F_i - \nu_i^*K_{V_i}) ) 
\rightarrow H^1(V'_i, \Omega^2_{V'_i} \otimes \omega^{-1}_{V'_i}).
\] 
We can see that $(\nu_i)_*$ factors as 
\[
(\nu_i)_* \colon H^1(\tilde{V}_i, \Omega^2_{\tilde{V}_i} \otimes \omega^{-1}_{\tilde{V}_i}) 
 \stackrel{\alpha_i}{\rightarrow} H^1(\tilde{V}_i, \Omega^2_{\tilde{V}_i}(\log F_i)(-F_i - \nu_i^*K_{V_i}) ) 
\stackrel{\rho_i}{\rightarrow} H^1(V'_i, \Omega^2_{V'_i}\otimes \omega^{-1}_{V'_i} ). 
\]
Hence $\Ker \tau_i = \Image \rho_i \supset \Image (\nu_i)_*$.   
\end{proof}

By Claim \ref{nones} and the relation (\ref{etaker}), we get $p_{U_i} (\eta) \not\in \Image (\nu_{i})_*$. 
This means that a deformation of $V_i$ induced by $p_{U_i} (\eta)$ does not come from that of the resolution $\tilde{V}_i$. 
In the following, we check that the deformation of $V_i$ goes out from the minimal stratum of the stratification on the Kuranishi space $\Def(V_i)$ 
 introduced in Section \ref{stratif}. 

Let $r_i$ be the Gorenstein index of the singular point $p_i$ and $\pi_{i}^{-1} (p_i) =:\{ q_{i 1}, \ldots, q_{i k(i)}\}$ , where $k(i):= \frac{m}{r_i}$. 
Let 
\[
V_i := \amalg_{j=1}^{k(i)} V_{i,j}
\]
be the decomposition into the connected components of $V_i$. 
Fix a stratification on each $\Def(V_{i,j})$ for $j=1,\ldots, k(i)$ as in Section \ref{stratif}. 
We see that  $p_{U_i}(\eta) \in T^1_{U_i} \subset T^1_{V_{i,1}}$ induces a deformation 
$g_{i,1} \colon \mathcal{V}_{i,1} \rightarrow \Delta^1$. 
By the property of the Kuranishi space, there exists a holomorphic map 
$\varphi_{i,1} \colon \Delta^1 \rightarrow \Def(V_{i,1})$ which induces the above deformation of $V_{i,1}$.  
Let $S_{i,k}$ be the minimal stratum of $\Def(V_{i,1})$. Then the image of $\varphi_{i,1}$ is not contained in $S_{i,k}$. and, 
for general $t \in \Delta^1$, 
 we have $\varphi_{i,1}(t) \in S_{i,k'}$ for some $k' < k$. 
 Let $g \colon \mathcal{X} \rightarrow \Delta^1$ be a small deformation of $X$ over a disc induced by $\eta \in H^1(U, \Theta_U)$. 
 Then $g$ induces a deformation of $V_{i,1}$ 
 We can continue this process as long as $\phi_i \neq 0$ and reach a deformation of $X$ whose general fiber has the required condition in the statement of Theorem \ref{qsmqfano}.  
\end{proof}

\begin{rem}
The author does not know $\phi_U$ is zero or not when $U$ is a Stein neighborhood of an exceptional terminal singularity. 
If we can prove $\phi_U \neq 0$ in that case, it implies Conjecture \ref{qsmqfanointro} by the above proof of Theorem \ref{qsmqfano}. 
\end{rem}

\begin{rem}
There is an example of a weak Fano $3$-fold which does not have a smoothing. 
It is written in \cite[Example 3.7]{minaosaka}. 
\end{rem}

\section{A $\mathbb{Q}$-smoothing of a $\mathbb{Q}$-Fano $3$-fold with a Du Val elephant}

In this section, we study the simultaneous $\mathbb{Q}$-smoothing problem as described in Conjecture \ref{simulqsmconj}. 

\subsection{Deformations of a $\mathbb{Q}$-Fano $3$-fold and its pluri-anticanonical element}

In this section, we prove unobstructedness of deformations of a $\mathbb{Q}$-Fano 3-fold with its pluri-anticanonical element. 
For that purpose, we first prepare a deformation functor of a pair of a Stein neighborhood of a terminal singularity and 
its $\mathbb{Q}$-Cartier divisor.  

Let $U$ be a Stein neighborhood of a $3$-fold terminal singularity of Gorenstein index $r$ and 
$D$ a $\mathbb{Q}$-Cartier divisor on $U$. 
We have the index one cover $\pi_U \colon V:= \Spec \oplus_{j=0}^{r-1} \mathcal{O}_U(jK_U) \rightarrow U$ 
determined by an isomorphism $\mathcal{O}_U(rK_U) \simeq \mathcal{O}_U$. 
Let $G:= \Gal (V/U)\simeq \mathbb{Z}_r$ be the Galois group of $\pi_U$. 
 This induces a $G$-action 
on the pair $(V, \Delta)$, where $\Delta:= \pi_U^{-1}(D)$.  
We can define  functors of $G$-equivariant deformations of $(V, \Delta)$ as follows. 

\begin{defn}
Let $\Def^G_{(V,\Delta)} \colon (Art_{\mathbb{C}}) \rightarrow (Sets)$ be a functor such that, for $A \in (Art_{\mathbb{C}})$, a set   
$\Def_{(V,\Delta)}^G(A) \subset \Def_{(V, \Delta)}(A)$ is the set of deformations $(\mathcal{V}, \mathbf{\Delta})$ of $(V, \Delta)$ over $A$ 
with a $G$-action which is compatible with the $G$-action on $(V, \Delta)$.   

We can also define the functor $\Def^G_{V} \colon (Art_{\mathbb{C}}) \rightarrow (Sets)$ of $G$-equivariant deformations of $V$ similarly. 
\end{defn}

\begin{prop}\label{Gequivdeffuncprop}
We have isomorphisms of functors 
\begin{equation}\label{Gequivfunctisom}
 \Def^G_{(V,\Delta)} \simeq \Def_{(U,D)}, \ \ \ \  \Def^G_{V} \simeq \Def_U. 
\end{equation} 
Moreover, these functors are unobstructed and the forgetful homomorphism 
$\Def_{(U,D)} \rightarrow \Def_U$ is a smooth morphism of functors.  
\end{prop}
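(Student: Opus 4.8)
\textbf{Proof proposal for Proposition \ref{Gequivdeffuncprop}.}
The plan is to establish the two functor isomorphisms in \eqref{Gequivfunctisom} first, deduce unobstructedness from them, and finally prove smoothness of the forgetful morphism. For the isomorphism $\Def^G_{(V,\Delta)} \simeq \Def_{(U,D)}$, I would use the fact that $\pi_U \colon V \to U$ is a cyclic cover of degree $r$ which is \'etale in codimension $1$, so that $U$ (resp. $D$) is recovered as the quotient $V/G$ (resp. $\Delta/G$). Given a $G$-equivariant deformation $(\mathcal{V},\mathbf{\Delta})$ over $A$, one forms the quotient by the (finite) $G$-action to obtain a deformation of $(U,D)$ over $A$; conversely, given a deformation $(\mathcal{U},\mathcal{D})$ of $(U,D)$, one takes the index one cover relative to $A$, i.e. the relative $\Spec$ of $\bigoplus_{j=0}^{r-1} \omega^{[j]}_{\mathcal{U}/A}$, using that the reflexive powers $\omega^{[j]}_{\mathcal{U}/A}$ are flat over $A$ and commute with base change (this is exactly the input from \cite[Theorem 12]{kollarflat} already used in the proof of Lemma \ref{iotaflatlem}), and the isomorphism $\omega^{[r]}_U \simeq \mathcal{O}_U$ deforms uniquely. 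These two constructions are mutually inverse, and one checks functoriality in $A$; the same argument without the divisor gives $\Def^G_V \simeq \Def_U$. The compatibility of the divisor with the cover uses that $\Delta = \pi_U^{-1}(D)$ and that the pullback of a $\mathbb{Q}$-Cartier divisor is Cartier on $V$ after passing to the index one cover.

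Next, for unobstructedness: since $V$ is a Stein neighborhood of a terminal Gorenstein (hence cDV, hence hypersurface) singularity, its deformations are unobstructed, and the same holds for the pair $(V,\Delta)$ because $\Delta$ is a Cartier divisor on the hypersurface $V$ — deformations of a Cartier divisor inside a scheme whose deformations are unobstructed are themselves unobstructed (the relevant obstruction group is a cokernel of a surjection onto the obstruction group of $V$, which vanishes). The $G$-action is reductive since $G$ is finite and we are in characteristic $0$, so taking $G$-invariants is exact; hence the $G$-equivariant deformation functors $\Def^G_V$ and $\Def^G_{(V,\Delta)}$ are obtained from the ambient ones by taking invariant subfunctors of the tangent and obstruction spaces, and smoothness is preserved. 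Transporting along the isomorphisms \eqref{Gequivfunctisom} gives unobstructedness of $\Def_U$ and $\Def_{(U,D)}$.

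Finally, for the smoothness of the forgetful morphism $\Def_{(U,D)} \to \Def_U$: by the isomorphisms it suffices to show $\Def^G_{(V,\Delta)} \to \Def^G_V$ is smooth, and since $G$-invariants is exact it suffices to show the forgetful morphism $\Def_{(V,\Delta)} \to \Def_V$ is smooth. Given a deformation $\mathcal{V}'$ of $V$ over $A'$ restricting to $\mathcal{V}$ over $A$ and a deformation $\mathbf{\Delta}$ of $\Delta$ inside $\mathcal{V}$, I would lift the Cartier divisor $\mathbf{\Delta} \subset \mathcal{V}$ to a Cartier divisor in $\mathcal{V}'$: the obstruction to lifting lies in $H^1(\Delta, \mathcal{N}_{\Delta/V} \otimes J)$, or more precisely is governed by the surjectivity of $\operatorname{Pic}(\mathcal{V}') \to \operatorname{Pic}(\mathcal{V})$ together with the fact that $\mathcal{O}_{\mathcal{V}'}$-line bundles restricting to $\mathcal{O}_{\mathcal{V}}(\mathbf{\Delta})$ exist because on the Stein local model line bundles are trivial and sections lift (the relevant $H^1$ over the Stein/Artinian base vanishes). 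Concretely, writing $\mathbf{\Delta}$ as the zero locus of a section of a line bundle on $\mathcal{V}$, the line bundle extends to $\mathcal{V}'$ (both are trivial, $U$ being Stein local) and the section lifts since the obstruction in $H^1$ of a coherent sheaf on a Stein space vanishes; this produces the required lift $\mathbf{\Delta}' \subset \mathcal{V}'$.

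The main obstacle I anticipate is the rigorous verification that the index-one-cover construction works in families over an Artinian base — that $\bigoplus_{j=0}^{r-1}\omega^{[j]}_{\mathcal{U}/A}$ is a flat sheaf of $\mathcal{O}_{\mathcal{U}}$-algebras whose formation commutes with base change, so that the cover $\mathcal{V} \to \mathcal{U}$ is again a deformation of $V \to U$ with the correct $G$-action, and that the $\mathbb{Q}$-Cartier divisor $\mathcal{D}$ pulls back to a genuine Cartier divisor $\mathbf{\Delta}$. This is essentially the content of \cite[Theorem 12]{kollarflat} combined with the $\mathrm{S}_3$ property of $\mathcal{O}_U(jK_U)$ on a terminal $3$-fold, exactly as invoked in the proof of Lemma \ref{iotaflatlem}, so I would cite that argument rather than redo it.
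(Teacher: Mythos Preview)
Your proposal is correct and follows essentially the same route as the paper: construct the mutually inverse functors via quotient by $G$ and relative index-one cover (invoking \cite[Theorem 12]{kollarflat} for flatness of $\omega^{[j]}_{\mathcal{U}/A}$, exactly as in Lemma \ref{iotaflatlem}), and then deduce unobstructedness and smoothness of the forgetful map from the fact that $V$ is l.c.i.\ and $\Delta$ is Cartier. The paper compresses the last two points into a single sentence, whereas you spell out the reductivity of $G$ and the Stein vanishing of $H^1$ for lifting the Cartier divisor; these are just the details the paper is eliding, not a different argument.
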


\begin{rem}
	The latter isomorphism $\Def^G_{V} \simeq \Def_U$ is given  in \cite[Proposition 3.1]{Namtop}.
	\end{rem}

\begin{proof}
For a $G$-equivariant deformation of $(V, \Delta)$, we can construct a deformation of $(U,D)$ by taking its quotient by $G$. 
Conversely, given a deformation $(\mathcal{U}, \mathcal{D})$ of $(U,D)$. 
Let $\iota \colon U':= U \setminus \{ p \} 
\hookrightarrow U$ be an open immersion and $\mathcal{U}' \rightarrow \Spec A$ a deformation of $U'$ induced by $\mathcal{U}$.
Let $\omega_{\mathcal{U}/A}^{[i]}:= \iota_* \omega^{\otimes i}_{\mathcal{U}'/A}$. 
This is flat over $A_n$ by \cite[Theorem 12]{kollarflat}.    
 Thus we can construct a $G$-equivariant deformation of $(V,\Delta)$ by 
\[
\pi_{\mathcal{U}} \colon \mathcal{V}:= \Spec_{\mathcal{U}} \oplus_{i=0}^{r-1} \omega_{\mathcal{U}/A}^{[i]} \rightarrow \mathcal{U} 
\] 
and $\mathbf{\Delta}:= \pi_{\mathcal{U}}^*(\mathcal{D}) = \mathcal{D} \times_{\mathcal{U}} \mathcal{V}$, where $\pi_{\mathcal{U}}$ is defined by an isomorphism 
$\varphi_{s_{\mathcal{U}}} \colon \omega^{[r]}_{\mathcal{U}/A} \simeq \mathcal{O}_{\mathcal{U}}$ 
for some nowhere vanishing section $s_{\mathcal{U}} \in H^0(\mathcal{U}, \omega_{\mathcal{U}/A}^{[r]})$. 
Note that $\pi_{\mathcal{U}}$ is independent of the choice of a section $s_{\mathcal{U}}$. 
We can check that these constructions are converse to each other. Thus we obtain the required isomorphisms of 
functors. 

Since $V$ has only l.c.i.\ singularities and $\Delta$ is its Cartier divisor, we see the latter statements. 
Thus we finish the proof of Proposition \ref{Gequivdeffuncprop}. 
\end{proof}

By these local descriptions, we can show the following unobstructedness of a pair of 
a $\mathbb{Q}$-Fano $3$-fold and its pluri-anticanonical element. 

\begin{thm}\label{pairunobs}
Let $X$ be a $\mathbb{Q}$-Fano $3$-fold and $m$ a positive integer. Assume that $|{-} m K_X|$ contains an element $D$. 
Let $\Def_{(X,D)}$ and $\Def_X$ be the deformation functors of the pair $(X,D)$ and $X$ respectively. 

Then the forgetful map $\Def_{(X,D)} \rightarrow \Def_X$ is a smooth morphism of functors. 
In particular, the deformations of the pair $(X,D)$ are unobstructed. 
\end{thm}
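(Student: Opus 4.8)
The plan is to establish smoothness of the forgetful morphism straight from the infinitesimal lifting criterion, reducing it at the end to a single cohomology vanishing on $X$. Recall that a morphism of deformation functors is smooth as soon as, for every small extension $0 \to J \to A' \to A \to 0$ in $\mathcal{A}$ with $\mathfrak{m}_{A'} J = 0$, the map $\Def_{(X,D)}(A') \to \Def_{(X,D)}(A) \times_{\Def_X(A)} \Def_X(A')$ is surjective; and, since any such extension factors into ones with $J \simeq k$, we may assume $J = k$. So it suffices to show: given $(\mathcal{X}_A, \mathcal{D}_A) \in \Def_{(X,D)}(A)$ and a deformation $\mathcal{X}_{A'} \in \Def_X(A')$ restricting to $\mathcal{X}_A$, there is a closed subscheme $\mathcal{D}_{A'} \subset \mathcal{X}_{A'}$, flat over $A'$, restricting to $\mathcal{D}_A$. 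Granting this, the final clause is immediate: $\Def_X$ is smooth by Theorem \ref{unobsqfano}, and the composite of smooth morphisms of functors is smooth.

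I would perform the lifting on the level of the divisorial sheaf $\mathcal{L} := \mathcal{O}_X(-mK_X)$. By Koll\'ar's theorem \cite[Theorem 12]{kollarflat} (already invoked in Lemma \ref{iotaflatlem} and Proposition \ref{Gequivdeffuncprop}, using that $\mathcal{O}_X(iK_X)$ is $S_3$), the relative reflexive powers $\mathcal{L}_A := \omega^{[-m]}_{\mathcal{X}_A/A}$ and $\mathcal{L}_{A'} := \omega^{[-m]}_{\mathcal{X}_{A'}/A'}$ are flat over $A$ and $A'$, commute with base change, and restrict to $\mathcal{L}$ on the central fibre. The crucial point is to identify $\mathcal{D}_A$ with the zero scheme of a section $s_A \in H^0(\mathcal{X}_A, \mathcal{L}_A)$: on the smooth locus $\mathcal{U}_A \subset \mathcal{X}_A$ the divisor $\mathcal{D}_A$ is Cartier, and the line bundles $\mathcal{O}_{\mathcal{U}_A}(\mathcal{D}_A \cap \mathcal{U}_A)$ and $\mathcal{L}_A|_{\mathcal{U}_A}$ both restrict to $\mathcal{O}_U(-mK_U)$, hence coincide since $\Pic \mathcal{U}_A \to \Pic U$ is injective --- a consequence of $H^1(U,\mathcal{O}_U) = 0$, which in turn follows from $H^1(X,\mathcal{O}_X) = 0$ and $R^1\iota_*\mathcal{O}_U = 0$ (cf. Claim \ref{R1O}) --- and one passes to reflexive hulls along $\iota$. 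As $\mathcal{L}_{A'}$ is $A'$-flat and $J = k$, the sequence $0 \to \mathcal{L} \to \mathcal{L}_{A'} \to \mathcal{L}_A \to 0$ is exact, so the obstruction to lifting $s_A$ to a section of $\mathcal{L}_{A'}$ is its image under the connecting map $H^0(\mathcal{X}_A,\mathcal{L}_A) \to H^1(X,\mathcal{L})$. Now $H^1(X, \mathcal{O}_X(-mK_X)) = 0$ by Kawamata--Viehweg vanishing (write $-mK_X = K_X + (-(m+1)K_X)$ with $-(m+1)K_X$ ample $\mathbb{Q}$-Cartier and $(X,0)$ klt), so $s_A$ lifts to some $s_{A'}$, and I take $\mathcal{D}_{A'}$ to be the divisor of $s_{A'}$. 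It restricts to $\mathcal{D}_A$ by construction; its flatness over $A'$ follows from the local criterion, since modulo $\mathfrak{m}_{A'}$ the map $\mathcal{O}_{\mathcal{X}_{A'}} \xrightarrow{s_{A'}} \mathcal{L}_{A'}$ becomes $\mathcal{O}_X \xrightarrow{s} \mathcal{L}$, injective because $X$ is integral and $s \neq 0$, so $\mathrm{Tor}^{A'}_1(\mathcal{O}_{\mathcal{D}_{A'}},k) = 0$.

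I expect the main obstacle to be the identification in the second paragraph, together with the bookkeeping of scheme structures at the points where $D$ meets $\Sing X$, i.e.\ where $-mK_X$ is not Cartier. There one works on the smooth locus, where $\mathcal{D}_A$ is an honest Cartier divisor cut out by a section of the \emph{line bundle} $\mathcal{L}_A|_{\mathcal{U}_A}$, and descends the construction across $\iota$ using the $S_3$/depth-$3$ properties of $\mathcal{O}_X(iK_X)$ exploited throughout this section (as in the proofs of Claim \ref{R1O}, Lemma \ref{iotaflatlem}, and Theorem \ref{unobsqfano}). An equivalent way to organise the argument is purely local-to-global: lift $\mathcal{D}_A$ locally --- automatically on the smooth locus, and near each point of $\Sing X$ by the smoothness of $\Def_{(U,D)} \to \Def_U$ established in Proposition \ref{Gequivdeffuncprop} --- and then glue, the obstruction to gluing again lying in $H^1(X,\mathcal{O}_X(-mK_X)) = 0$.
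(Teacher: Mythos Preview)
Your argument is correct and reaches the same vanishing input, but it is organised differently from the paper's proof. The paper does not pass through the divisorial sheaf $\mathcal{L}_A = \omega^{[-m]}_{\mathcal{X}_A/A}$ at all: instead, it invokes Proposition~\ref{Gequivdeffuncprop} to guarantee that $\mathcal{D}_A$ lifts \emph{locally} in $\tilde{\mathcal{X}}$, and then applies the obstruction theory of \cite[Theorem~6.2(b)]{HartDef}, which places the obstruction to a global lift in $H^1(D,\mathcal{N}_{D/X})$. That group is then killed via the short exact sequence
\[
0 \to \mathcal{O}_X \to \mathcal{O}_X(D) \to \mathcal{N}_{D/X} \to 0
\]
together with $H^1(X,\mathcal{O}_X(D)) = 0$ and $H^2(X,\mathcal{O}_X)=0$. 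Your route trades the normal--sheaf machinery for the identification $\mathcal{I}_{\mathcal{D}_A} \simeq \omega^{[m]}_{\mathcal{X}_A/A}$ (via Koll\'ar's flatness and the $\Pic$--injectivity argument on the smooth locus), after which the obstruction is simply the connecting map into $H^1(X,\mathcal{O}_X(-mK_X))$. This is arguably more self-contained---you avoid quoting \cite{HartDef} and need only the single vanishing $H^1(X,\mathcal{O}_X(-mK_X))=0$---at the cost of the bookkeeping you flag in your last paragraph (checking that $\mathcal{D}_A$ really is the reflexive zero scheme of a section of $\mathcal{L}_A$ across $\Sing X$, and that the lifted zero scheme is flat). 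The paper's approach pushes that local work into Proposition~\ref{Gequivdeffuncprop} and gets a cleaner global step; yours keeps everything on $X$ and is closer in spirit to the classical ``lift the section'' proof for smooth Fano varieties. Your closing ``equivalent way'' is essentially the paper's argument, except that the gluing obstruction there is placed in $H^1(D,\mathcal{N}_{D/X})$ rather than $H^1(X,\mathcal{O}_X(-mK_X))$.
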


\begin{proof}
Set $k:=\mathbb{C}$.  Let $A$ be an Artin local $k$-algebra, $e=( 0 \rightarrow k \rightarrow \tilde{A} \rightarrow A \rightarrow 0)$ a 
small extension  
and $\zeta:=( f \colon (\mathcal{X},\mathcal{D}) \rightarrow {\rm Spec} A)$ a flat deformation of the pair $(X,D)$.  
Assume that we have a lifting $\tilde{\mathcal{X}} \rightarrow \Spec \tilde{A}$ of $f \colon \mathcal{X} \rightarrow \Spec A$. 
It is enough to show that there exists a lifting $\tilde{\mathcal{D}} \subset \tilde{\mathcal{X}}$ of $\mathcal{D} \subset \mathcal{X}$. 
Let $\mathcal{I}_D \subset \mathcal{O}_X$ be the ideal sheaf of $D \subset X$ 
and $\mathcal{N}_{D/X}:= (\mathcal{I}_D/\mathcal{I}_{D}^2)^{\vee}$ be the normal sheaf of $D \subset X$. 
By Proposition \ref{Gequivdeffuncprop}, there exists a local lifting of $\mathcal{D}$ over $\tilde{A}$.  
Thus the condition in \cite[Theorem 6.2(b)]{HartDef} is satisfied and 
we see that an obstruction to the existence of a global lifting $\tilde{D}$ lies in $H^1(D, \mathcal{N}_{D/X})$. 
Hence it is enough to show that 
\[
H^1(D, \mathcal{N}_{D/X})=0.
\]  

Let $U$ be the smooth locus of $X$, $D_U:= D \cap U$ and $\mathcal{N}_{D_U/U}$ the normal sheaf of $D_U \subset U$. 
There is an exact sequence 
\[
0 \rightarrow \mathcal{O}_U \rightarrow \mathcal{O}_U(D_U) \rightarrow \mathcal{N}_{D_U/U} \rightarrow 0.   
\]
By taking the push forward by the open immersion $\iota$, we obtain an exact sequence 
\[
0 \rightarrow \mathcal{O}_X \rightarrow \mathcal{O}_X (D) \rightarrow \mathcal{N}_{D/X} \rightarrow 0 
\] 
since the sheaves $\mathcal{O}_X(D)$ and $\mathcal{N}_{D/X}$ are reflexive and we have 
$R^1 \iota_* \mathcal{O}_U =0$ by $\depth_p X =3$ for all $p \in X \setminus U$. 
This exact sequence induces an exact sequence 
\[
H^1(X, \mathcal{O}_X(D)) \rightarrow H^1(D, \mathcal{N}_{D/X}) \rightarrow H^2(X,\mathcal{O}_X). 
\]
The L.H.S and R.H.S. are zero by the Kodaira vanishing theorem. Hence we have $H^1(D, \mathcal{N}_{D/X})=0$. 
\end{proof}

\subsection{Existence of an essential resolution of a pair}\label{essresol}
We need a suitable resolution of a $3$-fold cDV singularity and its divisor with only Du Val singularities as follows. 

\begin{prop}\label{essresolprop}
Let $Y$ be a $3$-fold with only hypersurface singularities and $D$ a Cartier divisor on $Y$ with only Du Val singularities. 
Assume that a finite group $G$ acts on $Y$ and the action preserves $D$. 

Then there exists a $G$-equivariant resolution of singularities $f \colon \tilde{Y} \rightarrow Y$ of $Y$ with the following properties; 
\begin{enumerate} 
\item[(i)] The strict transform $\tilde{D} \subset \tilde{Y}$ of $D$ is smooth, 
\item[(ii)] We have $K_{\tilde{D}} = f_D^* K_D$, where $f_D \colon \tilde{D} \rightarrow D$ is the morphism induced by $f$. 
\end{enumerate}
\end{prop}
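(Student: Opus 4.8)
The plan is to split the construction into two steps: first produce a $G$-equivariant modification $g\colon Y_1\to Y$, touching $Y$ only over $\Sing D$, whose restriction to the strict transform $D_1$ of $D$ is the minimal resolution of the Du Val surface $D$; and then resolve the remaining singularities of $Y_1$, which, as it will turn out, lie away from $D_1$.

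For the first step I would use the classical structure theory of rational double points: the minimal resolution $\sigma\colon\bar D\to D$ of a Du Val surface is crepant, $K_{\bar D}=\sigma^*K_D$, and is obtained by a finite sequence of blow-ups, at each stage blowing up the (finite, reduced) singular locus of the strict transform of $D$ produced so far. I would mimic this sequence on $Y$: the singular locus of $D$ is a finite set of points lying on $D$ and, since $G$ preserves $D$, it is $G$-invariant, so we may blow up $Y$ along it; as the centre is contained in $D$, the strict transform of $D$ is exactly the blow-up of $D$ along the same set, and we iterate. After finitely many steps this yields a $G$-equivariant projective birational morphism $g\colon Y_1\to Y$, an isomorphism over $Y\setminus\Sing D$, such that the strict transform $D_1\subset Y_1$ of $D$ is smooth and $g|_{D_1}\colon D_1\to D$ is the minimal resolution $\sigma$; in particular $g|_{D_1}$ is crepant.

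Next I would observe that $D_1$ is a Cartier divisor on $Y_1$: in each blow-up the exceptional divisor is Cartier (its ideal sheaf is invertible by construction), the total transform of a Cartier divisor is Cartier, hence so is their difference, which is the strict transform, and Cartierness therefore propagates inductively. Since $D_1$ is now a \emph{smooth} Cartier divisor on $Y_1$, the elementary fact that a Noetherian local ring $A$ is regular whenever $A/(t)$ is regular for some nonzerodivisor $t$ shows that $Y_1$ is smooth along $D_1$; I would pick a $G$-invariant open neighbourhood $W\supset D_1$ on which $Y_1$ is smooth. Finally I would take a $G$-equivariant resolution of singularities $f_1\colon\tilde Y\to Y_1$ that is an isomorphism over the smooth locus of $Y_1$, in particular over $W$ (equivariant resolution exists, as already used above). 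Then $f:=g\circ f_1\colon\tilde Y\to Y$ is a $G$-equivariant resolution, its strict transform $\tilde D$ of $D$ equals $D_1\cong\bar D$, which is smooth, so (i) holds, and $f_D=\sigma$ is the minimal resolution, giving $K_{\tilde D}=f_D^*K_D$, which is (ii).

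The main obstacle is the first step: one has to check that the chosen blow-ups really realize the \emph{minimal} resolution of each Du Val point of $D$ — that blowing up reduced singular loci does not overshoot and force a later contraction — and that the centres remain inside $D$ so that they induce the corresponding blow-ups of $D$. This rests on the explicit ADE classification of rational double points and the classical description of their minimal resolutions by iterated point blow-ups, and it needs some care in the bookkeeping for the $D_n$ and $E_n$ types, where intermediate exceptional configurations are reducible and several singular points are created simultaneously. Everything afterwards — Cartierness of $D_1$, smoothness of $Y_1$ along $D_1$, and the concluding equivariant resolution away from $\tilde D$ — is then routine.
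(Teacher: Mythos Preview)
Your overall architecture matches the paper's proof exactly: blow up $Y$ repeatedly at the singular points of (the successive strict transforms of) $D$ so that the induced map on $D$ is the minimal resolution, observe that once $D_l$ is smooth and Cartier the ambient $Y_l$ is smooth along it, and finish with an equivariant resolution supported away from $D_l$. So the strategy is not in question.

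The gap is in the step you call routine. Your sentence ``the exceptional divisor is Cartier, the total transform of a Cartier divisor is Cartier, hence so is their difference, which is the strict transform'' does not establish that $D_i\subset Y_i$ is Cartier. What is true is that $g^*D$ and $E$ are Cartier on the blow-up, so $g^*D - mE$ is Cartier for any $m$; what is not automatic is that this Cartier divisor \emph{equals} the strict transform. On a singular $Y$ the projectivized tangent cone $E$ can be reducible, and the effective Cartier divisor $g^*D-mE$ may well contain components of $E$; when that happens the strict transform is only a proper Weil component of $g^*D-mE$ and need not be Cartier. This is precisely the content of the paper's Claim: working in local coordinates with $Y=(g(x,y,z)+u\,h(x,y,z,u)=0)\subset\mathbb{C}^4$ and $D=(u=0)$, one must check that $\Delta_1\cap E_1=(u=0)$ and $Y_1\cap E_1=(g^{(2)}+u\,h^{(1)}=0)$ share no component in $\mathbb{P}^3$, equivalently that $u\nmid g^{(2)}$; this uses the Du Val hypothesis ($g^{(2)}\neq 0$) in an essential way and has to be redone at each stage. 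Implicit here is also that the local equation of $D$ can be taken as a coordinate ($h\notin\mathfrak m^2$), which again uses that a Du Val point has multiplicity~$2$ while a hypersurface singular point of $Y$ has multiplicity $\ge 2$.

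So you have the difficulty located in the wrong place: the fact that iterated point blow-ups realise the minimal resolution of ADE surface singularities is classical and the paper takes it for granted, whereas the inductive Cartierness of $D_i$ in $Y_i$ is the part that needs an actual argument.
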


\begin{proof} 
Let \[
f_D \colon D_l \stackrel{f_{D_{l-1}}}{\rightarrow} D_{l-1} \rightarrow \cdots \rightarrow  D_1 \stackrel{f_{D_0}}{\rightarrow} D_0=D
\] be the minimal resolution of $D$, where $f_{D_i} \colon D_{i+1} \rightarrow D_i$ is a blow-up at a Du Val point $p_i \in D_i$ 
for $i=0, \ldots, l-1$. 
Let \[
f_Y \colon Y_l \stackrel{f_{Y_{l-1}}}{\rightarrow} Y_{l-1} \rightarrow \cdots \rightarrow  Y_1 \stackrel{f_{Y_0}}{\rightarrow} Y_0=Y  
\] 
be a composition of the blow-ups at the same smooth points as $f_D$. 
The surface $D_i$ can be regarded as a divisor on $Y_i$.  

\begin{claim} 
The divisor $D_i$ is Cartier on $Y_i$ for $i=1, \ldots, l$. 	
\end{claim}

\begin{proof}[Proof of Claim]
First, note that, if $Y$ is smooth at a Du Val singularity of $D$, we see the claim over that point. 
Thus we assume that $Y$ is singular. 

Since we can check the statements locally around a Du Val singularity of $D$, 
we may assume that $Y$ is embedded in  $Z:=\mathbb{C}^4$ as a Cartier divisor and 
there exists a divisor $\Delta \subset Z$ such that $D = \Delta \cap Y$. 
We may also assume that the defining equation of $Y$ is of the form 
\begin{equation}\label{cDVequationform}
g(x,y,z) + u h(x,y,z,u),  
\end{equation}
and $\Delta= (u=0) \subset Y$, where $g \in \mathbb{C}[x,y,z]$ is a defining equation of the Du Val singularity of $D$ 
and $h \in  \mathbb{C}[x,y,z,u]$ is a polynomial which vanishes on $p_0 \in D_0=D$ since $Y$ is singular at $p_0$. 

Let
\[
f_{\Delta} \colon \Delta_l \stackrel{f_{\Delta_{l-1}}}{\rightarrow} \Delta_{l-1} \rightarrow \cdots \rightarrow  \Delta_1 \stackrel{f_{\Delta_0}}{\rightarrow} \Delta_0=\Delta, 
\]
\[
f_Z \colon Z_l \stackrel{f_{Z_{l-1}}}{\rightarrow} Z_{l-1} \rightarrow \cdots \rightarrow  Z_1 \stackrel{f_{Z_0}}{\rightarrow} Z_0=Z 
\]
be compositions of the blow-ups at the same smooth points as $f_D$. 
Note that $Y_i, \Delta_i \subset Z_i$ and $D_i \subset Y_i \cap \Delta_i$. 
Let $E_i:=f^{-1}_{Z_{i-1}}(p_i) \subset Z_i$ be the exceptional divisor. 

We can check that $D_1 \subset Y_1$ is a Cartier divisor as follows; 
It is enough to check that $D_1 = \Delta_1 \cap Y_1$. 
On $E_1 \simeq \mathbb{P}^3$ with coordinates $(x,y,z,u)$, we see that 
\[
\Delta_1 \cap E_1 =(u=0) \subset E_1,  
\] 
\[
Y_1 \cap E_1= (g^{(2)}(x,y,z) + u h^{(1)}(x,y,z,u) =0)  \subset E_1, 
\] 
where $g^{(2)}(x,y,z)$ is the degree $2$ part of $g$ and $h^{(1)}$ is the degree $1$ part of $h$.     
By this description, we see that $\Delta_1 \cap E_1$ and $Y_1 \cap E_1$ have no common component. 
Thus we see that $D_1 = \Delta_1 \cap Y_1$ and it is a Cartier divisor. 

If $Y_1$ is smooth, we see that $D_2$ is Cartier. 
If $Y_1$ is singular, by the same argument, we see that $Y_2 \cap E_2$ and $\Delta_2 \cap E_2$ have no common component and 
$D_2 = Y_2 \cap \Delta_2$ 
since we can take local equations of $Y_2 \subset Z_2$ and $D_2 \subset Y_2$ as in (\ref{cDVequationform}) at a Du Val point $p_1 \in D_1$.   

We can proceed as this and show the claim for all $i$. 

\end{proof}

We can assume that $f_Y$ and $f_D$ are $G$-equivariant since we can take $G$-invariant centers of the blow-ups for $f_D$.  

Next, we can take a $G$-equivariant resolution $f_2 \colon \tilde{Y} \rightarrow Y_l$ such that 
$f_2$ is isomorphism on $Y_l \setminus \Sing Y_l$. 
Note that $f_2$ induces an isomorphism on $D_l$ since it is a smooth Cartier divisor on $Y_l$ and thus $Y_l$ is smooth around $D_l$. 
We see that the composition $f := f_Y \circ f_2 \colon \tilde{Y} \rightarrow Y$ satisfies the required condition. 
Thus we finish the proof of Proposition \ref{essresolprop}. 
\end{proof}

\subsection{Classification of $3$-fold terminal singularities}\label{terminalsect}
Let $(p \in U )$ be a germ of a $3$-fold terminal singularity. 
By Reid's result \cite{YPG}, $(U,p)$ is locally isomorphic to 
\[
0 \in (f =0) / \mathbb{Z}_r \subset \mathbb{C}^4/\mathbb{Z}_r,  
\]
where $\mathbb{Z}_r$ acts on $\mathbb{C}^4$ diagonally and 
$f \in \mathbb{C}[x,y,z,u]$ and $x,y,z,u$ are $\mathbb{Z}_r$-semi-invariant functions on $\mathbb{C}^4$. 
By the list in \cite{YPG}(6.4),  we have a $\mathbb{Z}_r$-semi-invariant function $h \in \mathbb{C}[x,y,z,u]$ such that 
\[
D_h :=(f= h = 0)/ \mathbb{Z}_r \subset (f=0)/\mathbb{Z}_r =:U_f
\]
 has only a Du Val singularity at the origin and 
$D_h \in |{-}K_{U_f}|$.

\subsection{Some ingredients for the proof}\label{ingsect}

Let $X$ be an algebraic scheme and $D$ its closed subscheme. 
For the functor $\Def_{(X,D)} \colon \mathcal{A} \rightarrow (Sets)$, let  
$T^1_{(X,D)} := \Def_{(X,D)}(A_1)$ be the tangent space.

We use the following fact that deformations of a pair of a variety and its divisor. 

\begin{lem}\label{codim3pair}
Let $X$ be a $3$-fold with only terminal singularities and $D$ a $\mathbb{Q}$-Cartier divisor on $X$. 
Let $Z \subset X$ be a $0$-dimensional subset. Let $\iota \colon U:= X \setminus Z \hookrightarrow X$ be an open immersion.
Set $D_U := D \cap U$.  
 
Then the restriction homomorphism $\iota^* \colon T^1_{(X,D)} \rightarrow T^1_{(U,D_U)}$ is an isomorphism.  
\end{lem}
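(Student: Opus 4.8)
The plan is to reduce the statement to two already-available facts: first, that first‑order deformations of $X$ (resp.\ $U$) are detected on the smooth locus, via the same mechanism used in Proposition \ref{obs} and \cite[\S 1 Theorem 2]{Schl}; and second, that a first‑order deformation of a pair $(X,D)$ consists of a deformation of $X$ together with a compatible deformation of $D$ inside it, so that one can split the argument into the ambient part and the divisorial part. More precisely, I would first observe that $Z$ is finite, hence $\codim_X Z = 3$, and that since $X$ has terminal (in particular Cohen–Macaulay) singularities we have $\depth_p \mathcal{O}_{X,p} = 3$ for all $p \in Z$; likewise $\mathcal{O}_X(D)$ and $\mathcal{O}_D$ have depth $\ge 2$ (indeed $D$ is $\mathrm{S}_2$) along the finitely many points of $Z$ lying on $D$. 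These depth bounds are exactly what is needed to control the local-cohomology obstructions to extending sections and cohomology classes across $Z$.

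The key steps, in order: (1) Show the ambient map $T^1_X \to T^1_U$ is an isomorphism. Injectivity and surjectivity follow because a first-order deformation of $U$ extends uniquely across $Z$: using $R^1\iota_* \mathcal{O}_U = 0$ and $R^2\iota_*\mathcal{O}_U$-type vanishing coming from $\depth \ge 3$ at points of $Z$ (exactly as in Claim \ref{R1O}(i) and the discussion after Proposition \ref{obs}), one reconstructs $\mathcal{O}_{\mathcal{X}_1}$ as $\iota_*\mathcal{O}_{\mathcal{U}_1}$; this is precisely the statement that deformations of $X$ are bijective to deformations of its smooth part. (2) Show that $T^1_{(X,D)} \to T^1_X$ and $T^1_{(U,D_U)} \to T^1_U$ have ``the same'' kernel, i.e.\ the space of first-order deformations of $D$ keeping $X$ fixed, which is $H^0$ of the normal sheaf. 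For $X$ this is $\mathrm{Hom}_{\mathcal O_X}(\mathcal I_D/\mathcal I_D^2,\mathcal O_D) = H^0(D,\mathcal N_{D/X})$ with $\mathcal N_{D/X} := \iota_*\mathcal N_{D_U/U}$ reflexive; restriction to $D_U$ is an isomorphism $H^0(D,\mathcal N_{D/X}) \xrightarrow{\sim} H^0(D_U,\mathcal N_{D_U/U})$ because the complement has codimension $\ge 1$ in $D$ and $\mathcal N_{D/X}$ is $\mathrm{S}_2$ (so sections extend uniquely). (3) Compare the two exact sequences $0 \to H^0(\mathcal N) \to T^1_{(X,D)} \to T^1_X$ and the analogous one for $U$, noting that the image of $T^1_{(X,D)} \to T^1_X$ consists exactly of those ambient deformations for which $D$ also deforms — and since a deformation of $D_U$ in $\mathcal U_1$ again extends across $Z$ by the same $\iota_*$ argument (the obstruction living in an $H^1$ of a sheaf supported suitably, or simply reflexivity of $\mathcal O(D)$), the images correspond under the isomorphism $T^1_X \cong T^1_U$. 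A diagram chase with the five lemma then gives the isomorphism $T^1_{(X,D)} \xrightarrow{\sim} T^1_{(U,D_U)}$.

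The main obstacle I expect is step (2)–(3): making precise that ``deforming $D$ inside a fixed $\mathcal X_1$'' is the same whether computed on $X$ or on $U$, and that this compatibility is functorial enough to splice the two exact sequences. One has to be a little careful that $\mathcal N_{D/X}$ is the right object (reflexive hull of $\mathcal O_X(D)/\mathcal O_X$ restricted to $D$, as in the proof of Theorem \ref{pairunobs}) and that it is $\mathrm{S}_2$ along $Z$, which follows from $D$ being $\mathrm{S}_2$ and $Z$ finite; then the extension-across-$Z$ statements for sections and for the relevant $H^1$'s are all instances of the same local-cohomology vanishing. Once those are in place, the five lemma finishes the argument; no genuinely new input beyond what is already set up for Proposition \ref{obs} and Theorem \ref{pairunobs} is needed.
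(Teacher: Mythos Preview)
Your plan works but differs from the paper's and is vague at the critical step. The paper constructs the inverse $\iota_* \colon T^1_{(U,D_U)} \to T^1_{(X,D)}$ directly: given $(\mathcal{U}_1, \mathcal{I}_{D_{U_1}})$, it sets $\mathcal{O}_{\mathcal{X}_1} := \iota_* \mathcal{O}_{\mathcal{U}_1}$ and $\mathcal{I}_{\mathcal{D}_1} := \iota_* \mathcal{I}_{D_{U_1}}$, verifies that both are $A_1$-flat, and then checks flatness of $\mathcal{O}_{\mathcal{D}_1}$ via \cite[Corollary~A.6]{Sernesi}. The key input is $R^1\iota_*\mathcal{I}_{D_U}=0$, i.e.\ $\depth_p \mathcal{O}_X(-D) \ge 3$ for $p \in Z$; reflexivity alone gives only depth $\ge 2$, so your ``simply reflexivity of $\mathcal{O}(D)$'' is not enough for this route. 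The paper obtains depth $3$ because on a terminal $3$-fold every rank-one reflexive sheaf is locally an eigensummand $\mathcal{O}_X(iK_X)$ of the push-forward from the index-one cover, hence Cohen--Macaulay (the same mechanism as Claim~\ref{omegaR1}).

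Your five-lemma decomposition is a genuine alternative, and step~(3) can indeed be run obstruction-theoretically without the depth-$3$ bound: the obstruction to extending $D$ inside a given $\mathcal{X}_1$ lies in $H^1(D,\mathcal{N}_{D/X})$ once local liftings exist (supplied by Proposition~\ref{Gequivdeffuncprop}), and injectivity of $H^1(D,\mathcal{N}_{D/X}) \to H^1(D_U,\mathcal{N}_{D_U/U})$ follows from $\mathcal{N}_{D/X}$ being $S_2$ on the $S_2$ surface $D$, so that $H^1_{Z\cap D}(D,\mathcal{N}_{D/X})=0$. But as written your step~(3) conflates the two arguments---``the same $\iota_*$ argument'' is the direct push-forward, which needs depth $3$---so you should commit to one and supply the matching depth input. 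The paper's one-step push-forward is shorter and avoids invoking the obstruction formalism; your route has the advantage of making the ambient/divisorial split explicit and of using only the $S_2$ property once the obstruction map is in place.
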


\begin{proof}
We can construct the inverse $\iota_* \colon T^1_{(U,D_U)} \rightarrow T^1_{(X,D)}$ of $\iota^*$ as follows. 
$\xi \in T^1_{(U,D_U)}$ corresponds to a deformation $U_1 \rightarrow \Spec A_1$ and an $A_1$-flat ideal sheaf $\mathcal{I}_{D_{U_1}}$. 
We see that $\mathcal{O}_{X_1} := \iota_* \mathcal{O}_{U_1}$ is a sheaf of $A_1$-flat algebras by a similar argument as in the proof of Proposition \ref{obs}. 
Moreover, we see that $\mathcal{I}_{D_1}:= \iota_* \mathcal{I}_{D_{U_1}}$ is an $A_1$-flat ideal sheaf. 
Indeed there is an exact sequence $0 \rightarrow \mathcal{I}_{D_U} \rightarrow \mathcal{I}_{D_{U_1}} \rightarrow \mathcal{I}_{D_U} \rightarrow 0$ and, 
by taking its push-forward by $\iota$, we obtain an exact sequence
\begin{equation}\label{idealexact}
0 \rightarrow \mathcal{I}_D \rightarrow \mathcal{I}_{D_1} \rightarrow \mathcal{I}_D \rightarrow 0. 
\end{equation} 
The surjectivity in (\ref{idealexact}) follows from $R^1 \iota_* \mathcal{I}_{D_{U}} =0$. 
We can show that $R^1 \iota_* \mathcal{I}_{D_{U}} =0$ similarly as Claim \ref{omegaR1} since 
$\mathcal{I}_D$ can be written locally as an eigenspace of some invertible sheaf with respect to the group action induced by the 
index one cover.   
By the sequence (\ref{idealexact}), we see that $\mathcal{I}_{D_1}$ is flat over $A_1$. 
Consider the diagram
\[
\xymatrix{
\mathcal{I}_{D_1} \otimes_{A_1} (t) \ar[r]^{u_1} \ar[d]^{\alpha_1} & \mathcal{O}_{X_1} \otimes_{A_1} (t) \ar[d]^{\alpha_{2}} \\
\mathcal{I}_{D_1} \ar[r]^{u_2} & \mathcal{O}_{X_1}. 
}
\]
We see that $\alpha_1$ is injective since $\mathcal{I}_{D_1}$ is flat over $A_1$. 
Since $u_2$ is also injective, we see that $u_1$ is injective. 
Since $(t) \simeq \mathbb{C}$, we see that $\mathcal{I}_{D_1} \otimes_{A_1} \mathbb{C} \rightarrow \mathcal{O}_{X_1} \otimes_{A_1} \mathbb{C}$ 
is injective. 
By \cite[Corollary A.6]{Sernesi}, this implies that $\mathcal{O}_{D_1}= {\rm Coker} (u_2)$ is flat over $A_1$. 
Thus $(\mathcal{O}_{X_1}, \mathcal{I}_{D_1})$ defines an element $\iota_*(\eta) \in T^1_{(X,D)}$ and 
this determines $\iota_*$. 
\end{proof}

\vspace{5mm}

Let $p \in U$ be a Stein neighborhood of a $3$-fold terminal singularity $p$ with the Gorenstein index $r$. 
By the classification of $3$-fold terminal singularities, there exists $D \in |{-}K_U|$ with only Du Val singularity at $p$. 
Let $m$ be a positive multiple of $r$ and $\pi_U \colon V \rightarrow U$ the $\mathbb{Z}_m$-cyclic cover of $U$ determined by the isomorphism $\mathcal{O}_U(rK_U) \simeq \mathcal{O}_U$ 
as in Section \ref{useful}.  Set $\Delta:= \pi_U^{-1}(D)$. 
Then $V$ has  terminal Gorenstein singularities at $Q:= \pi^{-1}(p)$ and $\Delta$ has Du Val singularities at $Q$. 
Let $\nu \colon \tilde{V} \rightarrow V$ be the $\mathbb{Z}_m$-equivariant resolution of singularities of $(V, \Delta)$ constructed in Proposition \ref{essresolprop}. 
Let $\tilde{\Delta}:= \nu^{-1}_* (\Delta) \subset \tilde{V}$ be the strict transform of $\Delta$ and $F$ the exceptional divisor of $\nu$.  Then we have the coboundary map 
\begin{equation}\label{tauvdelta}
\tau_{(V,\Delta)} \colon H^1(V', \Omega^2_{V'}(\log \Delta')) \rightarrow H^2_F (\tilde{V}, \Omega^2_{\tilde{V}}(\log \tilde{\Delta})),  
\end{equation}
where $V' := V \setminus Q$ and $\Delta' := V' \cap \Delta$. 
By Lemma \ref{codim3pair}, we see that 
\[
T^1_{(V, \Delta)} \simeq T^1_{(V', \Delta')} \simeq H^1(V', \Theta_{V'}(- \log \Delta')) \simeq H^1( V', \Omega_{V'}^2(\log \Delta') (-K_{V'} - \Delta')).   
\]
 By fixing a $\mathbb{Z}_m$-equivariant isomorphism $\mathcal{O}_{V} \simeq \mathcal{O}_V (-K_V -\Delta)$, we finally obtain an isomorphism 
\[
T^1_{(V, \Delta)} \simeq H^1(V', \Omega^2_{V'}(\log \Delta')).
\] 
This isomorphism is $\mathbb{Z}_m$-equivariant and the $\mathbb{Z}_m$-invariant parts are
\[
T^1_{(U,D)} \simeq H^1(U', \Omega^2_{U'}(\log D')). 
\]
For deformations of $\Delta$, we have the following. 

\begin{lem}
Let $ \iota_{\Delta} \colon \Delta' \hookrightarrow \Delta$ be the open immersion. 
Then the restriction homomorphism $\iota_{\Delta}^* \colon T^1_{\Delta} \rightarrow T^1_{\Delta'}$ is injective. 
\end{lem}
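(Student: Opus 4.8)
The plan is to argue exactly as in the proofs of Proposition~\ref{obs} and Lemma~\ref{codim3pair}, using pushforward along the open immersion: I would show that a first-order deformation of $\Delta$ which becomes trivial after restriction to $\Delta'$ is already trivial on $\Delta$. So let $\xi\in T^1_\Delta$ with $\iota_\Delta^*\xi=0$, and let $\mathcal{O}_{\Delta_1}$ be the corresponding sheaf of flat $A_1$-algebras on $\Delta$, where $A_1=\mathbb{C}[t]/(t^2)$. Flatness over $A_1$ gives a square-zero extension of sheaves $0\to\mathcal{O}_\Delta\to\mathcal{O}_{\Delta_1}\to\mathcal{O}_\Delta\to 0$. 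Setting $\mathcal{O}_{\Delta'_1}:=\mathcal{O}_{\Delta_1}|_{\Delta'}$, the hypothesis $\iota_\Delta^*\xi=0$ says precisely that $\mathcal{O}_{\Delta'_1}\cong\mathcal{O}_{\Delta'}\otimes_{\mathbb{C}}A_1$ as sheaves of $A_1$-algebras, compatibly with reduction modulo $t$.

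The key input is that $\Delta$ has only Du Val, hence normal, singularities, so $\mathcal{O}_\Delta$ is $S_2$; since $Q$ has codimension $2$ in $\Delta$, this gives $\iota_{\Delta*}\mathcal{O}_{\Delta'}\cong\mathcal{O}_\Delta$. Applying $\iota_{\Delta*}$ to the split extension $\mathcal{O}_{\Delta'_1}\cong\mathcal{O}_{\Delta'}\oplus\mathcal{O}_{\Delta'}t$ then yields an isomorphism of sheaves of $A_1$-algebras $\iota_{\Delta*}\mathcal{O}_{\Delta'_1}\cong\mathcal{O}_\Delta\otimes_{\mathbb{C}}A_1$. I would then compare $\mathcal{O}_{\Delta_1}$ with the latter through the adjunction unit $\eta\colon\mathcal{O}_{\Delta_1}\to\iota_{\Delta*}\iota_\Delta^*\mathcal{O}_{\Delta_1}=\iota_{\Delta*}\mathcal{O}_{\Delta'_1}\cong\mathcal{O}_\Delta\otimes_{\mathbb{C}}A_1$: it is a homomorphism of sheaves of $A_1$-algebras, and it gives a morphism from the square-zero extension $0\to\mathcal{O}_\Delta\to\mathcal{O}_{\Delta_1}\to\mathcal{O}_\Delta\to 0$ to the trivial one $0\to\mathcal{O}_\Delta\to\mathcal{O}_\Delta\otimes_{\mathbb{C}}A_1\to\mathcal{O}_\Delta\to 0$ which is the identity on both outer copies of $\mathcal{O}_\Delta$ (on the quotient because $\eta$ reduces modulo $t$ to $\iota_{\Delta*}$ of the identity of $\mathcal{O}_{\Delta'}$, and on the sub-$\mathcal{O}_\Delta$ because $\eta$ is $A_1$-linear). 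By the five lemma $\eta$ is an isomorphism, so $\xi$ represents the trivial deformation and $\xi=0$; hence $\iota_\Delta^*$ is injective.

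The point to be careful about — and the reason one obtains only injectivity here, in contrast with the bijectivity for threefolds in Lemma~\ref{codim3pair} — is that $\depth_Q\mathcal{O}_\Delta=2$ rather than $3$, so $R^1\iota_{\Delta*}\mathcal{O}_{\Delta'}$ need not vanish and a general deformation of $\Delta'$ need not extend across $Q$. The argument sidesteps this by exploiting that the restricted deformation $\mathcal{O}_{\Delta'_1}$ is \emph{trivial}, so that its pushforward is computed term by term and no $R^1\iota_{\Delta*}$ enters; the only property of $\mathcal{O}_\Delta$ actually used is the $S_2$ condition, which is just normality of $\Delta$. (This is the $\depth=2$ analogue of the statement \cite[\S 1 Theorem 2]{Schl} invoked earlier for threefold singularities.)
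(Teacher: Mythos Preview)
Your argument is correct and follows essentially the same route as the paper's proof: both use that $\Delta$ is $S_2$ and $Q$ has codimension $2$ so that the adjunction unit $\mathcal{O}_{\Delta_1}\to(\iota_\Delta)_*(\iota_\Delta)^*\mathcal{O}_{\Delta_1}$ is an isomorphism. The paper states this for an arbitrary $\Delta_1$ (thereby exhibiting $(\iota_\Delta)_*$ as a left inverse to $\iota_\Delta^*$ on the level of deformations), whereas you specialize to $\Delta_1$ in the kernel and spell out the five-lemma step explicitly; the underlying idea is the same.
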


\begin{proof}
For $\Delta_1 \in \Def_{\Delta}(A_1)$, we have $(\iota_{\Delta})_* \iota_{\Delta}^* \mathcal{O}_{\Delta_1} \simeq \mathcal{O}_{\Delta_1}$ since $\Delta$ is $S_2$. 
\end{proof}

We have the following commutative diagram; 
\[
\xymatrix{
T^1_{(V,\Delta)} \ar[r] \ar[d]^{p_{\Delta}} & T^1_{(V', \Delta')} \ar[r] \ar[d]^{p_{\Delta'}} & H^1(V', \Omega^2_{V'}(\log \Delta')) \ar[d]^{P_{\Delta'}} \\
T^1_{\Delta} \ar[r] & T^1_{\Delta'} \ar[r] & H^1(\Delta', \Omega^1_{\Delta'}),  
}
\]
where $P_{\Delta'}$ is induced by the residue homomorphism. 
This implies that the elements of $\Image P_{\Delta'}$ is coming from elements of $T^1_{\Delta}$. 
We also have the following diagram; 
\[
\xymatrix{
H^1(\tilde{\Delta}, \Omega^1_{\tilde{\Delta}}) \ar[r]^{R_{\Delta}} & H^1( \Delta', \Omega^1_{\Delta'}) \\
T^1_{\tilde{\Delta}} \ar[u]^{\simeq} \ar[r] \ar[rd]_{(\nu_{\Delta})_*} & T^1_{\Delta'} \ar[u]^{\simeq} \\
 & T^1_{\Delta} \ar[u]_{\iota_{\Delta}^*} 
 }
\]
The vertical isomorphisms are induced by the isomorphism $\mathcal{O}_{\Delta}(K_{\Delta}) \simeq \mathcal{O}_{\Delta}$ 
since $\nu_{\Delta}^* K_{\Delta} = K_{\tilde{\Delta}}$. 
The homomorphism $(\nu_{\Delta})_*$ is the blow-down morphism by Wahl (\cite{wahl}). 
It is well known that $(\nu_{\Delta})_* =0$ since $\Delta$ has a Du Val singularity (cf. \cite[2.10]{BW}). 
Hence we see that $R_{\Delta} =0$ as well.

We have the following lemma. 

\begin{lem}\label{rest0}
Let $R_{\Delta} \colon H^1(\tilde{\Delta}, \Omega^1_{\tilde{\Delta}}) \rightarrow H^1( \Delta', \Omega^1_{\Delta'})$ 
be the restriction homomorphism as above. 

Then we have $P_{\Delta'} (\Ker \tau_{(V, \Delta)}) \subset \Image R_{\Delta} =0$. 
In particular, if $\eta \in H^1(V', \Omega^2_{V'}(\log \Delta')) \simeq T^1_{(V,\Delta)}$ is a 
smoothing direction, then $\tau_{(V,\Delta)}(\eta) \neq 0$.  
\end{lem}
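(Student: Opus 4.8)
The plan is to extract $\Ker\tau_{(V,\Delta)}$ from the long exact sequence of local cohomology supported on $F$, and then to compare the Poincar\'e residue on $\tilde V$ with the one on $V'$ via a single commutative square; since the vanishing $R_{\Delta}=0$ has already been established just above the lemma, this is all that is needed. First I would observe that $\tau_{(V,\Delta)}$ is, by definition, the connecting homomorphism in the long exact sequence of cohomology with supports in $F=\Exc(\nu)$ applied to the sheaf $\mathcal{G}:=\Omega^2_{\tilde V}(\log\tilde\Delta)$. Because $\nu$ is an isomorphism over $V'$ one has $\tilde V\setminus F\simeq V'$ and $\mathcal{G}|_{V'}\simeq\Omega^2_{V'}(\log\Delta')$, so exactness of
\[
H^1(\tilde V,\mathcal{G})\xrightarrow{r}H^1(V',\Omega^2_{V'}(\log\Delta'))\xrightarrow{\tau_{(V,\Delta)}}H^2_F(\tilde V,\mathcal{G})
\]
gives $\Ker\tau_{(V,\Delta)}=\Image r$, where $r$ denotes restriction to the open subset $V'$.

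Next, since $\tilde\Delta$ is a smooth divisor on the smooth variety $\tilde V$ by Proposition \ref{essresolprop}(i), the Poincar\'e residue gives a short exact sequence $0\to\Omega^2_{\tilde V}\to\Omega^2_{\tilde V}(\log\tilde\Delta)\to\Omega^1_{\tilde\Delta}\to 0$, which restricts over $V'$ to the residue sequence defining $P_{\Delta'}$. As restriction to $V'$ is a morphism of these short exact sequences, the square
\[
\xymatrix{
H^1(\tilde V,\Omega^2_{\tilde V}(\log\tilde\Delta)) \ar[r]^{\mathrm{res}} \ar[d]_{r} & H^1(\tilde\Delta,\Omega^1_{\tilde\Delta}) \ar[d]^{R_{\Delta}} \\
H^1(V',\Omega^2_{V'}(\log\Delta')) \ar[r]^{P_{\Delta'}} & H^1(\Delta',\Omega^1_{\Delta'})
}
\]
commutes, the vertical arrows being the evident restrictions. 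Hence, for $x\in\Ker\tau_{(V,\Delta)}$, choosing $\tilde x$ with $r(\tilde x)=x$ we get $P_{\Delta'}(x)=R_{\Delta}(\mathrm{res}(\tilde x))\in\Image R_{\Delta}$, so $P_{\Delta'}(\Ker\tau_{(V,\Delta)})\subset\Image R_{\Delta}$; and $\Image R_{\Delta}=0$ by the discussion preceding the lemma, where $R_{\Delta}$ is identified with the Wahl blow-down homomorphism $(\nu_{\Delta})_{*}$, which vanishes for the Du Val singularity of $\Delta$ by \cite[2.10]{BW}.

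For the final assertion, suppose $\eta\in H^1(V',\Omega^2_{V'}(\log\Delta'))\simeq T^1_{(V,\Delta)}$ is a smoothing direction and, aiming for a contradiction, that $\tau_{(V,\Delta)}(\eta)=0$. Then $P_{\Delta'}(\eta)=0$ by the above. Using the commutative diagram relating $P_{\Delta'}$, $p_{\Delta'}$ and $p_{\Delta}$, together with the isomorphism $T^1_{\Delta'}\simeq H^1(\Delta',\Omega^1_{\Delta'})$ and the injectivity of $\iota_{\Delta}^{*}\colon T^1_{\Delta}\hookrightarrow T^1_{\Delta'}$ proved just before the lemma, this forces $p_{\Delta}(\eta)=0$ in $T^1_{\Delta}$, i.e.\ $\eta$ induces the trivial first-order deformation of $\Delta$. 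But a smoothing direction must restrict to a nonzero class in $T^1_{\Delta}$ (for a Du Val, hence hypersurface, singularity the standard smoothing has nonzero first-order term), a contradiction; hence $\tau_{(V,\Delta)}(\eta)\neq 0$.

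I expect the only delicate point to be the bookkeeping of the identifications rather than any estimate: one must be sure that $P_{\Delta'}$ genuinely computes the residue of the restricted deformation, that $R_{\Delta}$ agrees with $(\nu_{\Delta})_{*}$ under the trivializations $\omega_{\tilde\Delta}\simeq\mathcal{O}_{\tilde\Delta}$ and $\omega_{\Delta}\simeq\mathcal{O}_{\Delta}$ furnished by $K_{\tilde\Delta}=\nu_{\Delta}^{*}K_{\Delta}$ (Proposition \ref{essresolprop}(ii)), and that the residue sequence on $\tilde V$ restricts term by term to the one on $V'$; granting these, the exactness of the local cohomology sequence and the single diagram chase above complete the proof.
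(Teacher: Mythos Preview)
Your argument is correct and follows essentially the same route as the paper: the paper also identifies $\Ker\tau_{(V,\Delta)}$ with the image of the restriction map $\alpha_{(V,\Delta)}\colon H^1(\tilde V,\Omega^2_{\tilde V}(\log\tilde\Delta))\to H^1(V',\Omega^2_{V'}(\log\Delta'))$ from the local cohomology sequence and then uses the same commutative square (with residue and open-immersion maps transposed) to conclude. The only difference is that you spell out the ``In particular'' assertion about smoothing directions, which the paper's proof leaves implicit; your justification via the injectivity of $\iota_\Delta^*$ and nontriviality of the induced class in $T^1_\Delta$ is fine.
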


\begin{proof}
We have a diagram 
\[
\xymatrix{
H^1(\tilde{V}, \Omega^2_{\tilde{V}}(\log \tilde{\Delta})) \ar[r]^{\alpha_{(V, \Delta)}} \ar[d] & H^1(V', \Omega^1_{V'}(\log \Delta')) \ar[d] \\
H^1(\tilde{\Delta}, \Omega^1_{\tilde{\Delta}}) \ar[r]^{R_{\Delta}} & H^1( \Delta', \Omega^1_{\Delta'}) 
}
\]
The vertical homomorphisms are induced by the residue homomorphisms and the horizontal homomorphisms are induced by open immersions. 
Hence the diagram is commutative. 
Since $\Ker \tau_{(V, \Delta)} = \Image \alpha_{(V, \Delta)}$, we obtain the claim by the diagram. 
\end{proof}

We also need the following Lefschetz type statement. 

\begin{prop}\label{lefprop}
Let $Y$ be a normal projective $3$-fold with only isolated singularities and $\Delta \subset Y$ its ample Cartier divisor 
with only isolated singularities. 
Assume that $H^1(Y, \mathcal{O}_Y) =0$. Let $\nu \colon \tilde{Y} \rightarrow Y$ be a resolution of singularities of the pair $(Y, \Delta)$ which is isomorphism on $Y \setminus (\Sing Y \cup \Sing \Delta)$ such that 
  the strict transform $\tilde{\Delta}$ of $\Delta$ is smooth. Let $r_{\tilde{\Delta}} \colon \Pic \tilde{Y} \rightarrow \Pic \tilde{\Delta}$ be the restriction homomorphism. 

Then $\Ker r_{\tilde{\Delta}} $ is generated by $\nu$-exceptional divisors. 
\end{prop}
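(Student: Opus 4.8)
The plan is to deduce the proposition from the Grothendieck--Lefschetz theorem for Weil divisor class groups by passing through the quotients of $\Pic \tilde{Y}$ and $\Pic \tilde{\Delta}$ by exceptional cycles, so that the only nonformal input is the injectivity of $\Cl Y \to \Cl \Delta$. First I would set up the following. Let $E_1, \dots, E_k \subset \tilde{Y}$ be the $\nu$-exceptional prime divisors and $\gamma_1, \dots, \gamma_s \subset \tilde{\Delta}$ the curves contracted by $\nu_{\tilde{\Delta}} := \nu|_{\tilde{\Delta}} \colon \tilde{\Delta} \to \Delta$. Since $\tilde{Y}$ is smooth, $Y$ and $\Delta$ are normal, and $\Sing Y \cup \Sing \Delta$ is finite, restricting line bundles to the open set where $\nu$ is an isomorphism and using the identifications $\Cl Y = \Pic(Y \setminus \Sing Y)$ and $\Cl \Delta = \Pic(\Delta \setminus \Sing \Delta)$ yields surjections $q_Y \colon \Pic \tilde{Y} \twoheadrightarrow \Cl Y$ and $q_{\Delta} \colon \Pic \tilde{\Delta} \twoheadrightarrow \Cl \Delta$ whose kernels are $\langle [E_1], \dots, [E_k] \rangle$ and $\langle [\gamma_1], \dots, [\gamma_s] \rangle$ respectively (a Cartier divisor on $\tilde{Y}$, resp.\ $\tilde{\Delta}$, with trivial $\nu_*$ is supported on the exceptional locus, hence an integral combination of the $E_i$, resp.\ the $\gamma_j$). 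As each $E_i \cap \tilde{\Delta}$ is either empty or a curve contracted by $\nu_{\tilde{\Delta}}$, hence supported on $\bigcup_j \gamma_j$, we get $r_{\tilde{\Delta}}(\Ker q_Y) \subseteq \Ker q_{\Delta}$, so $r_{\tilde{\Delta}}$ descends to a homomorphism $\bar{r} \colon \Cl Y \to \Cl \Delta$ fitting into a commutative square with vertical maps $q_Y, q_{\Delta}$.

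Granting this, the argument is a diagram chase: if $L \in \Ker r_{\tilde{\Delta}}$, then $\bar{r}(q_Y(L)) = q_{\Delta}(r_{\tilde{\Delta}}(L)) = 0$, and if $\bar{r}$ is injective this forces $q_Y(L) = 0$, i.e.\ $L \in \Ker q_Y = \langle [E_1], \dots, [E_k] \rangle$, which is exactly the assertion. So everything reduces to the injectivity of $\bar{r} \colon \Cl Y \to \Cl \Delta$. This is the Grothendieck--Lefschetz theorem for class groups, which I would take from Ravindra--Srinivas \cite{RSGL}, applied in our setting: the delicate point is that $\Delta$ is a fixed, not a general, member of its linear system, and this is precisely where the hypothesis $H^1(Y, \mathcal{O}_Y) = 0$ is used (it makes the formal-functions comparison underlying the Lefschetz statement applicable, and it forces $\Pic^0$ of the relevant loci to vanish). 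The residual topological ingredient is the injectivity $H^2(Y, \mathbb{Q}) \hookrightarrow H^2(\Delta, \mathbb{Q})$ --- the same Lefschetz-type statement as Theorem \ref{lef} in degree $2$, applied after re-embedding $Y$ by a very ample multiple of $\Delta$ so that $Y \setminus \Delta$ is affine with l.c.i.\ singularities.

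The step I expect to be the main obstacle is exactly this injectivity of $\Cl Y \to \Cl \Delta$: one must control Weil --- not merely Cartier --- divisor classes on the singular varieties $Y$ and $\Delta$, which means tracking local cohomology at the finitely many singular points and verifying that passing to the resolution produces no kernel beyond exceptional cycles, as well as disposing of possible torsion in $H^2(\tilde{Y}, \mathbb{Z})$ (harmless here, since $\Pic^0$ of the regular loci vanishes by $H^1(\mathcal{O}) = 0$ and $\tilde{Y}$ is simply connected in the situations of interest). By contrast, once this injectivity is in hand the reduction above is purely formal, and it is important that Proposition \ref{essresolprop} supplies a resolution in which $\tilde{\Delta}$ is smooth with $K_{\tilde{\Delta}} = f_D^* K_D$: this guarantees that the $\gamma_j$ form genuine Du Val $(-2)$-configurations and hence behave in $\Pic \tilde{\Delta}$ exactly as in the classical smooth case, which is what legitimizes the transition between $\Delta$ and $\tilde{\Delta}$ used above.
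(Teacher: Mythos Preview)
Your reduction to the injectivity of $r_{\Delta} \colon \Cl Y \to \Cl \Delta$ is exactly what the paper does: the commutative square with the pushforwards $\nu_*$, $(\nu_\Delta)_*$ and the observation that $\Ker \nu_*$ is generated by exceptional divisors is the paper's opening step, and your diagram chase matches it. So the framework is correct and identical to the paper's.

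The gap is in the proof of the injectivity of $r_{\Delta}$ itself, which is where all the content lies. You correctly flag the difficulty---$\Delta$ is a \emph{fixed} singular divisor, not a general one, so neither \cite{RSGL} nor the classical Grothendieck--Lefschetz comparison via formal functions applies directly to Weil divisor classes---but you do not actually resolve it: the appeal to $H^2(Y,\mathbb{Q}) \hookrightarrow H^2(\Delta,\mathbb{Q})$ controls at best Cartier classes, and the formal-functions remark remains a heuristic. The paper's argument is different and concrete. One chooses $m \gg 0$ so that $|m\Delta|$ contains a \emph{very general} smooth member $\Delta_m$ disjoint from $\Sing \Delta$; then the Noether--Lefschetz result \cite{RSNL} (not \cite{RSGL}) gives that $\Cl Y \to \Cl \Delta_m$ is an \emph{isomorphism}. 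Now if $A \in \Ker r_{\Delta}$, then $A \cdot \Delta = 0$ as a rational equivalence class of $1$-cycles on $Y$, hence $A \cdot \Delta_m = 0$ as well. Using surjectivity of $\Cl Y \to \Cl \Delta_m$, every class $\Gamma$ on $\Delta_m$ is a restriction $F|_{\Delta_m}$, so $A|_{\Delta_m} \cdot \Gamma = (A \cdot \Delta_m) \cdot F = 0$; thus $A|_{\Delta_m}$ is numerically trivial, hence trivial since $H^1(\Delta_m,\mathcal{O}_{\Delta_m})=0$ (this is where the hypothesis $H^1(Y,\mathcal{O}_Y)=0$ actually enters), and the isomorphism $\Cl Y \simeq \Cl \Delta_m$ forces $A = 0$. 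This trick---transporting the vanishing from the fixed $\Delta$ to a general $\Delta_m$ via intersection numbers on $Y$---is the missing idea in your sketch. (Your closing remark about Du Val $(-2)$-configurations via Proposition~\ref{essresolprop} is not needed here: the proposition assumes only isolated singularities on $\Delta$, and the argument above makes no use of the structure of the exceptional curves on $\tilde{\Delta}$.)
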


\begin{proof}
It is enough to show that 
\[
r_{\Delta} \colon \Cl Y \rightarrow \Cl \Delta
\]
is injective. Indeed we have a commutative diagram 
\[
\xymatrix{
\Cl \tilde{Y} \ar[r]^{r_{\tilde{\Delta}}} \ar[d]_{\nu_*} & \Cl \tilde{\Delta} \ar[d]^{(\nu_{\Delta})_*} \\
\Cl Y \ar[r]^{r_{\Delta}} & \Cl \Delta
}
\] 
and, if $r_{\Delta}$ is injective, can see that  
\[
\Ker r_{\tilde{\Delta}} \subset \Ker (\nu_{\Delta})_* \circ r_{\tilde{\Delta}} = 
\Ker r_{\Delta} \circ \nu_* = \Ker \nu_*  
\]
and $\Ker \nu_*$  is generated by $\nu$-exceptional divisors.

Let $m$ be a sufficiently large integer such that $m \Delta$ is very ample. 
By \cite[Theorem 1]{RSNL}, there exists a very general smooth element $\Delta_m \in | m \Delta|$ which is disjoint with $\Sing \Delta$ and 
\[
r_{\Delta_m} \colon \Cl Y \rightarrow \Cl \Delta_m 
\]
is an isomorphism. 
Take $A \in \Ker r_{\Delta}$. Then we have $A \cdot \Delta = 0$ as a rational equivalence class of a cycle on $Y$. 
Then we have 
\[
A \cdot \Delta_m =0 
\]
as a rational equivalence class on $Y$. 

We show that $A|_{\Delta_m}=0 \in \Cl \Delta_m$ as follows. 
It is enough to show that $A|_{\Delta_m}$ is numerically trivial on $\Delta_m$ since 
$H^1(\Delta_m, \mathcal{O}_{\Delta_m})=0$. 
Let $\Gamma \in \Cl \Delta_m$ be any element. Since $r_{\Delta_m}$ is an isomorphism, 
there exists $F \in \Cl Y $ such that $F|_{\Delta_m} = \Gamma$. 
We have 
\[
A|_{\Delta_m} \cdot \Gamma = (A \cdot \Delta_m)\cdot F =0 
\] 
by the intersection theory. 
Indeed $A \cdot \Delta_m$ is a sum of several curves which are regularly immersed since 
$\Delta_m \cap \Sing Y = \emptyset$. 
Hence $A|_{\Delta_m} = 0 \in \Cl \Delta_m$ and we get $A = 0 \in \Cl Y$ since $\Cl Y \stackrel{\simeq}{\rightarrow} \Cl \Delta_m$. 
Thus we get $r_{\Delta}$ is injective and we finish the proof.    
\end{proof}

\subsection{Proof of Theorem \ref{simulqsmqfanointro}}\label{simulqsmproofsection}

Our strategy of the proof of Theorem \ref{simulqsmqfanointro} is similar to that of \cite[Theorem 1.3]{NamSt}. 
In \cite[Theorem 1.3]{NamSt}, there are two crucial ingredients. 
One is the non-vanishing of the coboundary map of local cohomology group (\cite[Theorem 1.1]{NamSt}). 
And another is the vanishing of a composition of homomorphisms between some cohomology groups (\cite[Proposition 1.2]{NamSt}). 
We modify these propositions to our setting of a pair of a variety and its divisor.

\begin{proof}[Proof of Theorem \ref{simulqsmqfanointro}] 
By Corollary \ref{qsmordqfano}, we can assume that the singularities on $X$ are non ordinary terminal singularities. 
Since the forgetful morphism $\Def_{(X,D)} \rightarrow \Def_X$ is smooth  by Theorem \ref{pairunobs}, 
we see that $D \in |{-}K_X|$ extends sideways in a deformation of $X$.  
We prepare the notations to introduce the diagram (\ref{logdiag}).

Let $m$ be a positive integer such that $-m K_X$ is very ample and  $|{-}mK_X|$ contains a smooth element $D_m$ which satisfies 
$D_m \cap \Sing D   = \emptyset$ and intersects transversely with $D$. 
Let $\pi \colon Y := \Spec \oplus_{i=0}^{m-1} \mathcal{O}_X (i K_X) \rightarrow X$ be the cyclic cover determined by $D_m$. 
Note that $Y$ is terminal Gorenstein. 
Put $\{ p_1,\ldots, p_l \} := \Sing D$. 
Note that $\Sing X \subset \Sing D$ since all the singularities on $X$ are non-Gorenstein. Also note that $G := \Gal(Y/X) \simeq \mathbb{Z}_m$ acts on $Y$ and $\Delta := \pi^{-1}(D)$ is 
$G$-invariant.

Let $U_i$ be a sufficiently small Stein neighborhood of $p_i$ such that $U_i \setminus \{p_i \}$ is smooth and $K_{V_i} =0$, where $V_i := \pi^{-1}(U_i)$. 
Let $\pi_i : V_i \rightarrow U_i$ be the morphism induced by $\pi$. 

By Proposition \ref{essresol}, we can take a $\mathbb{Z}_m$-equivariant resolution 
$\nu \colon \tilde{Y} \rightarrow Y$ of $Y$ such that $\nu|_{\nu^{-1}(Y \setminus \Sing \Delta)}$ is an isomorphism, $\tilde{\Delta} := (\nu^{-1})_* \Delta$ is smooth and  
\[
\nu_{\Delta}^* K_{\Delta} = K_{\tilde{\Delta}}, 
\] 
where $\nu_{\Delta} \colon \tilde{\Delta} \rightarrow \Delta$ is induced by  $\nu$.  
Then we have the following diagram; 
\begin{equation}\label{globaldiag2}
\xymatrix{
      \tilde{Y} \ar[r]^{\tilde{\pi}} \ar[d]^{\nu} & \tilde{X} \ar[d]^{\mu} \\
     Y \ar[r]^{\pi}& X .
     }
 \end{equation}
We also have the following diagram induced by the above diagram;  
\begin{equation}\label{localdiag2}
\xymatrix{
      \tilde{V}_{i} \ar[r]^{\tilde{\pi}_{i}} \ar[d]^{\nu_{i}} & \tilde{U}_i \ar[d]^{\mu_i} \\
      V_{i} \ar[r]^{\pi_{i}} & U_i .
     }
 \end{equation}
Put $F := \Exc (\nu)\subset \tilde{Y}$, $F_i := \Exc(\nu_i)$, $E:= \Exc (\mu)$ and $E_i:= \Exc (\mu_i)$.  
Put $ \tilde{\Delta}_i := (\nu_i^{-1})_* \Delta_i$, where $\Delta_i := \Delta \cap V_i$.

Let $\mathcal{F}^{(0)}$ be the $\mathbb{Z}_m$-invariant part of 
$\tilde{\pi}_* \Omega^2_{\tilde{Y}}(\log \tilde{\Delta})$ and set $\mathcal{F}_i^{(0)}:= \mathcal{F}^{(0)}|_{\tilde{U}_i} $.    
Set $U: = X \setminus \Sing D$. 
Note that $\mathcal{F}^{(0)}|_U \simeq \Omega^2_U(\log D_U)$, where $D_U:= D \cap U$. 

Hence we have the following diagram;  
\begin{equation}\label{logdiag}
\xymatrix{
H^1(U, \Omega^2_U(\log D_U) ) \ar[r]^{\oplus \psi_i} \ar[d]^{\oplus p_{U_i}} & \bigoplus_{i=1}^l H^2_{E_i}(\tilde{X}, \mathcal{F}^{(0)} ) \ar[d]^{\simeq} \ar[r]^{\oplus \beta_i}
 &     H^2(\tilde{X}, \mathcal{F}^{(0)} ) \\
     \bigoplus_{i=1}^l H^1(U'_i, \Omega^2_{U'_i}(\log D'_i)) \ar[r]^{\oplus \phi_i}&  \bigoplus_{i=1}^l H^2_{E_i}( \tilde{U}_i, \mathcal{F}_i^{(0)}),   &  
}
 \end{equation}
 where $U'_i:= U_i \setminus \{ p_i\}$ and  $D'_i := D \cap U'_i$.

We have restriction homomorphisms $\iota^* \colon T^1_{(X,D)} \rightarrow T^1_{(U,D_U)}$ and 
$\iota_i^* \colon T^1_{(U_i,D_i)} \rightarrow T^1_{(U'_i, D'_i)}$, where $\iota \colon U \hookrightarrow X$ and $\iota_i \colon U'_i \hookrightarrow U_i$ 
are open immersions.  
By Lemma \ref{codim3pair} and the arguments around it, 
we see that 
\[
H^1(U, \Omega^2_U(\log D_U) ) \simeq T^1_{(X,D)},  
\]
\[
 H^1(U'_i, \Omega^2_{U'_i}(\log D'_i)) \simeq T^1_{(U_i,D_i)}. 
\]
 By using the diagram (\ref{logdiag}), we want to lift $\eta_i \in H^1(U'_i, \Omega^2_{U'_i}(\log D'_i)) \simeq T^1_{(U_i, D_i)}$ 
 which induces a simultaneous $\mathbb{Q}$-smoothing of $(U_i,D_i)$ to $X$. 
 For that purpose, we consider $\phi_i(\eta_i)$ and lift it to $H^1(U, \Omega_U^2(\log D_U))$.

Note that $\phi_i$ is the $\mathbb{Z}_m$-invariant part of the coboundary map
$
\tau_i \colon H^1(V'_i, \Omega_{V'_i}^2(\log \Delta'_i)) 
\rightarrow H^2_{F_i}(\tilde{V}_i, \Omega^2_{\tilde{V}_i}( \log \tilde{\Delta}_i)).  
$ We see that $\tau_i$ is same as $\tau_{(V_i, \Delta_i)}$ introduced in (\ref{tauvdelta}). 
Thus we can use the results in Section \ref{ingsect}. 
By Lemma \ref{rest0}, we see that 
\begin{equation}\label{logkerim}
P_{\Delta'_i} (\Ker \tau_i) \subset \Image R_{\Delta_i} =0,
\end{equation} where 
$P_{\Delta'_i} \colon H^1(V_i', \Omega^2_{V'_i}(\log \Delta'_i)) \rightarrow H^1(\Delta'_i, \Omega^1_{\Delta'_i})$ and 
$R_{\Delta_i} \colon H^1( \tilde{\Delta}_i, \Omega^1_{\tilde{\Delta}_i}) \rightarrow H^1( \Delta'_i, \Omega^1_{\Delta'_i}) $ are defined as in Section \ref{ingsect}. 

There exists $\eta_i \in T^1_{(U_i,D_{i})}$ which induces a simultaneous $\mathbb{Q}$-smoothing of $(U_i,D_i)$ 
by the description in Section \ref{terminalsect}.   
Note that $\phi_i( \eta_i) \not= 0$ by the relation (\ref{logkerim}). 
To lift $\phi_i(\eta_i)$ to $H^1(U, \Omega_U^2(\log D_U))$, we need the following claim.

\begin{claim}\label{comp0}
$\beta_i \circ \phi_i =0$. 
\end{claim}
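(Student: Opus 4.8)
The plan is to mimic the vanishing argument that appeared in the proof of Theorem \ref{qsmqfano}, where the composite $H^1(U,\Omega^2_U\otimes\omega_U^{-1})\to\bigoplus H^2_{E_i}(\tilde X,\mathcal F^{(0)})\to H^2(\tilde X,\mathcal F^{(0)})$ vanished because $H^2(\tilde X,\mathcal F^{(0)})$ itself sat inside a cohomology group killed by a Guillen--Navarro-Aznar--Puerta--Steenbrink type vanishing theorem. Here the relevant ambient group is $H^2(\tilde X,\mathcal F^{(0)})$ with $\mathcal F^{(0)}$ the $\mathbb Z_m$-invariant part of $\tilde\pi_*\Omega^2_{\tilde Y}(\log\tilde\Delta)$. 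First I would observe that, since $\tilde\pi$ is finite, $H^2(\tilde X,\mathcal F^{(0)})$ is a direct summand of $H^2(\tilde Y,\Omega^2_{\tilde Y}(\log\tilde\Delta))$, so it suffices to show that this last group vanishes. Then I would invoke the logarithmic vanishing theorem (the same reference \cite[Theorem 7.30]{PS} used earlier, in its form for $\Omega^p_{\tilde Y}(\log\tilde\Delta)$ twisted appropriately, or the Akizuki--Nakano--Esnault--Viehweg log vanishing) applied to the snc pair $(\tilde Y,\tilde\Delta)$ with $\tilde\Delta$ the pullback of the ample divisor $\Delta=\pi^*(-K_X)$ on $Y$: we get $H^q(\tilde Y,\Omega^p_{\tilde Y}(\log\tilde\Delta)(-\tilde\Delta))=0$ for $p+q<\dim$, and by the residue/Serre-dual reformulation the relevant $H^2(\tilde Y,\Omega^2_{\tilde Y}(\log\tilde\Delta))$ is forced to vanish once one tracks that $\tilde\Delta$ is (pulled back from) ample and $H^i(Y,\mathcal O_Y)=0$.

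The key steps, in order, are: (1) reduce from $\tilde X$ to $\tilde Y$ using finiteness of $\tilde\pi$ and the splitting of $\tilde\pi_*$ into $\mathbb Z_m$-eigensheaves, so that $H^2(\tilde X,\mathcal F^{(0)})$ is a summand of $H^2(\tilde Y,\Omega^2_{\tilde Y}(\log\tilde\Delta))$; (2) identify $\tilde\Delta$ with the strict transform of $\Delta=\pi^{-1}(D)$, which is $\nu$-nef and big and in fact pulled back from the ample Cartier divisor $\Delta$ on the terminal Gorenstein $Y$, while recording that $Y$ (hence $\tilde Y$) has $H^1(\mathcal O)=0$ coming from $-K_X$ ample on $X$; (3) apply the log Kodaira--Nakano--Steenbrink vanishing to the snc pair $(\tilde Y,\tilde\Delta)$ to conclude $H^2(\tilde Y,\Omega^2_{\tilde Y}(\log\tilde\Delta))=0$; (4) conclude $H^2(\tilde X,\mathcal F^{(0)})=0$, hence $\beta_i=0$ on all of $\bigoplus_i H^2_{E_i}(\tilde X,\mathcal F^{(0)})$, and therefore $\beta_i\circ\phi_i=0$.

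The main obstacle I anticipate is step (3): the divisor $\tilde\Delta$ on $\tilde Y$ is only nef and big (it is the pullback of an ample divisor under a birational modification that is an isomorphism away from $\Sing\Delta$), not ample, so the classical Akizuki--Nakano log vanishing does not apply verbatim; I would need the Kawamata--Viehweg-type strengthening for snc pairs (vanishing of $H^q(\tilde Y,\Omega^p_{\tilde Y}(\log\tilde\Delta)\otimes L)$ for $L$ the pullback of an ample line bundle, in the range $p+q<\dim\tilde Y$), or alternatively a direct argument on $Y$ using that $\Delta$ is ample there and $Y$ has rational (terminal) singularities, transporting the vanishing up to $\tilde Y$ via the Leray spectral sequence for $\nu$ together with the fact that $\nu$ is an isomorphism near a general member. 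A secondary subtlety is checking that the twist by $(-\tilde\Delta)$ or $(K_{\tilde Y})$ hidden in the earlier use of $\Omega^2(\log)(-F)(-\nu^*L')$ is indeed absent here — i.e. that the sheaf occurring in diagram (\ref{logdiag}) is exactly $\tilde\pi_*\Omega^2_{\tilde Y}(\log\tilde\Delta)$ with no extra negative twist — so that the vanishing statement I quote is the one actually needed; this is a bookkeeping check comparing the present setup with Section \ref{useful}.
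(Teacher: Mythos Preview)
Your approach has a genuine gap: the group $H^2(\tilde Y,\Omega^2_{\tilde Y}(\log\tilde\Delta))$ that you try to kill is \emph{not} zero in general, and no Kawamata--Viehweg strengthening will make it so. By log Serre duality this group is dual to $H^1(\tilde Y,\Omega^1_{\tilde Y}(\log\tilde\Delta)(-\tilde\Delta))$, and the paper identifies the latter (using $H^i(\tilde Y,\mathcal O_{\tilde Y})=0$ for $i=1,2$ and $H^1(\tilde\Delta,\mathcal O_{\tilde\Delta})=0$) with $\Ker\bigl(\Pic\tilde Y\to\Pic\tilde\Delta\bigr)\otimes\mathbb C$. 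By the Lefschetz statement of Proposition~\ref{lefprop} this kernel is exactly the $\mathbb C$-span of the $\nu$-exceptional divisor classes, which is nonzero whenever $\nu$ is not an isomorphism. Your secondary subtlety is the point: the earlier vanishing in Theorem~\ref{qsmqfano} worked because the sheaf there was $\Omega^2_{\tilde Y}(\log F)(-F)\otimes\nu^*L'$, and the twist by the (pullback of) ample $L'$ plus the $(-F)$ is precisely what made the GNPS theorem apply. Here that twist is absent (locally $-K_V-\Delta\sim 0$), and its absence is fatal to your step~(3).

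The paper therefore does \emph{not} attempt to show $\beta_i=0$. Instead it dualizes the whole composite: the dual of $\beta_i\circ\phi_i$ is the restriction map
\[
\Phi_i\colon H^1(\tilde Y,\Omega^1_{\tilde Y}(\log\tilde\Delta)(-\tilde\Delta))\longrightarrow H^1(V'_i,\Omega^1_{V'_i}(\log\Delta'_i)(-\Delta'_i)),
\]
and one must show $\Phi_i=0$ (on the $\mathbb Z_m$-invariant part). Via the identification above, the source is the span of the $\nu$-exceptional classes inside $\Pic(\tilde Y)_{\mathbb C}$; since $\nu$ is an isomorphism outside $\Sing\Delta$ and $V'_i$ is contained in that locus, every exceptional class restricts to zero on $V'_i$. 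Thus the composite vanishes even though $\beta_i$ itself does not. The Lefschetz Proposition~\ref{lefprop} is the key input you are missing.
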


\begin{proof}[Proof of Claim]
$\beta_i \circ \phi_i$ is the $\mathbb{Z}_m$-invariant part of a composition of the homomorphisms 
\begin{multline}
H^1(V'_i, \Omega^2_{V'_i}(\log \Delta'_i)) \rightarrow H^2_{F_i}(\tilde{V}_i, 
\Omega^2_{\tilde{V}_i}(\log \tilde{\Delta}_i)) \\ 
\simeq H^2_{F_i}(\tilde{Y}, \Omega^2_{\tilde{Y}}(\log \tilde{\Delta})) 
\rightarrow H^2(\tilde{Y}, \Omega^2_{\tilde{Y}}(\log \tilde{\Delta})). 
\end{multline}
By considering its dual, it is enough to show that the $\mathbb{Z}_m$-invariant part of the homomorphism 
\[
\Phi_i \colon H^1(\tilde{Y}, \Omega^1_{\tilde{Y}}(\log \tilde{\Delta})(-\tilde{\Delta})) 
\rightarrow H^1(V'_i, \Omega^1_{V'_i}(\log \Delta'_i)(-\Delta'_i)) 
\]
is zero. We show that $\Phi_i =0$ in the following. 

For a $\mathbb{Z}$-module $M$, we set $M_{\mathbb{C}}:= M \otimes \mathbb{C}$.  
Let $\mathcal{K}_{(\tilde{Y},\tilde{\Delta})}$ be a sheaf of groups defined by an exact sequence 
\[
1 \rightarrow \mathcal{K}_{(\tilde{Y},\tilde{\Delta})} \rightarrow \mathcal{O}^*_{\tilde{Y}} \rightarrow 
 \mathcal{O}_{\tilde{\Delta}}^* \rightarrow 1. 
\]
We have a commutative diagram with two horizontal exact sequences 
\[
\xymatrix{ 0 \ar[r] & H^1(\tilde{Y}, \Omega^1_{\tilde{Y}}(\log \tilde{\Delta})(- \tilde{\Delta})) \ar[r] & H^1(\tilde{Y}, \Omega^1_{\tilde{Y}}) \ar[r] & 
H^1(\tilde{\Delta}, \Omega^1_{\tilde{\Delta}}) \\ 
0 \ar[r] & H^1(\tilde{Y}, \mathcal{K}_{(\tilde{Y}, \tilde{\Delta})})_{\mathbb{C}} \ar[r] \ar[u]^{\epsilon} & 
H^1(\tilde{Y}, \mathcal{O}_{\tilde{Y}}^*)_{\mathbb{C}} \ar[r] \ar[u]^{\delta_{\tilde{Y}}} & H^1(\tilde{\Delta}, \mathcal{O}_{\tilde{\Delta}}^*)_{\mathbb{C}} 
\ar[u]^{\delta_{\tilde{\Delta}}}, 
}  
\]
where the injectivity follows since  we see that  $H^0(\tilde{\Delta}, \Omega^1_{\tilde{\Delta}}) =0$ and   
that  $H^0(\tilde{Y}, \mathcal{O}_{\tilde{Y}}^*) \rightarrow H^0(\tilde{\Delta},\mathcal{O}_{\tilde{\Delta}}^*)$ is surjective. 
We see that $\delta_{\tilde{Y}}$ is an isomorphism and $\delta_{\tilde{\Delta}}$ is injective since 
we have $H^i(\tilde{Y}, \mathcal{O}_{\tilde{Y}})=0$ for $i=1,2$ and $H^1(\tilde{\Delta}, \mathcal{O}_{\tilde{\Delta}})=0$. 
Hence we see that $\epsilon$ is an isomorphism. 

Set $\mathcal{K}_{(V'_i, \Delta'_i)}:= \mathcal{K}_{(\tilde{Y}, \tilde{\Delta})}|_{V'_i}$. We have a commutative diagram 
\[
\xymatrix{
H^1(\tilde{Y}, \Omega^1_{\tilde{Y}}(\log \tilde{\Delta})(-\tilde{\Delta}))  \ar[d] 
&  H^1(\tilde{Y}, \mathcal{K}_{(\tilde{Y}, \tilde{\Delta})})_{\mathbb{C}} \ar[l]^{\simeq} \ar[d]^{\Phi'_i} \\ 
H^1(V'_i, \Omega^1_{V'_i}(\log \Delta'_i)(-\Delta'_i))  & H^1(V'_i, \mathcal{K}_{(V'_i, \Delta'_i)})_{\mathbb{C}}. \ar[l] 
}
\]
Hence it is enough to show that $\Phi'_i=0$. 
Moreover we have a commutative diagram 
\[
\xymatrix{
H^1(\tilde{Y}, \mathcal{K}_{(\tilde{Y}, \tilde{\Delta})})_{\mathbb{C}} \ar[d] \ar@{^{(}->}[r] & H^1(\tilde{Y}, \mathcal{O}_{\tilde{Y}}^*)_{\mathbb{C}} \ar[d]^{\Phi_i''} \\
H^1(V'_i, \mathcal{K}_{(V'_i, \Delta'_i)})_{\mathbb{C}} \ar@{^{(}->}[r] & H^1(V'_i, \mathcal{O}_{V'_i}^*)_{\mathbb{C}}.
}
\]
Since $\nu$ is an isomorphism outside $\Sing \Delta$, we see that $\Phi_i'' =0$ by Proposition \ref{lefprop}. 
Hence we see that $\Phi'_i =0$ and we finish the proof of Claim \ref{comp0}. 
\end{proof}

\vspace{5mm}

By Claim \ref{comp0}, we have $\beta_i (\phi_i(\eta_i)) =0$. 
Thus there exists $\eta \in H^1(U, \Omega^2_U( \log D'))$ 
such that $\psi_i(\eta) = \phi_i(\eta_i)$ for each $i$. 
Then $P_{\Delta'_i}(p_{U_i} (\eta) - \eta_i)  
\in P_{\Delta'_i}(\Ker \tau_i) \subset 
\Image R_{\Delta_i} =0$ by the relation (\ref{logkerim}). 
Hence we have 

\begin{equation}\label{equaleta}
P_{\Delta'_i}(p_{U_i}(\eta)) = P_{\Delta'_i}(\eta_i) \in H^1(\Delta'_i, \Omega^1_{\Delta'_i}).
\end{equation} 
Note that this element corresponds to an element of $T^1_{\Delta_i}$ which induces a smoothing of $\Delta_i$ 
by the definition of $\eta_i$. 

By Theorem \ref{pairunobs}, there exists a deformation $f \colon (\mathcal{X}, \mathcal{D}) \rightarrow \Delta^1$ of $(X,D)$ induced by $\eta$.  
By the relation (\ref{equaleta}), we see that $f$ induces a smoothing of $\Delta_i$. 
Note that $\Sing V_i \subset \Sing \Delta_i$ and this relation is preserved by deformation 
since $\mathcal{D}_t \in |{-}K_{X_t}|$ contains all non-Gorenstein points of $X_t$, where $X_t := f^{-1}(t)$ for $t \in \Delta^1$. 
We see that a deformation of $V_i$ becomes smooth along a deformation of $\Delta_i$ which is smooth since a deformation of $\Delta_i \subset V_i$ is still a Cartier divisor.  
Thus $f$ is a $\mathbb{Q}$-smoothing and we finish the proof of Theorem \ref{simulqsmqfanointro}.   
\end{proof}

\begin{eg}
We give an example of a $\mathbb{Q}$-Fano 3-fold $X$ such that $|{-}K_X|$ does not contain a Du Val elephant (\cite[4.8.3]{ABR}). 

Let $S_{14} \subset \mathbb{P}(2,2,3,7)$ be the surface defined by a polynomial $w^2= y_1^3 y_2^4 -y_1 z^4$. 
Then $S_{14}$ has an elliptic singularity at $[0:1:0:0]$. 
Let $X_{14} \subset \mathbb{P}(1,2,2,3,7)$ be suitable extension of $S_{14}$ by adding several terms including $x$. 
Then we see that $X_{14}$ is terminal and $|{-}K_X| = \{S_{14} \}$ with non Du-Val singularity. 
This $(X,D)$ admits a simultaneous $\mathbb{Q}$-smoothing since $X$ is a quasismooth well-formed weighted hypersurface.   
\end{eg}

\subsection{Genus bound for primary $\mathbb{Q}$-Fano 3-folds} 

\begin{defn}
Let $X$ be a $\mathbb{Q}$-Fano 3-fold. Let $\tilde{\Cl} X$ be the quotient of  the divisor class group $\Cl X$ by its torsion part. 
$X$ is called {\it primary} if  
\[
\tilde{\Cl} X \simeq \mathbb{Z} \cdot [-K_X ] .
\]
\end{defn}

Takagi \cite{Takdv} proved the following theorem on the genus bound of certain primary $\mathbb{Q}$-Fano 3-folds.

\begin{thm}\label{takagi} $($\cite[Theorem 1.5]{Takdv}$)$
Let $X$ be a primary $\mathbb{Q}$-Fano $3$-fold with only terminal quotient singularities. 
Assume that $X$ is non-Gorenstein and $|{-}K_X|$ contains an element 
with only Du Val singularities. 

Then $h^0(X, -K_X) \le 10$. 
\end{thm}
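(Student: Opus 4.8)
The plan is to deduce this from the simultaneous $\mathbb{Q}$-smoothing of Theorem~\ref{simulqsmqfanointro} (the same argument in fact proves Corollary~\ref{genusboundintro}, of which Theorem~\ref{takagi} is the quotient case, so I work in that generality). First I would apply Theorem~\ref{simulqsmqfanointro} to the pair $(X,D)$ with $D\in|{-}K_X|$ a Du Val elephant, obtaining a proper flat family $(\mathcal{X},\mathcal{D})\to\Delta^1$ with central fibre $(X,D)$ such that, for general $t$, $\mathcal{X}_t$ is a $\mathbb{Q}$-Fano $3$-fold with only terminal quotient singularities and $D_t:=\mathcal{D}_t\in|{-}K_{\mathcal{X}_t}|$ is a Du Val divisor each of whose singular points is analytically $\frac1r(a,r-a)\subset\frac1r(1,a,r-a)$ — i.e.\ an $A_{r-1}$ point of $D_t$ at a non-Gorenstein point of $\mathcal{X}_t$ of index $r$, with $D_t$ smooth elsewhere. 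Since $-K_{\mathcal{X}_t}$ is ample, Kawamata--Viehweg vanishing gives $H^i(\mathcal{X}_t,-K_{\mathcal{X}_t})=0$ for $i>0$, so $h^0(\mathcal{X}_t,-K_{\mathcal{X}_t})=\chi(\mathcal{X}_t,-K_{\mathcal{X}_t})$ is independent of $t$ by flatness; one also checks that $\mathcal{X}_t$ stays primary for general $t$ (the quotient singularities are rigid, so the family is analytically locally trivial near them, and neither the local class groups nor the Picard rank of a resolution changes). Hence it suffices to bound $h^0(X,-K_X)$ under the extra assumption, in force from now on, that $D$ has this special local structure.

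The second step is to pass to the anticanonical elephant. By adjunction $K_D=(K_X+D)|_D\sim 0$, and $D$ has Du Val (hence rational) singularities with $H^1(D,\mathcal{O}_D)=0$, so $D$ is a K3 surface with $A_{r_i-1}$ singularities; let $\sigma\colon\widetilde D\to D$ be its minimal resolution, a smooth K3 surface whose exceptional locus is a disjoint union of chains of $(-2)$-curves $E_{ij}$, one chain of length $r_i-1$ over each non-Gorenstein point $p_i\in X$. From $0\to\mathcal{O}_X\to\mathcal{O}_X(-K_X)\to\mathcal{O}_D(A)\to 0$, where $A:=(-K_X)|_D$ and the first map is multiplication by the section cutting out $D$, together with $H^1(X,\mathcal{O}_X)=0$, we get $h^0(X,-K_X)\le 1+h^0(D,A)=1+h^0(\widetilde D,\widetilde A)$, where $\widetilde A=\lfloor\sigma^{*}A\rfloor$; the relevance of the special local form of the singularities of $D$ is precisely that the fractional coefficients of the numerical pullback $\sigma^{*}A$ along the $E_{ij}$ lie in $[0,1)$, so that $\widetilde A$ is the strict transform of $A$. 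Letting $M$ be the nef moving part of $|\widetilde A|$, one has $h^0(\widetilde D,\widetilde A)=h^0(\widetilde D,M)$, and since $M\le\sigma^{*}A$ with both $M$ and $\sigma^{*}A$ nef, $M^{2}\le(\sigma^{*}A)^{2}=A^{2}=(-K_X)^{3}$; by Riemann--Roch on the K3 surface $\widetilde D$, $h^0(\widetilde D,M)=2+\tfrac12 M^{2}$ unless $M$ is a multiple of an elliptic-pencil class, a configuration that has to be excluded separately. Thus, modulo that exceptional case, $h^0(X,-K_X)\le 3+\tfrac12 M^{2}$ and everything comes down to bounding $M^{2}$.

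I expect this last step to be the real obstacle. The inputs are: (a) a Lefschetz-type theorem for class groups — the analogue of Proposition~\ref{lefprop}, in the spirit of \cite{RSGL} — which, given the primary hypothesis $\tilde{\Cl} X\cong\mathbb{Z}\cdot(-K_X)$, forces the restriction $\Cl X\to\Cl D$ to be injective, and hence, since $\Cl D$ is $\Pic\widetilde D$ modulo the root lattice $\bigoplus_i A_{r_i-1}$ spanned by the $E_{ij}$, bounds the non-Gorenstein points of $X$ and their indices by $\sum_i(r_i-1)\le\rho(\widetilde D)-1\le 19$ and controls the discriminant of $\langle\widetilde A,E_{ij}\rangle$ inside the K3 lattice; (b) the orbifold Riemann--Roch formula for $\chi(\mathcal{O}_X(-K_X))=h^0(X,-K_X)$ on $X$ (the Reid--Alt\i nok--Brown--Reid plurigenus formula \cite{YPG}, \cite{ABR}), whose basket corrections depend only on the types $\frac1{r_i}(1,a_i,r_i-a_i)$; and (c) the K3 inequality $h^0(X,-K_X)\le 3+\tfrac12 M^{2}$, $M^{2}\le(-K_X)^{3}$, of the previous paragraph. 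Reconciling (b) and (c) under the constraints of (a), and disposing of the elliptic-pencil exception, is a lattice-and-Riemann--Roch case analysis that pins $M^{2}$ down and yields $M^{2}\le 14$, hence $h^0(X,-K_X)\le 10$. This is in essence Takagi's argument in \cite{Takdv}; what the reduction in the first paragraph supplies is that one may always arrange the elephant to carry the tractable local structure $\frac1r(a,r-a)\subset\frac1r(1,a,r-a)$ on which the bookkeeping depends.
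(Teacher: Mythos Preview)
The paper does not prove Theorem~\ref{takagi}: it is quoted verbatim from \cite[Theorem~1.5]{Takdv} and used as a black box in the three-line proof of Theorem~\ref{genus}. There is therefore no ``paper's own proof'' to compare your proposal against; the paper's contribution is the reduction \emph{to} Theorem~\ref{takagi} via $\mathbb{Q}$-smoothing, not a proof \emph{of} it.

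This means your first paragraph is misplaced for the statement at hand. Theorem~\ref{takagi} already assumes $X$ has only terminal quotient singularities, so invoking Theorem~\ref{simulqsmqfanointro} is vacuous: quotient singularities are rigid, the family is locally trivial, and nothing is gained. What you have actually written is a sketch of Corollary~\ref{genusboundintro} (equivalently Theorem~\ref{genus}), and for \emph{that} statement your first paragraph agrees with the paper's proof line for line --- deform, use constancy of $h^0$ (the paper cites \cite[Theorem~5.28]{KM} rather than flatness plus vanishing), then apply Takagi --- except that the paper stops at ``apply Takagi'' while you continue into paragraphs two and three.

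Those later paragraphs are an outline of Takagi's own argument, not a proof. You flag the third step as ``the real obstacle'' and ``in essence Takagi's argument in \cite{Takdv}'', and that is accurate: the lattice bookkeeping, the orbifold Riemann--Roch balancing, the elliptic-pencil exclusion, and the final bound $M^2\le 14$ are the substance of \cite{Takdv} and none of it is carried out here. So as a proof of Theorem~\ref{takagi} your proposal is a plausible roadmap with the hard part deferred to the reference it is meant to establish. A couple of smaller points: the inequality $h^0(X,-K_X)\le 1+h^0(D,A)$ is in fact an equality by vanishing of $H^1(X,\mathcal{O}_X(-K_X))$; and the parenthetical justification that $\mathcal{X}_t$ remains primary is too brisk --- constancy of $\tilde{\Cl}$ under a small deformation with only quotient singularities is true but deserves a reference or a sentence.
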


By combining his result and our results, we get the following genus bound. 

\begin{thm}\label{genus}
Let $X$ be a primary $\mathbb{Q}$-Fano $3$-fold. 
Assume that $X$ is non-Gorenstein and $|{-}K_X|$ contains an element 
with only Du Val singularities. 

Then $h^0(X,-K_X) \le 10$. 
\end{thm}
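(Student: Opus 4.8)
\emph{Overall strategy.} The plan is to deduce Theorem~\ref{genus} from Takagi's Theorem~\ref{takagi} by passing, via the simultaneous $\mathbb{Q}$-smoothing of Theorem~\ref{simulqsmqfanointro}, from $X$ to a nearby $\mathbb{Q}$-Fano $3$-fold with only terminal \emph{quotient} singularities. Concretely, apply Theorem~\ref{simulqsmqfanointro} to the pair $(X,D)$ to obtain a proper flat family $f\colon(\mathcal{X},\mathcal{D})\to\Delta^1$ with $(\mathcal{X}_0,\mathcal{D}_0)\simeq(X,D)$ such that, for general $t$, $\mathcal{X}_t$ is a $\mathbb{Q}$-Fano $3$-fold all of whose singularities are cyclic quotient singularities and $\mathcal{D}_t\in|{-}K_{\mathcal{X}_t}|$ has only Du~Val singularities, with $\mathcal{D}_t$ locally of the form $\tfrac1r(a,r-a)\subset\tfrac1r(1,a,r-a)$ near each such point (this last is built into the definition of a simultaneous $\mathbb{Q}$-smoothing). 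It then suffices to verify that $\mathcal{X}_t$ satisfies the hypotheses of Theorem~\ref{takagi} and that $h^0(\mathcal{X}_t,-K_{\mathcal{X}_t})=h^0(X,-K_X)$; the bound $h^0(\mathcal{X}_t,-K_{\mathcal{X}_t})\le 10$ then gives the assertion.

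\emph{Invariance of $h^0$.} Since $\mathcal{X}_t$ has rational (terminal) singularities and $-K_{\mathcal{X}_t}$ is an ample $\mathbb{Q}$-Cartier divisor, Kawamata--Viehweg vanishing gives $H^i(\mathcal{X}_t,\mathcal{O}_{\mathcal{X}_t}(-K_{\mathcal{X}_t}))=0$ for $i>0$, and likewise for $X$; hence $h^0=\chi$ in both cases. By \cite[Theorem 12]{kollarflat}, applied exactly as in Lemma~\ref{iotaflatlem}, the reflexive sheaf $\omega^{[-1]}_{\mathcal{X}/\Delta^1}=\mathcal{O}_{\mathcal{X}}(-K_{\mathcal{X}/\Delta^1})$ is flat over $\Delta^1$ and commutes with base change, so its Euler characteristic on the fibers is locally constant: $\chi(\mathcal{X}_t,\mathcal{O}_{\mathcal{X}_t}(-K_{\mathcal{X}_t}))=\chi(X,\mathcal{O}_X(-K_X))$. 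Therefore $h^0(\mathcal{X}_t,-K_{\mathcal{X}_t})=h^0(X,-K_X)$.

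\emph{Checking Takagi's hypotheses for $\mathcal{X}_t$.} The fiber $\mathcal{X}_t$ has only terminal cyclic quotient singularities by construction, and it is non-Gorenstein: the Gorenstein index is locally constant in a $\mathbb{Q}$-Gorenstein family (the family $\mathcal{X}\to\Delta^1$ is such, being built from the reflexive powers $\omega^{[i]}_{\mathcal{X}/\Delta^1}$), so a point of index $r\ge 2$ of $X=\mathcal{X}_0$ persists, with index $r$, on $\mathcal{X}_t$. It remains to see that $\mathcal{X}_t$ is \emph{primary}. Every cyclic quotient singularity has finite local class group, so $\mathcal{X}_t$ is $\mathbb{Q}$-factorial; combined with $H^1(\mathcal{X}_t,\mathcal{O}_{\mathcal{X}_t})=H^2(\mathcal{X}_t,\mathcal{O}_{\mathcal{X}_t})=0$ and the exponential sequence, this gives $\mathrm{rank}\,\widetilde{\Cl}(\mathcal{X}_t)=\rho(\mathcal{X}_t)=b_2(\mathcal{X}_t)$. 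Likewise $X$ is $\mathbb{Q}$-factorial with $\rho(X)=b_2(X)=1$, because $X$ is primary. Thus it is enough to prove $b_2(\mathcal{X}_t)\le b_2(X)=1$ for general $t$; equivalently, to produce a specialization homomorphism $\widetilde{\Cl}(\mathcal{X}_t)\hookrightarrow\widetilde{\Cl}(X)$ compatible with canonical classes. Once $\mathrm{rank}\,\widetilde{\Cl}(\mathcal{X}_t)=1$, the image of such a map is a nonzero subgroup of $\widetilde{\Cl}(X)=\mathbb{Z}\cdot[-K_X]$ containing the primitive class $[-K_X]$, hence the map is an isomorphism and $[-K_{\mathcal{X}_t}]$ generates $\widetilde{\Cl}(\mathcal{X}_t)$, i.e. $\mathcal{X}_t$ is primary; then Theorem~\ref{takagi} applies to $(\mathcal{X}_t,\mathcal{D}_t)$.

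\emph{Main obstacle.} The delicate point is exactly the last one: controlling $\widetilde{\Cl}(\mathcal{X}_t)$, i.e. showing primariness is preserved. The total space of the $\mathbb{Q}$-smoothing is singular, so one cannot simply invoke Ehresmann's theorem or Noether--Lefschetz for a smooth family to get upper semicontinuity of $b_2$. One natural way around this, in the spirit of Section~\ref{ingsect}, is to route the class-group comparison through the Du~Val elephant: the minimal resolution of $\mathcal{D}_t$ is a K3 surface (since $\omega_{\mathcal{D}_t}\simeq\mathcal{O}_{\mathcal{D}_t}$ by adjunction and $H^1(\mathcal{O}_{\mathcal{D}_t})=0$), and one compares $\widetilde{\Cl}(\mathcal{X}_t)$ with its Picard lattice using the Lefschetz-type statements of Proposition~\ref{lefprop} and \cite{RSGL}, together with the parallel statement for $X$ and the fact that $D=\mathcal{D}_0$ deforms to $\mathcal{D}_t$ within the family. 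Modulo this class-group invariance, Theorem~\ref{genus} is a formal consequence of Theorem~\ref{simulqsmqfanointro}, the invariance of $h^0$ above, and Theorem~\ref{takagi}.
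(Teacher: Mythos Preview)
Your overall approach is exactly the paper's: apply Theorem~\ref{simulqsmqfanointro} to pass to a nearby fiber $\mathcal{X}_t$ with only quotient singularities and a Du~Val elephant, transport $h^0(-K)$, and invoke Theorem~\ref{takagi}. The paper's own proof occupies four lines: it cites \cite[Theorem~5.28]{KM} for $h^0(X,-K_X)=h^0(\mathcal{X}_t,-K_{\mathcal{X}_t})$ and then applies Theorem~\ref{takagi} directly, without any discussion of whether $\mathcal{X}_t$ is primary or non-Gorenstein. Your argument for the invariance of $h^0$ via Kawamata--Viehweg vanishing plus flatness of $\omega^{[-1]}_{\mathcal{X}/\Delta^1}$ is a correct unpacking of what that citation supplies.

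The ``main obstacle'' you flag---preservation of primariness---is therefore not addressed by the paper either; you are being more scrupulous than the author. It is, however, a known fact that the paper is tacitly assuming: since $X$ is primary it is in particular $\mathbb{Q}$-factorial with $\rho(X)=1$, and for flat deformations of $\mathbb{Q}$-factorial $3$-folds with terminal singularities the divisor class group is locally constant over the base and the restriction maps $\Cl(\mathcal{X}/\Delta^1)\to\Cl(\mathcal{X}_t)$ are isomorphisms (see \cite[\S12.1]{kollarmoriflip}, in particular the discussion around 12.1.8--12.1.10). This isomorphism respects canonical classes, so primitivity of $-K$ transfers to $\mathcal{X}_t$, and you do not need the more elaborate route through the K3 elephant. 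Your non-Gorensteinness check is also correct: a $\mathbb{Q}$-smoothing of a terminal point of index $r>1$ produces quotient singularities of the same index $r$. With that one citation inserted, your proposal is complete and matches the paper's argument.
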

 
\begin{proof}
By Theorem \ref{simulqsmqfanointro}, there is a deformation $\mathcal{X} \rightarrow \Delta^1$ of $X$ such that $\mathcal{X}_t$ has only quotient singularities and $|{-} K_{\mathcal{X}_t}|$ contains an element with only Du Val singularities for $t  \neq 0$. 
By Theorem 5.28 of \cite{KM}, we have  $h^0(X, -K_X) = h^0(\mathcal{X}_t, -K_{\mathcal{X}_t})$. 
By Theorem \ref{takagi}, we have 
\[ 
h^0(X, -K_X) = h^0(\mathcal{X}_t, -K_{\mathcal{X}_t}) \le 10. 
\]
\end{proof}

\section*{Acknowledgment}
The author is grateful to his advisor Professor Miles Reid for suggesting him the problem of $\mathbb{Q}$-smoothings,   
warm encouragements and valuable comments. 
He would like to thank Professor Yujiro Kawamata for warm encouragement and valuable comments.  
He thanks Professor Yoshinori Namikawa for answering his questions many times through e-mails, discussions and pointing out 
mistakes of the first draft. 
His comments about Schlessinger's result was also very useful for the proof of the unobstructedness. 
He thanks Professor Edoardo Sernesi for teaching him things around Proposition 2.4.8 \cite{Sernesi} and 
letting him know the paper \cite{FM}.  
He thanks Professor Tatsuhiro Minagawa for useful discussions. 
He thanks Professor Hiromichi Takagi for providing many informations through e-mails. 
He thanks Professor Shunsuke Takagi for answering his questions about local cohomology. 
He thanks Professor Osamu Fujino for checking a manuscript on Section \ref{essresol} carefully. 
He also thanks to Professors Donu Arapura, S\'{a}ndor Kov\'{a}cs and Karl Schwede 
for answering his questions concerning this problem through the webpage 
``MathOverflow''. 
He thanks Professor Yoshinori Gongyo and Doctor Tomoyuki Hisamoto for useful comments. 
He thanks Professor J\'{a}nos Koll\'{a}r for useful comments and letting him know 
about the references \cite{kollarflat}, \cite{kollarmoriflip}. 
He thanks Professor Vasudevan Srinivas for useful comments on Lefschetz injectivity of class groups. 
Finally, he would like to thank the referee for many constructive comments on the manuscript. 

He was partially supported by Warwick Postgraduate Research Scholarship. 
He was partially funded by Korean Government WCU
Grant R33-2008-000-10101-0, Research Institute for Mathematical Sciences and Higher School of Economics.

\end{document}